\def\Max{{\rm Max}}
\def\t.d.{{\rm t.d.}}
\def\Q{{\mathbb{Q}}}
\definecolor{darkgreen}{rgb}{0.03, 0.5, 0.03}
 \def\D{\boldsymbol{\mathcal{D}}}
 \def\Q{\boldsymbol{\mathcal{Q}}}
 \def\Max{\mbox{\rm Max}}
 \def\Spec{\mbox{\rm Spec}}
 \def\F{\boldsymbol{F}}
\newcommand{\G}{\boldsymbol{\mathcal{G}}}
\newcommand{\K}{\boldsymbol{\mathcal{K}}}
\newcommand{\KK}{\boldsymbol{\mathcal{H}}}
\newcommand{\U}{\boldsymbol{\mathcal{U}}}
\newcommand{\Cl} {\mbox{\rm Cl}}
\newcommand{\co} {\boldsymbol c}
\newcommand{\Na} {\mbox{\rm Na}}
 \newcommand{\Kr} {\mbox{\rm Kr}}
\newcommand{\cl} {\mbox{\texttt cl}}
\newcommand{\f}{\boldsymbol{{f}}}
\newcommand{\astf} {\ast{_{\!{_f}}}}
\newcommand{\tV}{{\mbox{\texttt{V}}}}
\newcommand{\tT}{{\mbox{\texttt{T}}}}
\newcommand{\tN}{{{\texttt{N}}}}
 \newcommand{\tS}{{\mbox{\texttt{S}}}}
\begin{document}

\title*{On $v$-domains: a survey}
\author{Marco Fontana \and Muhammad Zafrullah}
\institute{Marco Fontana 
 \at Dipartimento di Matematica,
Universit\`a degli Studi ``Roma Tre'',
00146 Roma, Italy,\\ \email{\texttt{fontana@mat.uniroma3.it}}\\ \\
Muhammad Zafrullah \at 57 Colgate Street, Pocatello, ID 83201, USA, \\\email{\texttt{mzafrullah@usa.net}}}

%
%

\maketitle


\abstract{
An integral domain $D$ is a $v$--domain if, for every finitely generated
nonzero  (fractional)   ideal $F$ of $D$, we have $(FF^{-1})^{-1}=D$. The $v$--domains
generalize Pr\"{u}fer   and Krull  domains and have appeared in the literature with
different names. This  paper   is the result of an effort to put together
information on this useful class of integral domains. In this survey, we 
 present  old, recent and new characterizations of $v$--domains along
with some historical remarks. We  also discuss   the relationship of $v$--domains with their various specializations and generalizations, giving
suitable examples. 
 }

\begin{section}{Preliminaries and Introduction} \label{prel}

 Let $D$ be an integral domain with quotient field $K$. Let $
\boldsymbol{\overline{F}}(D)$ be the set of all nonzero
$D$--submodules of $K$ and let $\boldsymbol{F}(D)$ be the set of
all nonzero fractional ideals of $D$, i.e., $A \in
\boldsymbol{F}(D)$ if $A \in \boldsymbol{ \overline{F}}(D)$ and
there exists an element $0 \ne d \in D$ with $dA \subseteq D$. Let
$\boldsymbol{f}(D)$ be the set of all nonzero finitely generated
$D$--submodules of $K$. Then, obviously $\boldsymbol{f}(D)
\subseteq \boldsymbol{F}(D) \subseteq
\boldsymbol{\overline{F}}(D)$.

Recall that  a \emph{star operation} on $D$ is a map $\ast: \boldsymbol{{F}}
(D) \to \boldsymbol{{F}}(D),\  A \mapsto A^\ast$, such that the
following properties hold for all $0 \ne x \in K$ and all $A,B \in
\boldsymbol{{F}}(D)$:

\begin{itemize}
\item[$(\ast_1)$] \hskip 20pt $D = D^\ast$, \  $(xA)^\ast=xA^\ast$;

\item[$(\ast_2)$] \hskip 20pt  $A \subseteq B$ implies $A^\ast \subseteq B^\ast$;

\item[$(\ast_3)$] \hskip 20pt $A \subseteq A^\ast$ and $A^{\ast \ast}   :=  \left(A^\ast
\right)^\ast=A^\ast$.
\end{itemize}
 (the reader may consult    \cite[Gilmer (1972), Sections
32 and 34]{Gi}   for a quick review of star
operations).

In \cite[Okabe-Matsuda (1994)]{OM2}, the authors   introduced  a useful generalization of the 
notion
of a star operation: a \emph{semistar operation} on $D$ is a map $\star: \boldsymbol{\overline{F}}
(D) \to \boldsymbol{\overline{F}}(D),\  E \mapsto E^\star$, such that the
following properties hold for all $0 \ne x \in K$ and all $E,F \in
\boldsymbol{\overline{F}}(D)$:

\begin{itemize}
\item[$(\star_1)$] \hskip 20pt $(xE)^\star=xE^\star$;

\item[$(\star_2)$] \hskip 20pt  $E \subseteq F$ implies $E^\star \subseteq F^\star$;

\item[$(\star_3)$] \hskip 20pt  $E \subseteq E^\star$ and $E^{\star \star}   :=  \left(E^\star
\right)^\star=E^\star$.
\end{itemize}

Clearly, a semistar operation  $\star$ on $D$, restricted to $\boldsymbol{{F}}(D)$,  determines a star operation if and only if $D = D^\star$.




{ If $\ast$ is a star operation on $D$, then we can consider the map\
$\ast_{\!_f}: \boldsymbol{{F}}(D) \to \boldsymbol{
{F}}(D)$ defined as follows:
\begin{equation*}
A^{\ast_{\!_f}}:=\bigcup \{F^\ast\mid \ F \in \boldsymbol{f}(D)
\mbox{ and } F \subseteq A\} \;\; \;\; \mbox{  for all } A \in
\boldsymbol{{F}}(D)  .
\end{equation*}
\noindent It is easy to see that $\ast_{\!_f}$ is a star
operation on $ D $, called the \emph{star operation of finite
type associated to $\ast$}. Note that $F^\ast=F^{\ast_{\!_f}}$
for all $F \in \boldsymbol{f}(D)$. A  star operation $\ast$ is
called a \emph{star operation of finite type} (or a
\emph{star operation of finite character}) if $
\ast=\ast_{\!_f}$. It is easy to see that
$(\ast_{\!_f}\!)_{\!_f}=\ast_{ \!_f}$ (i.e., $\ast_{\!_f}$ is
of finite type). }

If $\ast_1$ and $\ast_2$ are two  star operations on $D$, we
say that $ \ast_1 \leq \ast_2$ \ if $A^{\ast_1} \subseteq
A^{\ast_2}$ for all $A \in \boldsymbol{{F}}(D)$. This is
equivalent to saying that $ \left(A^{\ast_{1}}\right)^{\ast_{2}}
 =  A^{\ast_2}  =
\left(A^{\ast_{2}}\right)^{\ast_{1}}$ for all $A \in
\boldsymbol{{F}}(D)$. Obviously, for  any   star
operation $\ast$ on $D$, we have $\ast_{\!_f} \leq \ast$, and
if $\ast_1 \leq \ast_2$, then $ (\ast_1)_{{\!_f}} \leq
(\ast_2)_{{\!_f}}$.

Let $I\subseteq D$ be a nonzero ideal of $D$. We say that $I$ is a
 \it $\ast$--ideal \rm of $D$ if
 $I^{\ast}=I $. We call a
 $\ast $--ideal of $D$ a
\it $\ast $--prime  
 ideal  \rm  of $D$ if it is also a prime ideal and we call a
maximal element in the set of all proper  $\ast$--ideals
of $D$ a \it  $\ast$--maximal  ideal  \rm
of $D$.

It is not hard to prove that a  $\ast $--maximal ideal is a
prime ideal and that each proper  $\ast _{_{\!f}}$--ideal
is contained in a  $\ast _{_{\!f}}$--maximal ideal. 

Let $\Delta $ be a set of prime ideals of an integral domain $D$ and set \begin{equation*}
E^{\star
_{\!\Delta }}:=\bigcap \left\{ ED_{Q}\mid Q\in\Delta\right\}\;\;\;\;  \mbox{\rm  for all $E \in
\boldsymbol{\overline{F}}(D)$}.
\end{equation*}
 The  operation $\star_{\!\Delta }$ is a semistar operation on $D$
called the  \it  spectral semistar operation associated to
$\Delta $.  \rm Clearly, it gives rise to a star operation on $D$  if (and only if)  $\bigcap \left\{D_{Q}\mid Q\in\Delta\right\} = D$. \rm  

Given a star operation $\ast$ on $D$, when $\Delta $ coincides with $\text{Max}^{\ast _{_{\!f}}}(D)$,
the (nonempty) set of all  ${\ast _{_{\!f}}}$--maximal ideals of $D$,  the operation $\widetilde{\ast }$ defined as follows:
\begin{equation*}
A^{\widetilde{\ast }}:=\bigcap \left\{ AD_{Q}\mid Q\in \text{Max}^{\ast _{_{\!f}}}(D)\right\}\;\;\;\;  \mbox{\rm  for all $A \in
\boldsymbol{{F}}(D)$}
\end{equation*} 
determines a star operation on $D$, called the
\textit{stable star operation of finite type associated to
$\ast$}. It is not difficult to show that $
\widetilde{\ast }\leq \ast _{_{\!f}}\leq \ast $.

It is easy to see that, {\sl mutatis mutandis}, all the previous notions can be extended to the case of a semistar operation.

\medskip

Let $A, B\in \boldsymbol{{F}}(D)$, set $(A:B) :=\{z\in K\mid zB\subseteq A\}$, $(A:_D B):= (A:B) \cap D$, $A^{-1}  :=
 (D:A)$. 
 As usual, we let  $ v_{D}$ (or just
$v$) denote the \emph{$v$--operation} defined by
$A^{v}:=(D:(D:A))=\left( A^{-1}\right) ^{-1}$ for all $A\in
\boldsymbol{{F}}(D)$.  Moreover, we denote $(v_{D})_{_{\!f}}$ by $t_{D}$
(or just by $t$), the \emph{$t$--operation} on $D$; and we
denote the stable semistar operation of finite type associated to
$v_{D}$ (or, equivalently, to $t_{D}$) by $w_{D}$ (or just by
$w$), i.e., $ w_{D}:=\widetilde{v_{D}}=\widetilde{t_{D}}$.

Clearly
$w_{D}\leq t_{D}\leq v_{D}$. Moreover, from \cite[Gilmer (1972), Theorem
34.1(4)]{Gi}, we immediately deduce that $\ast \leq v_{D}$, and
thus $\widetilde{\ast }\leq w_{D}$ and $\ast_{_{\!f}}\leq
t_{D}$, for each  star operation $\ast$ on $D$.

Integral ideals that are  maximal with respect to being $\ast$--ideals, when $\ast =v$ or $t$ or $w$ are relevant in many situations. However, maximal $v$--ideals are not a common sight. There are integral
domains, such as a nondiscrete rank one valuation domain, that do not have
any maximal $v$--ideal  \cite[Gilmer (1972), Exercise 12, page 431]{Gi}.  Unlike maximal $v$--ideals, the maximal $t$--ideals are 
everywhere, in that every $t$--ideal is contained in at least one maximal 
$t$--ideal, which is always a prime ideal  \cite[Jaffard (1960), Corollaries 1 and 2, pages 30-31]{J:1960} (or, \cite[Malik (1979), Proposition 3.1.2]{Ma}, in the integral domains setting).  Note also that the set of maximal $t$--ideals coincides with  the set of maximal $w$--ideals   \cite[D.D. Anderson-Cook (2000), Theorem 2.16]{ACk}.

We will denote simply by $d_D$ (or just $d$) the \emph{identity star operation} on $D$ and clearly $d_D \leq \ast$, for each  star operation $\ast$ on $D$.   Another important star operation on an integrally closed domain  $D$ is the \emph{$b_D$--operation}  (or just \emph{$b$--operation}) defined as follows:
\begin{equation*}
A^{b_D}
:=\bigcap \left\{ AV\mid V \mbox{ is a valuation overring of } D \right\}\;\;\;\;  \mbox{\rm  for all $A \in
\boldsymbol{{F}}(D)$}.
\end{equation*}

Given a star operation on $D$, for $A\in \boldsymbol{{F}}(D)$, we say that $A$ is  \it
$\ast$--finite  \rm if there exists a $F\in \boldsymbol{f}(D)$
such that $F^{\ast}=A^{\ast}$. (Note that in the  above  
definition, we do not require that $F\subseteq A$.) It is
immediate to see that if $\ast _{1}\leq \ast _{2}$ are star
operations and $A$ is $\ast_{1}$--finite, then $A$ is $\ast_{2}$--finite. In particular, if $A$ is $\ast_{\!_{f}}$--finite, then it is $ \ast$--finite. The converse is
not true in general, and one can prove that $A$ is $\ast_{\!_{f}}$--finite if and only if there exists
 $F\in \boldsymbol{f}(D)$, $F\subseteq A$, such that $F^{\ast
}=A^{\ast}$  \cite[Zafrullah (1989), Theorem 1.1]{Z1989}.

Given a star operation on $D$, for $A\in \boldsymbol{{F}}(D)$, we say that $A$ is \it{$\ast
$--invertible} \rm if $(AA^{-1})^{\ast }=D$. From the
fact that the set of maximal $\widetilde{\ast}$--ideals,  $\text{Max}^{\widetilde{ \ast }}(D)$, coincides with the set of maximal ${\ast
_{\!_{f}}}$--ideals,  $\text{Max}^{\ast
_{\!_{f}}}(D)$, \cite[D.D. Anderson-Cook (2000), Theorem 2.16]{ACk}, it easily follows that a nonzero fractional ideal $A$ is
$\widetilde{\ast }$--invertible if and only if $A$ is $\ast
_{_{\!f}}$--invertible (note that if $ {\ast} $ is a star
operation of finite type, then $(AA^{-1})^{\ast} =D$ if
and only if $ AA^{-1}\not\subseteq Q$ for all $Q\in
\text{Max}^{{\ast} }(D)$). 

An   invertible ideal is a $\ast$--invertible $\ast $--ideal for any star operation $\ast $ and, in
fact, it is easy to establish that, if $\ast _{1}$ and $\ast _{2}$ are two
star operations on an integral domain $D$ with $\ast_1 \leq \ast_2$, then  any $\ast _{1}$--invertible ideal is also $\ast _{2}$--invertible.

A classical result due to Krull \cite[Jaffard (1960), Th\'eor\`eme 8, Ch. I, \S4]{J:1960} shows that for a
star operation $\ast$ of finite type,  $\ast$--invertibility implies $\ast$--finiteness. More precisely, for $A \in \F(D)$, we have  that
 $A$ is ${\ast
_{\!_{f}}}$--invertible if and only if $A$ and $A^{-1}$ are ${\ast
_{\!_{f}}}$--finite (hence, in
particular, $\ast$--finite) and $A$ is $\ast$--invertible   (see \cite[Fontana-Picozza (2005), Proposition 2.6]{FPi} for the semistar operation case).
\medskip

We recall now some notions and properties of monoid theory needed later. A nonempty set   with a binary associative and commutative law of composition ``\ $\cdot$\ ''  is  called a \emph{semigroup}. 
A \emph{monoid} $\KK$ is a semigroup that contains an identity element $\frak{1}$ (i.e., an element such that, 
 for all $x\in \KK$,  $\frak{1}\cdot x=x\cdot \frak{1}=x$). If there is an element $\frak{0}$ in 
$\KK$ such that, for all $x\in \KK$, $\frak{0}\cdot x=x\cdot \frak{0}=\frak{0}$, we say that $\KK$
has a \emph{zero element}. 
Finally if, for all $a,x,y$ in a monoid $\KK$ with $a\neq \frak{0},$ 
$
a\cdot x=a\cdot y$ implies that $x=y$ we say that $\KK$ is  a \emph{cancellative monoid}. In
what follows we shall be working with commutative and cancellative monoids
with or without zero.  Note that,
if $D$ is an integral domain then  $D$  can be considered as a monoid under
multiplication and, more precisely,  $D$ is a  cancellative monoid with zero element $0$.

Given a monoid $\KK$, we can consider the set of invertible elements in $\KK$, denoted by  $\U(\KK)$  (or, by $\KK^{\times}$) and the set $\KK^\bullet := \KK \setminus \{\frak{0}\}$.  Clearly, $\U(\KK)$ is a subgroup of (the monoid)  $\KK^\bullet$ and the monoid $\KK$ is called a \emph{groupoid} if $\U(\KK) = \KK^\bullet$.  A monoid with a unique invertible element is called \emph{reduced}. The monoid $\KK/\U(\KK)$ is reduced. A monoid  shall mean a reduced monoid unless specifically stated. 

Given a monoid $\KK$, we can easily develop a divisibility theory and we can introduce a GCD.    A \emph{GCD--monoid} is a monoid having a uniquely determined GCD for  each
finite  set of elements. In a monoid $\boldsymbol{\mathcal{H}}$ an element,     distinct from the unit element  $\mathfrak{1}$ and the zero element $\frak{0}$, is called \emph{irreducible} (or, \emph{atomic})  if it is divisible only by itself and $\mathfrak{1}$.   A monoid $\KK$ is called \emph{atomic} if every  nonzero noninvertible element  of $\KK$  is a product of finitely many atoms of $\KK$.  A nonzero noninvertible element  $p \in \KK$  with the property that $p\mid a\cdot b$, with $a, b \in \KK$ implies $p \mid a$ or $p \mid b$ is called a \emph{prime} element.  It is easy to see that in a GCD--monoid, irreducible and prime elements coincide.  

Given a monoid $\KK$, we can also
form the monoids of fractions of $\KK$ and, when $\KK$ is cancellative, the groupoid of fractions $\boldsymbol{q}(\KK)$ of $\KK$ in the
same manner, avoiding the zero element $\frak{0}$ in the denominator, as in the constructions of the rings of fractions and
the field of fractions of an integral domain $D$.

\bigskip
\centerline{$\diamond$\quad$\diamond$\quad$\diamond$}
\bigskip

This  survey paper   is the result of an effort to put together
information on the important class of integral
domains called $v$--domains, i.e., integral
 domains in which every finitely generated
nonzero  (fractional)   ideal is $v$--invertible. In the present work, we will use  a ring theoretic approach.
  However, because in multiplicative ideal theory we are mainly interested in the
multiplicative structure of the integral domains, the study of monoids came
into multiplicative ideal theory at an early stage. For instance, as we shall
indicate in the sequel, $v$--domains came out of a study of monoids. During
the second half of the 20th century, essentially due to the work of Griffin
\cite{Gr}, and due to Gilmer's books
\cite[Gilmer (1968)]{Gi} and 
 \cite[Gilmer (1984)]{Gi-84}, multiplicative ideal theory from a ring theoretic point of view
became a hot topic for the
  ring theorists. However, things appear to be
changing. Halter-Koch has put together in \cite[Halter-Koch (1998)]
{H-K}, in the
language of monoids, essentially all that was available  at that time and essentially all that could be translated
to the language of monoids.  On the 
other hand, more
recently,  Matsuda, under the influence
of \cite[Gilmer (1984)]{Gi-84},  is keen on converting 
into the language of
additive monoids and semistar operations all that is available and permits
conversion \cite[Matsuda (2002)]{Ma-02}.

Since translation of results often depends upon the interest, motivation and imagination of the
``translator'', it is a difficult task to indicate what (and in which way) can be translated into the language
of monoids, multiplicative or additive, or to the language of semistar
operations. But, one thing is certain, as we generalize, we gain a larger
playground but, at the same time, we lose the clarity and simplicity that we
had become so accustomed to.

With these remarks in mind, we indicate below some of
the results
that may or may not carry over to the monoid treatment, and we outline some general problems that can arise when looking for generalizations, without presuming to be exhaustive.
The first and foremost is any result to do with polynomial ring extensions
may not carry over to the language of monoids even though some of the
concepts translated to monoids do get used in the study of semigroup rings.
The other trouble-spot is the results on integral domains that use the identity ($d$--)operation. As soon as one considers the multiplicative monoid of an integral
domain, with or without zero, some things get lost. For instance,  the
multiplicative monoid $R\backslash \{0\}$ of a PID $R$, with more than one
maximal ideal, is no longer a principal ideal monoid, because a monoid has
only one maximal ideal, which in this case is not principal. All you can
recover is that $R\backslash \{0\}$ is a unique factorization monoid;
similarly, from a B\'ezout domain you can recover a GCD-monoid. Similar
comments can be made for Dedeking and Pr\"ufer domains. On the other hand, if
the $v$--operation is involved then nearly every result, other than the ones
involving polynomial ring extensions, can be translated to the language of
monoids. So, a majority of old ring theoretic results on $v$--domains and their
specializations can be found in \cite[Halter-Koch (1998)]{H-K} and some in \cite[Matsuda (2002)]{Ma-02}, in one form or another. We will mention or we will provide   precise references  only for
those results on monoids that caught our fancy for one reason or another, as indicated in the sequel.

The case of semistar operations and the possibility of generalizing results on $v$--domains, and their specializations,   in this setting is somewhat difficult in that the area of
research has only recently opened up \cite[Okabe-Matsuda (1994)]
 {OM2}.  Moreover, a number of
results involving semistar invertibility are now available, showing a more complex situation for the invertibility in the semistar operation setting see for instance \cite[Picozza (2005)]{Pi-05},  \cite[Fontana-Picozza (2005)]{FPi} and \cite[Picozza (2008)]
{Pi}.
 However, in studying semistar operations, in connection with $v$--domains,  
we often gain deeper insight, as recent work  indicates, see
\cite[D.F. Anderson-Fontana-Zafrullah (2008)]{AFZ}, 
\cite[Anderson-Anderson-Fontana-Zafrullah (2008)]
{AAFZ}.

\end{section} 

\medskip

\begin{section}{When and in what context did the $v$-domains \!show\! up?}

 
 \noindent \bf 2.a The genesis. \rm The \emph{$v$--domains} are precisely the integral domains $D$  for which the $v$--operation is an ``endlich arithmetisch brauchbar''  operation,  cf.  \cite[Gilmer (1968), page 391]{G68}.  
 Recall that a star operation $\ast$ on an integral domain $D$  is  \emph{endlich arithmetisch brauchbar}  (for short, \emph{e.a.b.}) (respectively,  \emph{arithmetisch brauchbar} (for short, \emph{a.b.})) if for all $F, G,H \in \boldsymbol{f}(D)$  (respectively, $F \in \boldsymbol{f}(D)$ and $  G,H \in \boldsymbol{F}(D)$)
$(FG)^{\ast}
             \subseteq (FH)^{\ast}$ implies that $G^{\ast}\subseteq H^{\ast} $.

   In \cite[Krull (1936)]{Krull:1936}, the author only considered the concept of 
 ``a.b. $\ast$--operation''  (more precisely,   Krull's original notation was  ``\ $^{\prime}$--Operation'', instead of  ``$\ast$--operation''). He did not  consider  the (weaker) concept of    ``e.a.b. $\ast$--operation''.  
 
The e.a.b. concept stems from the original version of
Gilmer's book \cite[Gilmer (1968)]{G68}. The results of Section 26 in \cite[Gilmer (1968)]{G68}  show that this 
(presumably) weaker
concept is all that one needs to develop a complete theory of Kronecker
function rings. 
       Robert Gilmer  explained to us   saying   that  \
{\small $\ll$}~I believe I was influenced to recognize this because
during the 1966 calendar year in our graduate algebra seminar (Bill
Heinzer, Jimmy Arnold, and Jim Brewer, among others, were in that
seminar) we had covered Bourbaki's Chapitres 5 and 7 of 
\it Alg\`ebre Commutative\rm , and the development in Chapter 7 on the $v$--operation indicated
that e.a.b. would be sufficient.~{\small{$\gg$}}

Apparently there are no examples in the literature of star operations which are e.a.b. but  not  a.b..  A forthcoming paper  \cite[Fontana-Loper-Matsuda (2010)]{FLoM} (see also \cite[Fontana-Loper (2009)]{FLo}) will contain  an explicit example to show that Krull's a.b. condition is really stronger than the Gilmer's e.a.b. condition.

\vskip 10pt

We asked Robert Gilmer and Joe Mott
about the origins of $v$--domains. They had the following to say: \
{\small $\ll$} We believe that Pr\"ufer's 
paper \cite[Pr\"ufer (1932)]{Prufer-32} is the first to discuss the concept in complete generality,  though we still do not know
who came up with the name of ``$v$--domain". {\small $\gg$}

However, the basic notion of $v$--ideal   appeared  around 1929. 
More precisely, the notion of quasi-equality of ideals (where, for $A, B \in \F(D)$, $A$ is \it quasi-equal \rm to $B$, 
if $A^{-1} = B^{-1}$), special cases
of $v$--ideals
and the observation that the classes of quasi-equal ideals of a   Noetherian   integrally closed domain form a group first appeared in \cite[van der Waerden (1929)]{vdW1} (cf. also \cite[Krull (1935), page 121]{Krull-35}), but this material was put into a more polished form by E. Artin and in this form was published for the first time  by Bartel Leendert van der Waerden in ``Modern Algebra''  \cite[van der Waerden (1931)]{vdW0}.   
 This book originated from notes taken by the author from E. Artin's lectures and it includes research of E. Noether and her students.  
   Note that the ``$v$'' of a $v$--ideal (or a $v$--operation) comes from the German ``Vielfachenideale'' or ``$V$--Ideale'' (``ideal of multiples"), terminology used in \cite[Pr\"ufer (1932), \S7]{Prufer-32}.  It is important to recall also the papers   \cite[Arnold (1929)]{Arnold-29} and  \cite[Lorenzen (1939)]{Lo-39} that introduce the study of $v$--ideals    and $t$--ideals   in semigroups.

 The paper \cite[Dieudonn\'{e} (1941)]{Di-41} provides a
clue to where $v$--domains came out as a separate class of rings, though they
were not called $v$--domains there. Note that \cite[Dieudonn\'{e} (1941)]{Di-41} has been
cited  in  \cite[Jaffard (1960), page 23]{J:1960} and, later,  in   \cite[Halter-Koch (1998), page 216]{H-K}, where it is mentioned
that J. Dieudonn\'{e} gives an example of a $v$--domain that is not a \emph{Pr\"ufer $v$--multiplication domain} (for short, \emph{P$v$MD},  i.e., an integral domain $D$ in which every $F\in \f(D)$ is $t$--invertible).

\bigskip



\noindent \bf 2.b Pr\"ufer domains and $v$--domains. \rm The $v$--domains generalize the \emph{Pr\"ufer domains} (i.e., the integral domains $D$ such that $D_M$ is a valuation domain for all $M \in \Max(D)$), since an integral domain $D$ is a
Pr\"ufer domain if and only if eve\-ry $F\in \f(D)$ is invertible \cite[Gilmer (1972), Theorem 22.1]{Gi}.  Clearly, an invertible ideal is $\ast$--invertible for all star operations $\ast$. In particular,  a Pr\"ufer domain is a \it Pr\"ufer $\ast$--multiplication domain \rm (for short, \emph{P$\ast$MD}, i.e., an integral domain $D$ such that,  for each $F \in \f(D)$,  $F$ is $\astf$--invertible \cite[Houston-Malik-Mott (1984), page 48]{HMM}). 
It is clear from the definitions that a P$\ast$MD is a P$v$MD  (since $\ast \leq v$ for all star operations $\ast$, cf. \cite[Gilmer (1972), Theorem 34.1]{Gi}) and a P$v$MD is a $v$--domain.

The picture can be refined.
M. Griffin,   a student of Ribenboim's,   showed that $D$ is a P$v$MD if and only if  $
D_{M}$ is a valuation domain for each maximal $t$--ideal $M$ of $D$ \cite[Griffin (1967), Theorem 5]{Gr}.   A generalization of this result is given in \cite[Houston-Malik-Mott (1984), Theorem 1.1]{HMM} by showing that $D$ is a P$\ast$MD if and only if  $
D_{Q}$ is a valuation domain for each maximal $\astf$--ideal $Q$ of $D$.   

Call 
a \emph{valuation overring $V$} of \emph{$D$ essential} if $V=D_{P}$ \ for some
prime ideal $P$ of $D$ (which is invariably the center of $V$ over $D$) and
call  $D$ an \textit{essential domain} if $D$ is expressible as an intersection of
its essential valuation overrings. Clearly,  a Pr\"ufer domain is essential
and so it  is a P$\ast$MD and, in particular, a P$v$MD (because, in the first case,  $D=\bigcap D_{Q}$ where $Q$ varies over maximal $\astf$--ideals of $D$ and $D_Q$ is a valuation domain;  in the second case, $D=\bigcap D_{M}$ where $M$ varies over maximal $t$--ideals of $D$ and $D_M$ is a valuation domain; see  \cite[Griffin (1967), Proposition 4]{Gr} and \cite[Kang (1989), Proposition 2.9]{Kang}).  

From a local point of view,   it is easy to see from the definitions that every integral
domain $D$ that is locally essential is essential.  The converse is not true and the first example of an essential domain having a prime ideal $P$ such that $D_P$ is not essential was given in  \cite[Heinzer (1981)]{He}.

Now add to this
information the following well known result \cite[Kang (1989), Lemma 3.1]{Kang-89} that shows that the essential domains sit in between   P$v$MD's   and $v$--domains.

\begin{proposition}
\label{ess-vdomain} An essential domain is a $v$-domain.
\end{proposition}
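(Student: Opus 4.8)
The plan is to produce a star operation $\star$ on $D$ under which every member of $\f(D)$ is invertible, and then to transfer this invertibility to the $v$--operation via the inequality $\star\le v_D$ recalled above. Concretely, write $D=\bigcap\{D_P\mid P\in\Delta\}$, where $\Delta$ is a set of prime ideals of $D$ with each $D_P$ a valuation overring (this is exactly what ``essential'' means). Since $\bigcap\{D_P\mid P\in\Delta\}=D$, the spectral semistar operation $\star_\Delta$, $E\mapsto E^{\star_\Delta}:=\bigcap\{ED_P\mid P\in\Delta\}$, restricts to a \emph{star} operation on $D$, as noted in the Preliminaries; call it $\star$.

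Next I would fix $F\in\f(D)$ and show $(FF^{-1})^{\star}=D$. For each $P\in\Delta$ the extended ideal $FD_P$ is a nonzero finitely generated ideal of the valuation domain $D_P$, hence principal, hence invertible, so $FD_P\,(D_P:FD_P)=D_P$. Because $F$ is finitely generated, localization commutes with the quotient, i.e. $F^{-1}D_P=(D:F)D_P=(D_P:FD_P)$; consequently $FF^{-1}D_P=D_P$ for every $P\in\Delta$. Intersecting over $\Delta$ gives $(FF^{-1})^{\star}=\bigcap\{FF^{-1}D_P\mid P\in\Delta\}=\bigcap\{D_P\mid P\in\Delta\}=D$, so $F$ is $\star$--invertible.

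Finally, since $\star$ is a star operation, $\star\le v_D$ by \cite[Gilmer (1972), Theorem 34.1]{Gi} (as recalled above), whence $D=(FF^{-1})^{\star}\subseteq(FF^{-1})^{v}\subseteq D$, that is, $(FF^{-1})^{v}=D$. As $F\in\f(D)$ was arbitrary, $D$ is a $v$--domain.

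\textbf{Main obstacle.} Everything here is formal once one grants the one genuinely substantive fact used, namely that for a \emph{finitely generated} ideal $F=(x_1,\dots,x_n)$ one has $(D:F)D_P=(D_P:FD_P)$. The inclusion $\subseteq$ is immediate; for the reverse, write $(D:F)=\bigcap_{i=1}^{n}(D:x_i)$ and use that a finite intersection of fractional ideals commutes with the flat base change $D\to D_P$, so $(D:F)D_P=\bigcap_{i=1}^{n}(D:x_i)D_P=\bigcap_{i=1}^{n}(D_P:x_iD_P)=(D_P:FD_P)$. (This is precisely where finite generation of $F$ is essential; it also explains why the argument controls $v$--invertibility of elements of $\f(D)$ but would say nothing about larger ideals.)
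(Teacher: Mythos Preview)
Your proof is correct and follows essentially the same approach as the paper: both use the spectral star operation $\star_\Delta$ associated to the essential representation, verify $(FF^{-1})D_P=D_P$ for each $P\in\Delta$ (using that $F$ is finitely generated so that $F^{-1}D_P=(FD_P)^{-1}$, and that $FD_P$ is invertible in the valuation domain $D_P$), and conclude via $\star_\Delta\le v$. The paper merely writes the chain of equalities and annotates the key step ``because $F$ is f.g.'', whereas you spell out that this is where flatness and finite generation enter; your ``main obstacle'' paragraph is exactly the justification the paper leaves implicit.
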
 
\begin{proof}
Let $\Delta$ be a subset of $\Spec(D)$ such that $D$ = $\bigcap \{D_{P} \mid P \in \Delta \}$, where each $D_{P} $ is a valuation domain with center $P \in \Delta$,  let $F$ be a
nonzero finitely generated ideal of $D,$ and let $\ast_{\!\Delta} $ be the star
operation induced by the family of (flat) overrings  $\{D_{P} \mid P \in \Delta\}$ on $D$. Then 
$$
\begin{array}{rl}
(FF^{-1})^{\ast_{\!\Delta}}= &   \bigcap \{ (FF^{-1})D_{P } \mid P \in \Delta \}
=    \bigcap \{ FD_{P }F^{-1}D_{P } \mid P \in \Delta \} \\
= &    \bigcap \{ FD_{P }(F D_{P })^{-1} \mid P \in \Delta \} \quad \mbox{(because $F$ is f.g.)} \\   
 = &  \bigcap \{ D_{P }  \mid P \in \Delta \} \quad \mbox{(because $D_P$ is  a valuation domain).}
 \end{array}
 $$
 Therefore  $
(FF^{-1})^{\ast_{\!\Delta}}=D$  and so $(FF^{-1})^{v}=D$ (since $\ast_{\!\Delta} \leq v$ \cite[Gilmer (1972), Theorem 34.1]{Gi}).
\end{proof}

For an alternate   implicit proof of Proposition \ref{ess-vdomain}, and much more,
the reader may consult \cite[Zafrullah (1987), Theorem 3.1 and Corollary 3.2]{Z}. 

\begin{remark} \label{intersection-princ}
  (a)   Note   that Proposition \ref{ess-vdomain} follows also from a general result for essential monoids \cite[Halter-Koch (1998), Exercise 21.6 (i), page
244]{H-K},  but the result  as stated above (for essential
domains) was already known  for instance as an application of \cite[Zafrullah (1988), Lemma 4.5]{Z2}.   

If we closely look at  \cite[Halter-Koch (1998), Exercise 21.6, page
244]{H-K}, we note that part (ii)  was already known for the special case of integral domains (i.e., an essential domain is a P$v$MD if and only if the intersection of two principal ideals is a  $v$--finite $v$--ideal,  \cite[Zafrullah (1978), Lemma 8]{Zi}) 
and   part (iii) is related to the following fact concerning integral domains:\  for $F \in \f(D)$, $F$ is $t$--invertible if and only if  $(F^{-1} :F^{-1}) =D$ and $F^{-1}$ is $v$--finite.  The previous property follows immediately from the following statements: 
\begin{itemize} \it
\item[\rm (a.1)] \hskip 20pt let $F\in \f(D)$, then $F$ is $t$--invertible if and only if $F$ is $v$--invertible and $F^{-1}$ is $v$--finite;  
\item[\rm (a.2)] \hskip 20pt  let $A \in \F(D)$, then $A$ is $v$--invertible if and only if $(A^{-1} : A^{-1}) =D$.
\end{itemize}

 The statement  (a.1)  can be found in \cite[Zafrullah (2000)]{Zaf}  and   (a.2)  is posted in \cite[Zafrullah (2008)]{helpdesk}. For reader's convenience, we  next  give their proofs.   

For the  ``only if part''  of (a.1),   if $F\in \f(D)$ is  $t$--invertible, then $F$ is clearly $v$--invertible and $F^{-1}$ is also $t$--invertible. Hence $F^{-1}$ is $t$--finite and thus $v$--finite.

For a ``semistar version'' of  (a.1),  see for instance \cite[Fontana-Picozza (2005), Lemma 2.5]{FPi}.

For the   ``if part''   of  (a.2), note that $AA^{-1} \subseteq D$ and so   $(AA^{-1})^{-1} \supseteq D$. 
Let $x \in (AA^{-1})^{-1} $, hence $xAA^{-1} \subseteq D$ and so $xA^{-1} \subseteq A^{-1}$, i.e., $x \in (A^{-1} : A^{-1}) =D$.
For the   ``only if part'',   note that in general $D \subseteq (A^{-1} : A^{-1})$. For the reverse inclusion, let $x \in (A^{-1} : A^{-1})$, hence $xA^{-1} \subseteq A^{-1}$. Multiplying both sides by $A$ and applying the $v$--operation, we have $x D = x(AA^{-1})^{v} \subseteq (AA^{-1})^{v} =D$, i.e., $x \in D$ and so  $D \subseteq (A^{-1} : A^{-1})$.   A simple proof of (a.2) can also be deduced from
\cite[Halter-Koch (1998), Theorem 13.4]{H-K}.

 It is indeed
remarkable that all those results known for integral domains can be
interpreted and extended to monoids.

 (b)   We have observed in  (a)   that a P$v$MD is an essential domain such that the intersection of two principal ideals is a  $v$--finite   $v$--ideal.  It can be also shown that  $D$ is a P$v$MD 
if and only if $(a) \cap (b)$ is $t$--invertible in $D$,  for all nonzero $a, b \in D$  \cite[Malik-Mott-Zafrullah (1988), Corollary 1.8]{MMZ}.

For $v$--domains we have the following ``$v$--version'' of the previous characterization for P$v$MD's: 
$$
D \mbox{ \it is a $v$--domain} \, \Leftrightarrow \, (a) \cap (b) \mbox{ \it is $v$--invertible in $D$,  for all nonzero $a, b \in D$.}$$

 The idea of proof is simple and goes along the same lines as those of P$v$MD's.  Recall that  every $F \in \f(D)$  is invertible (respectively, $v$--invertible; $t$--invertible) if and only if every nonzero two generated ideal of $D$ is invertible (respectively, $v$--invertible; $t$--invertible) \cite[Pr\"ufer (1932), page 7]{Prufer-32} or  \cite[Gilmer (1972), Theorem 22.1]{Gi}  (respectively,  for the ``$v$--invertibility case'',   \cite[Mott-Nashier-Zafrullah (1990), Lemma 2.6]{MNZ};    for the ``$t$--invertibility case'',   \cite[Malik-Mott-Zafrullah (1988), Lemma 1.7]{MMZ});   for the general case of star operations, see the following  Remark \ref{rk:5} (c).  Moreover, for all nonzero $a, b \in D$, we have:
$$
\begin{array}{rl}
(a, b)^{-1} =& \frac{1}{a}D \cap \frac{1}{b}D  = \frac{1}{ab}(aD \cap bD)\,, \\
(a,b)(a, b)^{-1} =&  \frac{\, 1\, }{\, ab\, }(a, b)(aD \cap bD)\,.
\end{array}
$$
Therefore, in particular, the fractional ideal $(a,b)^{-1}$ (or, equivalently, $(a,b)$)  is   $v$--invertible  if and only if the ideal $aD \cap bD$ is  $v$--invertible.

  (c)    Note that, by the observations contained in the previous point  (b), if $D$ is a Pr\"ufer domain 
then  $(a) \cap (b)$ is  invertible in $D$,  for all nonzero $a, b \in D$. However, the converse is not true,  as we will see in Sections 2.c
 and 2.e 
  (Irreversibility of $\Rightarrow _{7}$). The reason for this is that   $aD \cap bD$ invertible allows only that the ideal
   $\frac{\ (a, b)^v}{ab}$ (or, equivalently, ${(a, b)}^v$) is invertible and not necessarily the ideal $(a,b)$. \end{remark} 

 Call a \it P-domain \rm an integral  domain such that every ring of fractions  is essential (or, equivalently, \emph{a locally essential domain}, i.e., an integral domain $D$ such that $D_P$ is essential, for each prime ideal $P$ of $D$) \cite[Mott-Zafrullah (1981), Proposition 1.1]{MZ}. Note that every ring of fractions of a P$v$MD is still a P$v$MD (see Section \ref{fractions} for more details), in particular, since a P$v$MD is essential, a locally P$v$MD is a P-domain.
Examples of P-domains include  Krull domains. As a matter of fact, by using Griffin's characterization of P$v$MD's \cite[Griffin (1967), Theorem 5]{Gr}, a Krull domain is a P$v$MD, since in a Krull domain $D$ the maximal $t$--ideals (= maximal $v$--ideals)  coincide with the height 1 prime ideals \cite[Gilmer (1972), Corollary 44.3 and 44.8]{Gi} and  $D = \bigcap \{D_P \mid P \mbox{ is an height 1 prime ideal of $D$} \}$,  where $D_P$ is a discrete valuation domain for all height 1 prime ideals $P$ of $D$ \cite[Gilmer (1972), (43.1)]{Gi}.   Furthermore, it is well known that every ring of fractions of a Krull domain is still a Krull domain \cite[BAC, Ch. 7, \S 1, N. 4, Proposition 6]{Bo}.
  
  With these observations at hand,  we have the following picture:

$$
\begin{array}{rl} 
\mbox{Krull domain} \; \Rightarrow _{0}  &  \mbox{P$v$MD}\,; \\
\mbox{Pr\"ufer domain} \; \Rightarrow _{1}  &  \mbox{P$v$MD} \; \Rightarrow _{2}  \; \mbox{locally P$v$MD}  \\
 \Rightarrow
_{3} &  \mbox{P-domain} \; \Rightarrow _{4} \; \mbox{essential domain} \; \\ \Rightarrow
_{5}&  \mbox{$v$--domain}\, .
\end{array}
$$

 \begin{remark} \label{rk:3} \rm Note that  P-domains were  originally  defined as the integral domains $D$ such that $D_Q$ is a valuation domain for every \emph{associated prime ideal $Q$ of a principal ideal of $D$} (i.e., for every prime ideal which is minimal over an ideal of the type $(aD : bD)$ for some $a \in D$ and $b \in D \setminus aD$) \cite[Mott-Zafrullah (1981), page 2]{MZ}.  The P-domains were characterized in a somewhat special way
 in \cite[Papick (1983), Corollary 2.3]{Pa}:  $D$ is a P-domain if and only if $D$ is integrally closed and, for each $u \in K$, $D \subseteq D[u]$ satisfies INC  at every associated prime ideal $Q$ of a principal ideal of $D$.
  \end{remark}


\noindent \bf 2.c B\'ezout-type domains and $v$--domains. \rm Recall that an integral domain $D$ is a \it B\'ezout domain \rm if every
finitely generated ideal of $D$ is principal and $D$ is a \emph{GCD domain} if, for all nonzero $a, b \in D$,  a greatest common divisor of $a$ and $b$,  GCD$(a,b)$, exists   and is in $D$.  Among the characterizations of the GCD domains we have that $D$ is a GCD domain if and
only if, for every $F\in \f(D)$,  $F^{v}$ is principal or, equivalently, if and only if the intersection of two (integral) principal ideals of $D$ is still principal (see, for instance, \cite[D.D. Anderson (2000), Theorem 4.1]{Anderson-00}   and also Remark \ref{intersection-princ} (b)).   From Remark \ref{intersection-princ} (b), we deduce immediately that a GCD domain is a $v$--domain. 

 However, in between GCD domains and $v$--domains lie several other distinguished classes of integral domains.    An important generalization of the notion of GCD domain was introduced in \cite[Anderson-Anderson (1979)]{AA} where an integral domain $D$ is called a  \it Generalized GCD \rm (for short, \emph{GGCD})
\emph{domain}  if the intersection of two (integral) invertible ideals of $D$ is invertible $D$. 
 It is well known that $D$ is a  GGCD   domain  if and only if, for each $F\in \f(D)$,  $F^{v}$ is invertible \cite[Anderson-Anderson (1979), Theorem 1]{AA}.   In particular, a Pr\"ufer domain is a   GGCD domain. 
 From the fact that  an invertible ideal in a local domain is principal \cite[Kaplansky (1970), Theorem 59]{Kap}, we easily deduce that a GGCD domain is locally a GCD domain.
  On the other hand, from the definition of P$v$MD, we easily deduce that a GCD domain is a P$v$MD
 (see also \cite[D.D. Anderson (2000), Section 3]{Anderson-00}).  Therefore,  we have the following addition to the existing picture: 
$$
\begin{array}{rl}
\mbox{B\'ezout domain} \; \Rightarrow _{6}  &   \mbox{GCD domain} \; \Rightarrow _{7}  \; \mbox{GGCD domain}  \\
 \Rightarrow
_{8} & \mbox{locally GCD domain} \; \Rightarrow _{9} \; \mbox{locally P$v$MD} \; \\ \Rightarrow
_{3}&  \mbox{.....} \; \Rightarrow _{4}  \mbox{.....} \;  \Rightarrow _{5}  \mbox{$v$--domain}\, .
\end{array}
$$



\noindent \bf 2.d Integral closures and $v$--domains. \rm Recall  that  an integral domain $D$ with quotient field $K$  is called a \emph{completely integrally closed} (for short, \emph{CIC}) \emph{domain} if $D =    \{z \in K \mid \mbox{ for all $n \geq 0$, } az^n \in D \mbox { for some nonzero $a \in D$}\}$.    It is well known that \it the following statements are equivalent.
\begin{itemize}
\item[\rm (i)]  \hskip 10pt $D$ is  CIC;

\item[\rm (ii)] \hskip 10pt  for all $A\in \F(D)$,  $(A^v: A^v)=D$; 

\item[\rm (ii$^\prime$)] \hskip 10pt  for all $A\in \F(D)$,  $(A:A)=D$;

 \item[\rm (ii$^{\prime\prime}$)] \hskip 10pt   for all $A\in \F(D)$,  $(A^{-1}:A^{-1})=D$; 

\item[\rm (iii)] \hskip 10pt   for all $A\in \F(D)$, $(AA^{-1})^{v}=D$;

\end{itemize}\rm
(see \cite[Gilmer (1972), Theorem 34.3]{Gi} and Remark \ref{intersection-princ} (a.2); for a general monoid version of this characterization, see \cite[Halter-Koch (1998), page 156]{H-K}). 

In Bourbaki \cite[BAC, Ch. 7, \S 1,   Exercice 30]{Bo} an
integral domain $D$ is called \emph{regu\-larly integrally closed} if,  for all $F \in
\f(D)$, $F^v$ is regular with respect to the $v$--multiplication (i.e., if $(FG)^v = (FH)^v$ for  $G, H \in \f(D)$ then $G^v = H^v$). 

\begin{theorem} \label{lorenzen}\rm  (\cite[Gilmer (1972), Theorem 34.6]{Gi} and \cite[BAC, Ch. 7, \S 1,  Exercice  30 (b)]{Bo}) \it Let $D$ be an integral domain, then the following are equivalent.
\begin{itemize}
\item[\rm (i)] \hskip 10pt  $D$ is a regularly integrally closed domain.
\item[\rm (ii$_{\f}$)]  \hskip 10pt  For all $F \in
\f(D)$, $(F^{v}: F^{v})=D$. 

\item[\rm (iii$_{\f}$)] \hskip 10pt   For all $F\in \f(D)$ $
(FF^{-1})^{-1}=D$  (or, equivalently, $
(FF^{-1})^v=D$).

\item[\rm (iv)]  \hskip 10pt  $D$ is a $v$--domain.
\end{itemize}
\end{theorem}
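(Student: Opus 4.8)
The plan is to establish the cycle of implications (ii$_{\f}$) $\Rightarrow$ (iii$_{\f}$) $\Rightarrow$ (iv) $\Rightarrow$ (ii$_{\f}$), and to match these with (i) along the way. The identification (iii$_{\f}$) $\Leftrightarrow$ (iv) is purely definitional: by definition, $D$ is a $v$--domain precisely when $(FF^{-1})^{v}=D$ for every $F\in\f(D)$, and since $(FF^{-1})^{v}=\bigl((FF^{-1})^{-1}\bigr)^{-1}$, the two forms of the condition in (iii$_{\f}$) say the same thing. Likewise the equivalence (i) $\Leftrightarrow$ (ii$_{\f}$) is essentially the definition of ``regularly integrally closed'' combined with a standard unwinding of $v$--regularity of $F^{v}$: I would show that the cancellation property ``$(FG)^v=(FH)^v \Rightarrow G^v=H^v$ for $G,H\in\f(D)$'' forces $(F^v:F^v)=D$ by taking $G=(F^v:F^v)$ viewed through suitable finitely generated approximations, and conversely that $(F^v:F^v)=D$ yields the cancellation (this is the $\f$--analogue of the classical CIC characterization (ii) $\Leftrightarrow$ (ii$'$) $\Leftrightarrow$ (ii$''$) recalled just before the theorem, but restricted to finitely generated ideals). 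The only subtlety is that $F^v$ need not be finitely generated, so one has to pass to finitely generated subideals carefully; this is routine using that $v=v_f$ on $\f(D)$ is of finite type on such ideals.

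The heart of the matter is the equivalence of the ``self-map'' condition (ii$_{\f}$), namely $(F^v:F^v)=D$ for all $F\in\f(D)$, with the ``$v$--invertibility'' condition (iii$_{\f}$)/(iv). First I would prove (iii$_{\f}$) $\Rightarrow$ (ii$_{\f}$): given $F\in\f(D)$, suppose $x\in(F^v:F^v)$, so $xF^v\subseteq F^v$, hence $xF^vF^{-1}\subseteq F^vF^{-1}\subseteq D$ (using $F^{-1}=(F^v)^{-1}$), so $x(F^vF^{-1})\subseteq D$, i.e. $x\in (F^vF^{-1})^{-1}=\bigl((FF^{-1})^{v}\bigr)^{-1}\cdot$ — more directly, $x\in\bigl(F^{-1}(F^v)\bigr)^{-1}$, and since $\bigl(FF^{-1}\bigr)^{v}=D$ by hypothesis one gets $x(FF^{-1})^v\subseteq D$, hence $xD\subseteq D$, i.e. $x\in D$. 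This is exactly the ``if part of (a.2)'' argument from Remark \ref{intersection-princ}, applied with $A$ ranging over finitely generated ideals. For the reverse implication (ii$_{\f}$) $\Rightarrow$ (iii$_{\f}$): let $F\in\f(D)$ and let $x\in(FF^{-1})^{-1}$, so $xFF^{-1}\subseteq D$, hence $xF^{-1}\subseteq F^{-1}$; applying the $v$--operation (equivalently, since $F$ is f.g., $F^{-1}=(F^v)^{-1}$ and $(F^{-1})^{-1}=F^v$) gives $xF^{-1}\subseteq F^{-1}$, so $x\in(F^{-1}:F^{-1})$. Now I would invoke the observation (again from Remark \ref{intersection-princ}(a)) that for finitely generated $F$ the three modules $(F^v:F^v)$, $(F:F)$, and $(F^{-1}:F^{-1})$ coincide — this follows by the same symmetric multiply-and-apply-$v$ manipulations as in the CIC case — so $x\in(F^v:F^v)=D$ by hypothesis. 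Hence $(FF^{-1})^{-1}\subseteq D$; the reverse containment $(FF^{-1})^{-1}\supseteq D$ is automatic since $FF^{-1}\subseteq D$. Thus $(FF^{-1})^{-1}=D$, which is (iii$_{\f}$).

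The step I expect to be the main obstacle is the clean identification $(F^v:F^v)=(F^{-1}:F^{-1})$ for finitely generated $F$ — this is where one must be careful, since a priori these are only submodules of $K$ and the inclusions in one direction need the $v$--closure to ``not overshoot.'' Concretely, $x\in(F^{-1}:F^{-1})$ gives $xF^{-1}\subseteq F^{-1}$, and applying $v$: $x(F^{-1})^{v}\subseteq(F^{-1})^{v}$; but $(F^{-1})^v=F^{-1}$ (the inverse of any ideal is already a $v$--ideal, by \cite[Gilmer (1972), Theorem 34.1]{Gi}), which merely recovers the hypothesis. To get $x\in(F^v:F^v)$, I would instead take inverses: from $xF^{-1}\subseteq F^{-1}$ and the fact that $(\cdot)^{-1}$ is order-reversing, $(xF^{-1})^{-1}\supseteq(F^{-1})^{-1}=F^v$, i.e. $x^{-1}F^v\supseteq F^v$ — wait, that needs $x$ a unit — so more safely: $F^v=(F^{-1})^{-1}\subseteq(xF^{-1})^{-1}=x^{-1}(F^{-1})^{-1}=x^{-1}F^v$ whenever $x\neq 0$, giving $xF^v\subseteq F^v$. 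Since every nonzero $x\in K$ is handled and $0$ trivially, this yields $(F^{-1}:F^{-1})\subseteq(F^v:F^v)$, and the reverse inclusion is symmetric. Once this identity is in hand, the equivalences close up immediately, and the whole theorem reduces to bookkeeping with the $v$--operation and the definition of $v$--domain. I would present it as the chain (i) $\Leftrightarrow$ (ii$_{\f}$) $\Leftrightarrow$ (iii$_{\f}$) $\Leftrightarrow$ (iv), citing \cite[Gilmer (1972), Theorem 34.6]{Gi} and \cite[BAC, Ch. 7, \S 1, Exercice 30(b)]{Bo} for the historical attribution, with the detailed argument being the four short module computations sketched above.
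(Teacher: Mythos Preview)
The paper does not supply its own proof of Theorem~\ref{lorenzen}; it records the statement with attributions to Gilmer, Bourbaki, Lorenzen, Dieudonn\'e, Jaffard and Halter-Koch, and the only argument in the paper touching these equivalences is the short computation in Remark~\ref{intersection-princ}(a.2), which is exactly the move you use for (iii$_{\f}$)$\Leftrightarrow$(ii$^{\prime\prime}_{\f}$). So your approach is consistent with what the paper sketches, and the core computations --- $x\in(F^v:F^v)\Rightarrow x\in(FF^{-1})^{-1}=D$ under (iii$_{\f}$), and $x\in(FF^{-1})^{-1}\Rightarrow x\in(F^{-1}:F^{-1})\subseteq(F^v:F^v)=D$ under (ii$_{\f}$) --- are correct.

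Two points need correction. First, your parenthetical claim that $(F^v:F^v)$, $(F:F)$ and $(F^{-1}:F^{-1})$ all coincide for finitely generated $F$ is false: one always has $(F:F)\subseteq(F^v:F^v)=(F^{-1}:F^{-1})$, but the first containment can be strict, and the paper explicitly warns about this in Remark~\ref{rk:5}(a) (the condition $(F:F)=D$ for all $F\in\f(D)$ characterizes integrally closed, which is strictly weaker than $v$--domain). Fortunately your actual argument only uses $(F^{-1}:F^{-1})\subseteq(F^v:F^v)$, which is correct by the inverse-taking step you wrote out, so simply drop the false triple identity. Second, your treatment of (i)$\Leftrightarrow$(ii$_{\f}$) is too sketchy: (i)$\Rightarrow$(ii$_{\f}$) is clean (for $x\in(F^v:F^v)$ take $G=(1,x)$ and $H=(1)$, observe $(FG)^v=F^v=(FH)^v$, cancel to get $(1,x)^v=D$), but the converse does \emph{not} follow by a ``finitely generated approximation'' argument as you suggest --- it goes through (iii$_{\f}$), since $v$--invertibility of $F$ is precisely what lets you cancel $(FG)^v=(FH)^v$ by multiplying by $F^{-1}$. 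So organize the cycle as (i)$\Rightarrow$(ii$_{\f}$)$\Rightarrow$(iii$_{\f}$)$\Rightarrow$(i), with (iii$_{\f}$)$\Leftrightarrow$(iv) by definition.
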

  The original version of Theorem \ref{lorenzen} appeared in \cite[Lorenzen (1939), page 538]{Lo-39} (see also \cite[Dieudonn\'{e} (1941), page 139]{Di-41}  and \cite[Jaffard (1951), Th\'eor\`eme 13]{J:1951}).     A general monoid version of the previous cha\-racterization is given in  \cite[Halter-Koch (1998), Theorem 19.2]{H-K}.

  \begin{remark}\label{rk:5} (a) Note that the condition 
 
 \noindent (ii$'_{\f}$)  \emph{for all} $F \in
\f(D)$, $(F: F)=D$ \\
is equivalent to say that $D$ is integrally closed \cite[Gilmer (1972), Proposition 34.7]{Gi} and so it is weaker than condition (ii$_{\f}$) of the previous Theorem \ref{lorenzen}, since $(F^{v}: F^{v}) = (F^{v}:  F) \supseteq (F:F)$.

 On the other hand, by Remark \ref{intersection-princ} (a.2),
  the condition
 
 \noindent (ii$^{\prime\prime}_{\f}$)   \emph{for all } $F \in
\f(D)$, $(F^{-1}: F^{-1})=D$ \\
is equivalent to the other statements  of  Theorem \ref{lorenzen}.  

 (b)    By \cite[Mott-Nashier-Zafrullah (1990), Lemma 2.6]{MNZ}, condition (iii$_{\f}$) of the previous theorem   is equivalent to

\noindent (iii$_{\boldsymbol{2}}$) \emph{Every nonzero fractional ideal with two generators is 
$v$--invertible}.

  This characterization is a variation of  Pr\"ufer's classical result that an integral domain is Pr\"ufer if and only if each nonzero ideal with two generators is invertible  (Remark \ref{intersection-princ} (b)) and of the characterization of P$v$MD's also  recalled in that remark.

 (c)   Note that several classes of Pr\"ufer-like domains can be studied in a unified frame by using star and semistar operations. For instance Pr\"ufer star-multiplication domains were introduced  in \cite[Houston-Malik-Mott (1984)]{HMM}. Later, in \cite[Fontana-Jara-Santos (2003)]{FJS},
 the authors studied  Pr\"ufer semistar-multiplication domains and gave several   characterizations of these domains, that are new also for the classical case of P$v$MD's. Other important contributions, in general settings, were given recently in   \cite[Picozza (2008)]{Pi} and \cite[Halter-Koch (2008)]{H-K3}.    \\
 In \cite[Anderson-Anderson-Fontana-Zafrullah (2008), Section 2]{AAFZ},  given a star operation $\ast$ on an integral domain $D$, the authors call   $D$ a  \it   $\ast$--Pr\"ufer domain \rm       if every nonzero finitely generated  ideal of $D$ is 
$\ast $--invertible  (i.e.,  $(FF^{-1})^{\ast}=D$ for  all $F\in \f(D)$).      (Note that    $\ast$--Pr\"ufer  domains  were previously introduced  in  the  case of semistar operations $\star$ under the name of $\star$--domains \cite[Fontana-Picozza (2006), Section 2]{FPi2}.)
 Since  a $\ast $--invertible ideal  is always $v$--invertible, a   $\ast$--Pr\"ufer domain   is always a $v$--domain. More precisely, $d$--Pr\"ufer (respectively,  $t$--Pr\"ufer; $v$--Pr\"ufer) domains coincide with  Pr\"ufer (respectively, Pr\"ufer $v$--multiplication; {$v$--~\!)} domains.
    
    Note that, in  \cite[Anderson-Anderson-Fontana-Zafrullah (2008), Theorem 2.2]{AAFZ}, the authors 
 show that a star operation version of (iii$_{\boldsymbol{2}}$) considered in point (b) characterizes $\ast$--Pr\"ufer domains, i.e., $D$ is a $\ast$--Pr\"ufer domain if and only if  every nonzero two generated  ideal of $D$ is 
$\ast $-invertible.     An analogous result, in the general setting of monoids,  
can be found in  \cite[Halter-Koch (1998), Lemma 17.2]{H-K}. 

 (d)    Let $\f^v(D) := \{ F^v\mid F \in \f(D)\}$  be the set of all divisorial ideals of finite type of an integral domain $D$ (in \cite[Dieudonn\'{e} (1941)]{Di-41}, this set is denoted by $\mathfrak{M}_{f}$).  By Theorem \ref{lorenzen}, we have that a $v$--domain is an integral domain  $D$ such that each element of $F^v \in  \f^v(D)$ is $v$--invertible,   but  $F^{-1} \ (= (F^v)^{-1})$ does not necessarily belong to $\f^v(D)$.   When (and only when), in a $v$--domain $D$,  $F^{-1} \in \f^v(D)$,   $D$ is a P$v$MD (Remark \ref{intersection-princ} (a.1)).   

The ``regular'' teminology for the elements of $\f^v(D)$  used by \cite[Dieudonn\'{e} (1941), page 139]{Di-41} (see the above definition of  $F^v$ regular with respect to the $v$--multiplication)   is totally
different from the  notion of  ``von Neumann regular'', usually considered for elements of a ring or of a semigroup.  However, it may be instructive to record some observations showing that, in the present situation, the two notions are somehow related.

 Recall that, by a \emph{Clifford semigroup}, we mean a multiplicative commutative semigroup $\KK$, containing a unit element, such that each
element $a$ of $\KK$ is \emph{von Neumann regular} (this means that  there is $b\in \KK$ such
that $a^{2}b=a$).

\begin{itemize}
\item[$(\alpha)$] \hskip 10pt
 Let $\KK$ be a commutative and cancellative monoid.   If   $\KK$ is a  Clifford semigroup, 
 then $a$ is invertible in $\KK$ (and conversely); in other words, $\KK$ is a group.  

\item[$(\beta)$] \hskip 10pt Let $D$ be a $v$--domain.
If  $A\in \f^v(D)$ is von Neumann
regular in the monoid  $\f^v(D)$ under $v$--multiplication, then $A$ is $t$--invertible (or, equiva\-lently, $A^{-1} \in \f^v(D)$). Consequently,  an integral domain $D$  is a P$v$MD if and only if $D$ is a $v$--domain and the monoid  $
\f^v(D)$ (under $v$--multiplication) is
Clifford regular.  

\end{itemize}

 The proofs of $(\alpha)$ and $(\beta)$ are straightforward, after recalling that $\f^v(D)$ under $v$--multiplication is a commutative monoid and, by definition, it is cancellative if  $D$ is a $v$--domain.

\smallskip

Note that, in   the ``if part'' of   $(\beta)$, the assumption that $D$ is   a $v$--domain is essential.
As a matter of fact, it is not true that  an integral domain $D$, such that every member of the monoid  $\f^v(D)$ under the $v$--operation  is von
Neumann regular, is  a $v$--domain. 
For instance,  in \cite[Zanardo-Zannier (1994), Theorem 11]
{ZZ}  (see also \cite[Dade-Taussky-Zassenhaus (1962)]{DTZ}),  the authors show that for every quadratic order $D$,  each nonzero ideal $I$ of $D$ satisfies  $I^{2}J=cI
$, i.e., $I^{2}J(1/c)=I$,  
 for some (nonzero) ideal $J$ of $D$ and some nonzero $c\in D$. So, in particular,  in this situation $\f(D) = \F(D)$ and  every element of the monoid  $\f^v(D)$ is von Neumann regular (we do not even need to apply the $v$--operation
in this case), however not  all quadratic orders are integrally closed (e.g., $D :=\mathbb Z[\sqrt 5]$) and so, in general,  not all elements of  $\f^v(D)$ are regular with respect to the $v$--operation (i.e., $D$ is not a $v$--domain).

Clifford regularity  for  class and $t$--class semigroups of ideals in  various types of integral domains   was investigated, for instance, in [20 and 21, Bazzoni (1996), (2001)]
\cite[Fossum (1973)]{Fuchs}, [71,72 and 73, Kabbaj-Mimouni, (2003), (2007), (2008)],
 \cite[Sega (2007)]{Se}, and [54 and 55, Halter-Koch (2007), (2008)].
 In particular, in the last paper, Halter-Koch proves a stronger and   much  deeper version of $(\beta)$, that is, a $v$--domain  having its $t$--class semigroups of ideals Clifford regular is a domain of Krull-type (i.e., a P$v$MD  with finite $t$--character). This result generalizes \cite[Kabbaj-Mimouni (2007), Theorem 3.2]{KM2} on Pr\"ufer $v$--\-multi\-pli\-cation domains.  

 (e)     In the situation of point (d, $\beta$), 
the condition that every   $v$--finite $
v$--ideal     is regular, in the sense of von Neumann,   in the larger monoid $
\F^v(D) := \{A^v \mid A \in \F(D)\}$ of all $v$--ideals of    $D$ (under $v$--multiplication) is too weak  to imply that $D$ is a P$v$MD.  

As a matter of fact, if we assume that $D$ is a $v$--domain, then  every $A \in \f^v(D)$ is $v$--invertible  in the (larger) monoid $\F^v(D)$. Therefore, $A$ is von Neumann regular in $\F^v(D)$, since $(AB)^v = D$ for some $B \in \F^v(D)$ and thus, multiplying both sides by $A$ and applying the $v$--operation,  we get   $(A^2B)^v = A $.

\end{remark}

\begin{remark} \label{pseudo} \rm  Regularly
integrally closed integral domains make their appearance with a different terminology  in the study of a weaker form of
 integrality,   introduced in the paper  \cite[D.F. Anderson-Houston-Zafrullah (1991)]{AHZ}. Recall that, given an integral domain $D$ with quotient field $K$,  an
element $z \in K$ is called \emph{ pseudo-integral over}  $D$ if $z\in (
F^{v}: F^{v})$ for some $F\in \f(D)$.  The terms  \emph{pseudo-integral closure} (i.e., $
\widetilde{D}:=\bigcup \{(F^{v}: F^{v})) \mid F \in \f(D)\}$ and \emph{pseudo-integrally closed domain} (i.e., $D=\widetilde{D}$) are coined in the obvious fashion and
it is clear from the definition that  pseudo-integrally closed coincides with regularly integrally closed.
\end{remark}

 From the previous
observations,  we have the following addition to the existing picture:  
$$
 \mbox{CIC domain} \; \Rightarrow _{10}  \; \mbox{ $v$--domain} \; \Rightarrow _{11}  \; \mbox{integrally closed domain}.
$$

  Note that in the Noetherian case, the previous  three classes of domains coincide (see the following Proposition \ref{pr:8} (2) or \cite[Gilmer (1972), Theorem 34.3 and Proposition 34.7]{Gi}). Recall also  that Krull domains can be characterized by the property that, for all $A\in \F(D)$,
$A$ is $t$--invertible   \cite[Kang (1989), Theorem 3.6]{Kang-89}.  This property is clearly  stronger than the  condition (iii$_{\f}$) of previous Theorem \ref{lorenzen}  and, more precisely, it is strictly stronger than (iii$_{\f}$),  since a Krull domain is CIC  (by condition (iii) of the above characterizations of CIC domains, see also \cite[BAC, Ch.7, \S 1, N. 3, Th\'eor\`eme 2]{Bo}) and a CIC domain is a $v$--domain, but the converse does not hold, as we will see in the following Section 2.e.

\begin{remark} \rm
Note that Okabe and
Matsuda \cite[Okabe-Matsuda (1992)]{OM} generalized pseudo-integral closure to the star operation setting. Given a star operation $\ast$ on an integral domain $D$, they  call the \emph{$\ast $--integral
closure of} $D$ its overring   $\bigcup \{(F^{\ast }: F^{\ast }) \mid F\in \f(D)\}$ denoted
by $\cl^{\ast }(D)$ in \cite[Halter-Koch (1997)]{H-K1}.   
Note   that, in view of this notation, $\widetilde{D}
=\cl^{v}(D)$ (Remark \ref{pseudo}) and the integral closure $\overline{D}$ of $D$ coincides with $\cl^{d}(D)$ \cite[Gilmer (1972), Proposition 34.7]{Gi}.  Clearly, if $\ast_1$ and $ \ast_2$  are two star operations on $D$ and $\ast_1 \leq \ast_2$, then 
$\cl^{\ast_1 }(D) \subseteq \cl^{\ast_2}(D)$. In particular, for each star operation $\ast$ on $D$, we have
$\overline{D} \subseteq \cl^{\ast }(D) \subseteq \widetilde{D}$.

It is not hard to see that $\cl^{\ast }(D)$ is integrally closed  \cite[Okabe-Matsuda (1992), Theorem 2.8]{OM} and is contained in the complete integral closure of $D$, which coincides with $\bigcup \{(A: A) \mid A\in \F(D)\}$ \cite[Gilmer (1972), Theorem 34.3]{Gi}.

Recall also that, in \cite[Halter-Koch (1998), Section 3]{H-K}, the author introduces a star operation of finite type on the integral domain $\cl^{\ast }(D)$, that we denote here by $\cl(\ast)$, ~defined as follows, for all $G \in \f(\cl^{\ast}(D))$:
$$ 
G^{
{\mbox{\footnotesize{\cl}}}
(\ast)} : = \bigcup \{((F^\ast: F^\ast)G)^\ast \mid F \in \f(\cl^{\ast}(D)) \}\,.
$$
Clearly, $D^{
{\mbox{\footnotesize{\cl}}}(\ast)} = \cl^{\ast }(D)$.  Call an integral domain  $D$ \emph{$\ast$--integrally closed}  when $D = \cl^{\ast }(D)$.  Then, from the fact that $\cl(\ast)$ is a star operation on $\cl^{\ast }(D)$,  it follows that $\cl^{\ast }(D)$ is
$\cl(\ast)$--integrally closed.   In general, if $D$ is not necessarily $\ast$--integrally closed, then $\cl(\ast)$, defined on $\f(D)$,   gives rise naturally  to a  semistar operation (of finite type) on $D$ \cite[Fontana-Loper (2001), Definition 4.2]{FLo-01}. 

  Note that   the domain $\widetilde{D} \ (= \cl^v(D))$, even if   it is $\cl(v)$--integrally closed,  in general  is not $v_{\widetilde{D}}$--integrally closed; a counterexample is given in \cite[D.F. Anderson-Houston-Zafrullah (1991), Example 2.1]{AHZ} by using a construction due to \cite[Gilmer-Heinzer (1966)]{Gi-He}. On the other hand, since an integral domain $D$ is a $v$--domain if and only if $D =\cl^v(D)$ (Theorem \ref{lorenzen}), from the previous observation we deduce that, in general, $\widetilde{D} $ is not a $v$--domain.   On the other hand,    using a particular ``$D+M$ construction'', in \cite[Okabe-Matsuda (1992), Example 3.4]{OM}, the authors construct an example of a non--$v$--domain $D$ such that $\widetilde{D}$ is a $v$--domain, i.e., $D \subsetneq \widetilde{D}=\cl^{v_{\widetilde{D}}}(\widetilde{D})$.
\end{remark}


 \noindent \bf 2.e Irreversibility of the implications ``${\boldsymbol{\Rightarrow _{n}}}$''. \rm  We start by observing that, under standard finiteness assumptions, several classes of domains considered above coincide.   Recall that an integral domain $D$ is called \emph{$v$--coherent} if   a finite  intersection of $v$--finite $v$--ideals is a $v$--finite $v$--ideal  or, equivalently, if $F^{-1}$ is $v$--finite for all $F \in \f(D)$ \cite[Fontana-Gabelli (1996), Proposition 3.6]{FoGa}, and it is called a \emph{$v$--finite conductor domain} if the intersection of two principal ideals is $v$--finite   \cite[Dumitrescu-Zafrullah (2008)]{DZ}.  From the definitions, it follows that  a $v$--coherent domain is a $v$--finite conductor domain.
From Remark \ref{intersection-princ} (a.1),  we deduce immediately that
$$
D \mbox{ \it is a P$v$MD} \;  \Leftrightarrow \; D \mbox{ \it is a $v$--coherent $v$--domain.}
$$
 In case of a $v$--domain, the notions of $v$--finite conductor domain and $v$--coherent domain coincide.  As a matter of fact,   as we have observed in Remark \ref{rk:5} (c), a  P$v$MD is exactly a $t$--Pr\"ufer domain and an integral domain $D$ is $t$--Pr\"ufer if and only if every nonzero two generated ideal is $t$--invertible. This translates to $D$ is a P$v$MD if and only if $(a,b)$ is $v$--invertible and $(a) \cap (b)$ is $v$--finite, for all $a, b \in D$ (see also Remark \ref{rk:5} (b)). In other words, 
 $$
D \mbox{ \it is a P$v$MD} \;  \Leftrightarrow \; D \mbox{ \it is a $v$--finite conductor $v$--domain.}
$$

Recall that an integral domain  
 $D$ is a GGCD domain if and only if  $D$ is a P$v$MD that is a
locally GCD domain  \cite[Anderson-Anderson (1979), Corollary 1 and page 218]{AA} or \cite[Zafrullah (1987), Corollary 3.4]{Z}. On the other hand, we have already observed that a locally GCD domain  is essential and it is known that an essential $v$--finite conductor domain is a P$v$MD \cite[Zafrullah (1978), Lemma 8]{Zi}. 
The situation is summarized in the following:

\begin{proposition} \label{pr:8} Let $D$ be an integral domain.
\begin{itemize}
\item[\rm (1)] \hskip 6pt Assume that $D$ is a $v$--finite conductor (e.g., Noetherian) domain. Then, the following classes of domains coincide:
{
\begin{itemize}
\item[\rm (a)] \hskip 6pt P$v$MD's;
\item[\rm (b)] \hskip 6pt  locally  P$v$MD's;
\item[\rm (c)]\hskip 6pt  P--domains;
\item[\rm (d)]\hskip 6pt  essential domains.
\item[\rm (e)] \hskip 6pt locally $v$--domains;
\item[\rm (f)] \hskip 6pt $v$--domains.

\end{itemize}
}
\item[\rm (2)] \hskip 6pt  Assume that $D$ is a Noetherian domain.  Then,  the previous classes of domains \rm (a)--(f) \it coincide also with the following:
{
\begin{itemize}
\item[\rm (g)] \hskip 6pt Krull domains;
\item[\rm (h)] \hskip 6pt CIC domains;
\item[\rm (i)] \hskip 6pt integrally closed domains.
\end{itemize}
}

\item[\rm (3)] \hskip 6pt Assume that $D$ is a $v$--finite conductor (e.g., Noetherian) domain. Then, the following classes of domains coincide:
{
\begin{itemize}

\item[\rm (j)] \hskip 6pt GGCD domains;
\item[\rm (k)] \hskip 6pt locally GCD domains.
\end{itemize} 
}
\end{itemize}
\end{proposition}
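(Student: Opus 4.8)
The plan is to derive the whole proposition from the chain of implications already recorded in Sections 2.b--2.d, together with two short additional arguments, using the $v$--finite conductor (resp. Noetherian) hypothesis only to close the resulting cycles.

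For (1), I would first observe that the implications (a) $\Rightarrow_{2}$ (b) $\Rightarrow_{3}$ (c) $\Rightarrow_{4}$ (d) $\Rightarrow_{5}$ (f) hold with no finiteness assumption: a ring of fractions of a P$v$MD is again a P$v$MD, so a P$v$MD is a locally P$v$MD; a locally P$v$MD is locally essential, i.e. a P--domain; a P--domain is locally essential, hence essential; and an essential domain is a $v$--domain by Proposition~\ref{ess-vdomain}. Moreover (b) $\Rightarrow$ (e), since a localization of a P$v$MD is a P$v$MD and therefore a $v$--domain. The one step needing a new argument is (e) $\Rightarrow$ (f): if $D$ is locally a $v$--domain, then for every $F\in\f(D)$ and every maximal ideal $M$ of $D$ one has $(F^{-1})_M=(F_M)^{-1}$ (because $F$ is finitely generated), whence, using the always-valid inclusion $(A:B)_M\subseteq (A_M:B_M)$ and Remark~\ref{intersection-princ}(a.2) applied in the $v$--domain $D_M$,
$$(F^{-1}:F^{-1})_M\subseteq \big((F^{-1})_M:(F^{-1})_M\big)=\big((F_M)^{-1}:(F_M)^{-1}\big)=D_M.$$
Intersecting over all $M$ gives $(F^{-1}:F^{-1})=D$, so $F$ is $v$--invertible by Remark~\ref{intersection-princ}(a.2) again, and $D$ is a $v$--domain. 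Finally the cycle is closed by (f) $\Rightarrow$ (a), which is precisely the equivalence ``$D$ is a P$v$MD $\Leftrightarrow$ $D$ is a $v$--finite conductor $v$--domain'' stated just before the proposition; hence (a)--(f) coincide.

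For (2), a Noetherian domain is $v$--finite conductor (a finite intersection of finitely generated ideals is finitely generated), so (a)--(f) already coincide by part (1). To splice in (g)--(i) I would use the implications (g) $\Rightarrow$ (h) $\Rightarrow_{10}$ (f) $\Rightarrow_{11}$ (i), all valid with no hypothesis (a Krull domain is CIC, a CIC domain is a $v$--domain, a $v$--domain is integrally closed), and then close the cycle with the classical fact that a Noetherian integrally closed domain is a Krull domain, i.e. (i) $\Rightarrow$ (g) (see, e.g., \cite{Gi} or \cite{Bo}); this is the only place where Noetherianity is used beyond what part (1) already needs.

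For (3), (j) $\Rightarrow$ (k) holds with no finiteness (a GGCD domain is locally a GCD domain, since an invertible ideal of a local domain is principal); conversely, a locally GCD domain is essential, hence, under the $v$--finite conductor hypothesis, a P$v$MD by \cite{Zi}, and being also locally a GCD domain it is then a GGCD domain by the characterization of GGCD domains recalled above, giving (k) $\Rightarrow$ (j). The only step with genuine content in the whole proof is (e) $\Rightarrow$ (f) in part (1); the point to be careful about is that $(F^{-1}:F^{-1})$ need not commute with localization, but the one inclusion that is always available is exactly the one needed, which is why ``locally a $v$--domain'' implies ``a $v$--domain'' with no finiteness hypothesis at all. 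Everything else is bookkeeping with the implications $\Rightarrow_{0}$ through $\Rightarrow_{11}$ and the characterizations of P$v$MD's and GGCD domains stated in the text.
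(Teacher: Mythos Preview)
Your proposal is correct and follows essentially the same route as the paper, which presents Proposition~\ref{pr:8} as a summary of the preceding discussion rather than giving a formal proof: both of you cycle through the implications $\Rightarrow_2,\dots,\Rightarrow_5$ and close with ``$v$--finite conductor $v$--domain $\Rightarrow$ P$v$MD'' for (1), invoke Noetherian integrally closed $\Rightarrow$ Krull for (2), and use ``GGCD $\Leftrightarrow$ P$v$MD that is locally GCD'' together with ``essential $v$--finite conductor $\Rightarrow$ P$v$MD'' for (3).

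The one genuine difference is your treatment of (e) $\Rightarrow$ (f). The paper handles ``locally a $v$--domain $\Rightarrow$ $v$--domain'' later, in Section~\ref{fractions}, by introducing the star operation $\boldsymbol{\ast}=\wedge v_\lambda$ on $D$ induced by the family $\{D_M\}$ and checking directly that $(FF^{-1})^{\boldsymbol{\ast}}=D$ (Proposition~\ref{int-vdomain} and Corollary~\ref{Corollary 3}). Your argument instead localizes the criterion $(F^{-1}:F^{-1})=D$ from Remark~\ref{intersection-princ}(a.2), using only the automatic inclusion $(A:B)_M\subseteq(A_M:B_M)$ and the identity $(F^{-1})_M=(FD_M)^{-1}$ for $F$ finitely generated. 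Both arguments are short and need no finiteness hypothesis; yours is slightly more elementary in that it avoids constructing an auxiliary star operation, while the paper's version has the advantage of immediately extending to arbitrary families of flat overrings.
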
 

 Since the notion of Noetherian B\'ezout (respectively, Noetherian GCD) domain coincides with the notion of PID or principal ideal domain (respectively, of Noethe\-rian UFD (= unique factorization domain)  \cite[Gilmer (1972), Proposition 16.4]{Gi}), in the Noetherian case the picture of all classes considered above reduces to the following:
$$
\begin{array}{rl}
\mbox{Dedekind domain} &  \Rightarrow_{1,2,3,4,5}     \mbox{ \ $v$--domain} \\ 
\mbox{PID} & \Rightarrow_{6}  \mbox{ \ UFD} \;\Rightarrow_{7,8} \;  \mbox{locally UFD} \;\Rightarrow_{9,3,4,5}  \mbox{$v$--domain.}
\end{array}$$

In general, of   the implications \ $\Rightarrow _{n}$ \  (with  $0 \leq n\leq 11$) discussed above all, except \ $\Rightarrow _{3}$\ ,  are known to be irreversible. We leave
the case of irreversibility of  \ $\Rightarrow _{3}$ \ as an open question and
proceed to give examples to show that all the other implications are
irreversible.

$\bullet$ Irreversibility of $\Rightarrow _{0}$. Take any nondiscrete valuation domain or, more generally, a Pr\"ufer non-Dedekind domain.

$\bullet$ Irreversibility of $\Rightarrow _{1}$  (even in the Noetherian case).    Let $D$ be a
Pr\"ufer domain that is not a field and let $X$ be an indeterminate over $D$. Then, as $D[X]$ is a P$v$MD if and only if  $D$ is \cite[S. Malik (1979), Theorem 4.1.6]{Ma} (see also \cite[D.D. Anderson-D.F. Anderson (1981), Proposition 6.5]{AA:81}, \cite[B.G. Kang (1989), Theorem 3.7]{Kang},
\cite[D.D. Anderson-Kwak-Zafrullah (1995), Corollary 3.3]{AKZ}, and  the following Section \ref{poly}), we conclude that $D[X]$ is a P$v$MD that is not Pr\"ufer. An explicit example is $\mathbb Z[X]$, where $\mathbb Z$ is the ring of integers.

$\bullet$ Irreversibility of $\Rightarrow _{2}$.   It is well known   that
every ring of fractions of a P$v$MD is again a P$v$MD \cite[Heinzer-Ohm (1973), Proposition 1.8]{HO} (see also the following Section \ref{fractions}). The fact that $\Rightarrow _{2}$ is
not reversible has been shown by producing examples of locally P$v$MD's that
are not P$v$MD's. In \cite[Mott-Zafrullah (1981), Example 2.1]{MZ} an example of a non P$v$MD
essential domain due to Heinzer and Ohm \cite[Heinzer-Ohm (1973)]{HO} was shown to have the
property that it was locally P$v$MD and hence a P-domain.

$\bullet$ Irreversibility of $\Rightarrow _{3}$:  Open. However, as mentioned above, \cite[Mott-Zafrullah (1981), Example 2.1]{MZ} shows the existence of a P-domain which is not a P$v$MD. Note that   \cite[Zafrullah (1988), Section 2]{Z2}
gives a general method of constructing P-domains that are not P$v$MD's.

$\bullet$ Irreversibility of $\Rightarrow _{4}$. An example of an essential domain which is not a P-domain
 was constructed   in \cite[Heinzer (1981)]{He}.  Recently, in \cite[Fontana-Kabbaj (2004), Example 2.3]{FK}, the authors show the existence of $n$-dimensional essential domains which are not P-domains, for all $n \geq 2$.

$\bullet$ Irreversibility of $\Rightarrow _{5}$. 
Note that,  by $\Rightarrow _{10}$, a CIC domain is a $v$--domain and Nagata, solving with a counterexample a famous conjecture stated by Krull in 1936,  has produced an example of a
one dimensional quasilocal CIC domain that is not a valuation ring (cf. \cite[Nagata (1952)]{Na1},  \cite[Nagata (1955)]{Na2}, and \cite[Ribenboim (1956)]{Rib}). This
proves that a $v$--domain may not be essential.  It would be  desirable  to have an
example of a nonessential $v$--domain that is simpler than Nagata's example.

$\bullet$ Irreversibility of $\Rightarrow _{6}$   (even in the Noetherian case).    This case can be
handled in the same manner as that of $\Rightarrow _{1}$, since a polynomial domain over a GCD domain is still a GCD domain (cf. \cite[Kaplansky (1970), Exercise 9, page 42]{Kap}). 

$\bullet$ Irreversibility of $\Rightarrow _{7}$   (even in the Noetherian case).   Note that a
Pr\"ufer domain  is a GGCD domain, since a GGCD domain is characterized by the fact that $F^v$ is invertible for all $F \in \f(D)$ \cite[Anderson-Anderson (1979), Theorem 1]{AA}. Moreover,  a Pr\"ufer domain 
$D$ is a B\'ezout domain if and only if $D$ is GCD. In fact,  according to  \cite[Cohn (1968)]{Co} a
Pr\"ufer domain $D$ is B\'ezout if and only if $D$ is a generalization of GCD domains
called a \emph{Schreier domain} (i.e., an integrally closed integral domain whose
group of divisibility is a \emph{Riesz group}, that is  a partially ordered directed group $G$ having the following interpolation property: given $a_1, a_2,..., a_m, b_1, b_2, ..., b_n \in G$ with $a_i \leq b_j, $ there exists $c\in G$ with $a_i \leq c\leq b_j$  see \cite[Cohn (1968)]{Co} and also \cite[D.D. Anderson (2000), Section 3]{Anderson-00}).  Therefore, a Pr\"ufer non-B\'ezout domain (e.g., a Dedekind non principal ideal domain, like $\mathbb Z[i\sqrt{5}]$) shows the irreversibility of $\Rightarrow _{7}$.

$\bullet$ Irreversibility of $\Rightarrow _{8}$.  
From the characterization of GGCD domains recalled in the irreversibility of $\Rightarrow _{7}$  \cite[Anderson-Anderson (1979), Theorem 1]{AA}, it follows that a GGCD domain is a P$v$MD.  More precisely, as we have already observed  just before Proposition \ref{pr:8}, an integral domain  
 $D$ is a GGCD domain if and only if  $D$ is a P$v$MD that is a
locally GCD domain.    Finally, as noted above, there are examples in \cite[Zafrullah (1988)]{Z2} of locally GCD domains that are not P$v$MD's. More explicitly,  let   $\boldsymbol E$ be the \emph{ring of entire
functions} (i.e.,  complex functions that are analytic in the whole plane).  It is well known that $\boldsymbol E$  is a B\'ezout domain and every nonzero non unit $x \in \boldsymbol E $ is uniquely expressible  as an associate of a  ``countable'' product 
$x =  \prod p_i^{e_i}$, 
where $e_i \geq 0$ and $p_i$  is an irreducible function (i.e., a function having a unique root) \cite[Helmer (1940), Theorems 6 and 9]{Helmer}.
 Let $S$ be the multiplicative set of $\boldsymbol E $ generated by the irreducible functions and let $X$ be an indeterninate over $\boldsymbol E $, then $\boldsymbol E  + X{\boldsymbol E}_S[X]$  is a locally GCD domain  that is not a P$v$MD \cite[Zafrullah (1988), Example 2.6 and Proposition 4.1]{Z2}.
 

$\bullet$ Irreversibility of $\Rightarrow _{9}$  (even in the Noetherian case).   This follows easily from the fact 
 that there do exist examples of Krull domains (which we have already observed are locally P$v$MD's)
that are not locally factorial   (e.g., a non-UFD local Noetherian integrally closed domain, like the power series domain $D[\![X]\!]$ constructed in   \cite[Samuel (1961)]{Samuel},  where $D$ is a two dimensional local Noetherian UFD).    As a matter of fact,  a Krull domain which is   a GCD domain      is a UFD, since in a GCD domain, for all $F \in \f(D)$, $F^v$ is principal and so the class group $\Cl(D) = 0$ \cite[Bouvier-Zafrullah (1988), Section 2]{BZ}; on the other hand,  a Krull domain is factorial if and only if $\Cl(D) = 0$ \cite[Fossum (1973), Proposition 6.1]{Fossum}.

$\bullet$ Irreversibility of $\Rightarrow _{10}$.  Let $R$ be an integral domain with quotient field $L$ and let $X$ be an indeterminate over $L$.  By  \cite[Costa-Mott-Zafrullah (1978), Theorem 4.42]{CMZ} $T:=
R+XL[X]$ is a $v$--domain if and only if  $R$ is a $v$--domain. Therefore, if $R$ is not equal to $L$, then obviously $T$ is an example of a $v$--domain
that is not completely integrally closed (the complete integral closure of $T$ is $L[X]$ \cite[Gilmer (1972), Lemma 26.5]{Gi}). This establishes that $\Rightarrow
_{10}$ is not reversible. \\
Note that,  in \cite[Fontana-Gabelli (1996), Section 4]{FoGa} the transfer in pullback diagrams of the P$v$MD property and related properties  is studied.  A characterization of $v$--domains in pullbacks is proved in   \cite[Gabelli-Houston (1997), Theorem 4.15]{GH}.  We summarize these results in the following:

 \begin{theorem}  Let $R$ be an integral domain with quotient field $k$ and let $T$ be an integral domain with a maximal ideal $M$  such that $L:=T/M$ is a field extension of $k$.   Let $ \varphi: T \rightarrow L$ be the canonical projection and consider the following pullback diagram:
 $$
\begin{CD}
D := \varphi^{-1}(R) @> >> R\\
@VVV @ VVV\\
T_1 := \varphi^{-1}(k) @>>> k \\
@VVV @ VVV\\
T@>\varphi >> L
\end{CD}
$$
 \\
 Then,  $D$ is a $v$--domain (respectively, a P$v$MD) if and only if  $k=L$, $T_M$ is a valuation domain and $R$ and $T$ are   $v$--domains (respectively,  P$v$MD's).
 \end{theorem}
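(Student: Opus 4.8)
The plan is to exploit the fact that $v$-domains and P$v$MD's are both characterized by invertibility-type conditions on two-generated ideals (Remark \ref{rk:5}(b) and Remark \ref{intersection-princ}(b), respectively), together with the well-developed machinery for transferring star-operation properties across the classical pullback square $D = R +_{\varphi} M$. First I would treat the P$v$MD case, which is the content of \cite[Fontana-Gabelli (1996)]{FoGa}, and then adapt the argument to $v$-domains following \cite[Gabelli-Houston (1997)]{GH}; the two proofs run in parallel since ``$t$-invertible'' is replaced throughout by ``$v$-invertible''. The diagram decomposes into two elementary pullbacks: $D = \varphi^{-1}(R)$ sitting over $T_1 = \varphi^{-1}(k)$, and $T_1$ sitting over $T$ with residue field extension $k \hookrightarrow L$ being trivial (i.e.\ $T_1 = T$ when $k=L$). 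So the first reduction is to observe that the ``genuine'' pullback content is the step $D \subseteq T_1$ over the field $k$, a conductor-square construction, while the step $T_1 \subseteq T$ only contributes the requirement that $T_M$ be a valuation domain and $k = L$.

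For the forward implication, suppose $D$ is a $v$-domain. The key points are: (1) $D$ is integrally closed (by $\Rightarrow_{11}$ in the excerpt), and in a pullback of the above type integral closedness of $D$ forces $M = MT$, $T_M$ to be integrally closed, and constrains the residue extension; pushing further, the $v$-domain hypothesis should force $T_M$ to be a valuation domain and $k = L$ (otherwise one produces a two-generated ideal of $D$ that is not $v$-invertible, typically using ideals of the form $(a, m)$ with $a \in D$ lying over a nonunit of $R$ and $m \in M$, exploiting that $v$-operations in $D$ can be computed componentwise once $M = MT$ and $k = L$). (2) Once $k = L$, one has the clean formula $M^{-1} = T = M^v$ and, for ideals of $D$ contracted from $R$ or from $T$, the $v$-closure in $D$ relates transparently to the $v$-closure in $R$ and in $T$; this lets one deduce that both $R$ and $T$ are $v$-domains by testing two-generated ideals upstairs and pulling them back down, or conversely lifting. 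Here I would lean on the standard lemmas (in the style of \cite[Fontana-Gabelli (1996)]{FoGa}) describing $A^{-1}$ and $A^v$ for a fractional ideal $A$ of $D = R + M$ in terms of data over $R$ and over $T$.

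For the converse, assume $k = L$, $T_M$ is a valuation domain, and $R$, $T$ are $v$-domains; I want to show $D = R + M$ is a $v$-domain. By Remark \ref{rk:5}(b) it suffices to show every nonzero two-generated ideal $(x,y)$ of $D$ is $v$-invertible. I would split into cases according to whether $(x,y)$ is comparable to the conductor $M$: if $(x,y) \subseteq M$ (up to scaling), then $(x,y)$ is essentially an ideal of $T$ localized appropriately and one uses that $T$ is a $v$-domain together with $T_M$ being a valuation domain; if $(x,y)$ maps to a nonzero ideal of $R = D/M$, one uses that $R$ is a $v$-domain and that, because $M = MT$ and $k = L$, the $v$-closure in $D$ of such an ideal is obtained by combining the $v$-closure of its image in $R$ with the behavior over $T$. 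In each case one checks $(FF^{-1})^v = D$ by the componentwise description.

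The main obstacle I anticipate is the forward direction's rigidity argument: showing that the mere $v$-domain property of $D$ (a condition only about finitely generated ideals, and with the $v$-operation being badly behaved — not of finite type) is strong enough to force the \emph{exact} structural conditions $k = L$ and $T_M$ a valuation domain, rather than something weaker. This is where \cite[Gabelli-Houston (1997), Theorem 4.15]{GH} does the real work, and a careful proof must produce, from any failure of these conditions, an explicit two-generated ideal of $D$ witnessing non-$v$-invertibility; the subtlety is that $v$-invertibility upstairs does not automatically localize well, so one cannot simply reduce to the local case of $T$ at $M$ without justification. The cleanest route is probably to first establish $M = MT$ and $(M:M) = T$ from integral closedness, reducing to the case where $D \subseteq T$ is a conductor square with conductor exactly $M$, and then handle the residue extension $k \subseteq L$ by a dimension/transcendence argument on ideals generated by $1$ together with an element of $T$ whose residue is transcendental (or algebraic) over $k$.
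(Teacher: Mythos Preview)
The paper does not actually prove this theorem: it is a survey, and the statement is simply recorded with attribution to \cite[Fontana-Gabelli (1996), Section 4]{FoGa} for the P$v$MD case and \cite[Gabelli-Houston (1997), Theorem 4.15]{GH} for the $v$-domain case. So there is no ``paper's own proof'' to compare against.

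That said, your outline is faithful to the strategy of those source papers: decompose the diagram into two elementary conductor squares, use integral closedness of $D$ to force $M = MT$ and $(M:M)=T$, then transfer (two-generated) $v$- or $t$-invertibility across the square via the explicit formulas for $A^{-1}$ and $A^v$ in $D = R+M$. Your identification of the delicate point --- that in the forward direction one must manufacture an explicit non-$v$-invertible two-generated ideal from any failure of $k=L$ or of $T_M$ being a valuation domain --- is exactly where the work lies in \cite{GH}, and you correctly flag that the lack of finite type for $v$ prevents a naive localization argument. One small caution: in the converse direction your case split ``$(x,y)\subseteq M$ versus $(x,y)$ maps to a nonzero ideal of $R$'' is the right shape, but in practice one also needs the intermediate lemma that for an ideal $I$ of $D$ with $I \not\subseteq M$, the $v$-closure $I^{v_D}$ decomposes as $J^{v_R} + M$ where $J = \varphi(I)$, and this requires $k=L$ in an essential way; you allude to this but should make it a standalone lemma rather than folding it into the case analysis.
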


 \begin{remark}
 \rm
  Recently, bringing to a sort of  close a lot of
efforts to restate results of \cite[Costa-Mott-Zafrullah (1978)]{CMZ} in terms of very general pullbacks,
 in the paper \cite[Houston-Taylor (2007)]{HT},  the authors use some remarkable techniques to prove   a generalization of   the previous theorem.
 Although   that   paper is not about $v$--domains in particular, 
it does have a few good results on $v$--domains.  One of these results will be recalled   in
 Proposition \ref{Proposition HT2}. Another one, with a pullback flavor, can be stated as follows:
\it Let $I$ be a nonzero ideal of an integral domain $D$ and set $T:=(I:I)$. If $D$ is a $v$--domain (respectively, a P$v$MD) then $T$ is a $v$--domain (respectively, a P$v$MD) \rm \cite[Houston-Taylor (2007), Proposition 2.5]{HT}.
\end{remark}

$\bullet$ Irreversibility of $\Rightarrow _{11}$.
Recall that an integral domain 
$D$ is called a \emph{Mori domain} if $D$ satisfies ACC on its integral
divisorial ideals. According to \cite[Nishimura (1963), Lemma 1]{Nishimura} or   \cite[Querr\'e (1971)]{Q},  $D$ is a Mori domain if and only if for
every nonzero integral ideal $I$ of $\ D$ there is a finitely generated
ideal $J\subseteq I$ such that $J^{v}=I^{v}$ (see also \cite[Barucci (2000)]{Barucci} for an updated survey on Mori domains). Thus,  if $D$ is a Mori domain
then $D$ is CIC (i.e., every nonzero ideal is $v$--invertible) if and only if $D$ is a $v$--domain (i.e., every nonzero finitely generated ideal is $v$--invertible).  On the other hand,  a completely integrally closed
Mori domain is a Krull domain (see for example   \cite[Fossum (1973), Theorem 3.6]{Fossum}).    More precisely,  Mori $v$--domains coincide with Krull domains \cite[Nishimura (1967), Theorem]{Nishi}.   Therefore an integrally closed Mori non Krull domain provides an example of the irreversibility of $\Rightarrow _{11}$.  An explicit example is given next.

It can be shown that, if $k \subseteq L$ is an extension of fields and
if $X$ is an indeterminate over $L$, then $k+XL[X]$ is always a Mori domain (see, for example,
\cite[Gabelli-Houston (1997), Theorem 4.18]{GH}  and references there  to previous papers by V. Barucci and M. Roitman).   It is
easy to see that the complete integral closure of $k+XL[X]$ is precisely  $L[X]$ \cite[Gilmer (1972), Lemma 26.5] {Gi}. Thus
if $k\subsetneqq L$ then $k+XL[X]$ is not completely integrally closed and, as an easy consequence of the definition of
integrality, it is integrally closed if and only if $k$ is algebraically closed in $L$. This shows that
there do exist integrally closed Mori domains that are not Krull. A very explicit example is given by $\overline{\mathbb Q}
+X\mathbb R[X]$,  where $\mathbb R$ is the field of real numbers and $\overline{\mathbb Q}$ is the
algebraic closure of $\mathbb Q$ in $\mathbb R$. 
\end{section}

\begin{section}{$v$--domains and rings of fractions}\label{fractions}

We have already mentioned that,    if $S$ is a multiplicative set of   a P$v$MD $D$,  then $D_{S}$ is still a P$v$MD  \cite[Heinzer-Ohm (1973), Proposition 1.8]{HO}.   The easiest proof of this fact can be given noting that, given $F \in \f(D)$, if $F$ is   $t$--invertible in $D$ then $FD_S$ is $t$--invertible in $D_S$,   where $S$ is a multiplicative set of $D$ \cite[Bouvier-Zafrullah (1988), Lemma 2.6]{BZ}.
 It is natural to ask  if  $D_{S}$ is a $v$--domain when $D$ is a $v$--domain.

The answer is no.
 As a matter of fact an example of an essential domain $D$
with a prime ideal $P$ such that $D_{P}$ is not essential was given in  \cite[Heinzer (1981)]{He}. What is
interesting is that an essential domain is a $v$--domain by Proposition \ref{ess-vdomain}
 and that, in this example, $D_{P}$ is a (non essential) overring of the type $
k+XL[X]_{(X)}=(k+XL[X])_{XL[X]} $, where $L$ is a field and $k$ its subfield
that is algebraically closed in $L$.
 Now, a domain of type  $k+XL[X]_{(X)}$ is an integrally
closed local Mori domain,  see \cite[Gabelli-Houston (1997), Theorem 4.18]{GH}.   In the irreversibility of $\Rightarrow _{11}$, we have also
observed that if a Mori domain   is a $v$--domain then it must be CIC, i.e., a Krull
domain, and hence, in particular,  an essential domain.  Therefore, Heinzer's construction provides an example of an essential ($v$--)domain  $D$ with a prime ideal $P$ such that $D_{P}$ is not a $v$--domain.

Note that a similar situation holds for CIC domains. If $D$ is CIC then it may be
that for some multiplicative set $S$ of $D$ the ring of fractions $D_{S}$ is not a completely integrally
closed domain.  A well known example in this connection is the ring $\boldsymbol E$ of entire
functions. For $ \boldsymbol E$ is a completely integrally closed B\'ezout domain that is
infinite dimensional (see [61 and 62, Henriksen (1952), (1953)],
 \cite[Gilmer (1972), Examples 16-21, pages 146-148]
{Gi} and  \cite[Fontana-Huckaba-Papick (1997), Section 8.1]{FHP}). 
Localizing $\boldsymbol E$ at one of its prime ideals of height greater than
one would give a valuation domain of dimension greater than one, which is
obviously not completely integrally closed \cite[Gilmer (1972), Theorem 17.5]{Gi}.   For another example of a CIC domain that has non--CIC rings of
fractions, look at the integral domain of integer-valued polynomials Int$(\mathbb Z)$ \cite[Anderson-Anderson-Zafrullah (1991), Example 7.7 and the following paragraph at page 127]{AAZ}. (This is a non-B\'ezout Pr\"ufer domain, being
atomic and two-dimensional.)   

Note that these examples,   like other well known examples of
CIC domains with some overring of fractions not CIC, are all such that their
 overrings of fractions are at least $v$--domains (hence, they do not provide further counterexamples to the transfer of the $v$--domain property to the overrings of fractions).   As a matter of fact, the examples that we have in mind are CIC B\'ezout  domains with Krull dimension $\geq 2$ (and polynomial domains over them), constructed using the Krull-Jaffard-Ohm-Heinzer Theorem (for the statement,  a brief history and applications of this theorem see \cite[Gilmer (1972), Theorem 18.6, page 214, page 136, Example 19.12]{Gi}).   Therefore,   it would be instructive to  find an example of a CIC domain whose  overrings of fractions are not all $v$--domains. Slightly more generally, we have the following.  
 
\smallskip
 
 It is well known that if $\{ D_{\lambda} \mid \lambda \in \Lambda \}$  is a family of overrings of $D$ with  
$D=\bigcap_{\lambda \in \Lambda}D_{\lambda }$ and if each $D_{\lambda}$ 
  is a completely integrally closed  (respectively,  integrally closed) domain then so is $D$    (for the completely integrally closed case see for instance  \cite[Gilmer (1972), Exercise 11, page
145]{Gi}; the integrally closed case is a straightforward consequence of the definition).   It is natural to ask if in the above statement ``completely integrally closed/integrally closed
domain''  is replaced by ``$v$--domain'' the statement is still true.

The answer in general is no, because  by Krull's theorem  every integrally closed
integral domain is expressible as an intersection of a family of its
valuation overrings (see e.g. \cite[Gilmer (1972), Theorem 19.8]{Gi}) and of course a
valuation domain is a $v$--domain. But, an integrally closed domain is not necessarily a $v$--domain (see the irreversibility of $\Rightarrow _{11}$).  If however each of $D_{\lambda}$ is a ring
of fractions of $D$, then the answer is yes.   A slightly more general statement is given next.

\begin{proposition}
\label{int-vdomain} Let $\{D_{\lambda } \mid \lambda\in \Lambda\}$ be a family of flat
overrings  of $D$ such that $D=\bigcap_{\lambda\in\Lambda} D_{\lambda }.$ If each of $
D_{\lambda }$ is a $v$--domain then so is $D$.
\end{proposition}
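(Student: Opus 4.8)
The plan is to use the characterization of $v$-domains given by Theorem~\ref{lorenzen}, specifically condition (iii$_{\f}$): an integral domain $D$ is a $v$-domain if and only if $(FF^{-1})^{v_D}=D$ for every $F\in\f(D)$. So fix $F=(a_1,\dots,a_n)\in\f(D)$; I must show $(FF^{-1})^{v_D}=D$, equivalently $\bigl((FF^{-1})^{-1}\bigr)^{-1}=D$, i.e. $(FF^{-1})^{-1}=D$ since $FF^{-1}\subseteq D$ always forces $(FF^{-1})^{-1}\supseteq D$.

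First I would exploit flatness. For a flat overring $T$ of $D$ and any $F\in\f(D)$ one has the standard identities $(F^{-1})T=(FT)^{-1_T}$ (the inverse computed in $T$) and, more generally, $(A:_D B)T=(AT:_T BT)$ when $B$ is finitely generated — these are exactly the facts already invoked in the proof of Proposition~\ref{ess-vdomain} for localizations, and they hold for arbitrary flat overrings. Hence for each $\lambda$,
$$
(FF^{-1})D_\lambda = (FD_\lambda)(F^{-1}D_\lambda) = (FD_\lambda)(FD_\lambda)^{-1_{D_\lambda}}.
$$
Since each $D_\lambda$ is a $v$-domain and $FD_\lambda$ is a nonzero finitely generated ideal of $D_\lambda$, condition (iii$_{\f}$) applied in $D_\lambda$ gives $\bigl((FD_\lambda)(FD_\lambda)^{-1_{D_\lambda}}\bigr)^{-1_{D_\lambda}}=D_\lambda$; in particular $(FF^{-1})D_\lambda$ has the property that any element of $K$ multiplying it into $D_\lambda$ already lies in $D_\lambda$.

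Now I would assemble the global statement. Let $x\in(FF^{-1})^{-1}=(D:FF^{-1})$, so $x\,FF^{-1}\subseteq D\subseteq D_\lambda$ for every $\lambda$. Then $x\in(D_\lambda:(FF^{-1})D_\lambda)$, and by the local $v$-domain property (using that $FD_\lambda$ is finitely generated and a $v$-ideal computation in $D_\lambda$, or directly condition (ii$_{\f}''$) of Remark~\ref{rk:5} applied to $FD_\lambda$) we get $x\in D_\lambda$. Since this holds for all $\lambda$ and $D=\bigcap_\lambda D_\lambda$, we conclude $x\in D$. Therefore $(FF^{-1})^{-1}=D$, hence $(FF^{-1})^{v_D}=D$, and $D$ is a $v$-domain by Theorem~\ref{lorenzen}.

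The main obstacle is making precise the flatness identities $F^{-1}D_\lambda=(FD_\lambda)^{-1_{D_\lambda}}$ and the interchange of inverse with passage to $D_\lambda$ — these are where finite generation of $F$ and flatness of $D_\lambda$ over $D$ are essential, and they are the only nontrivial inputs; everything else is the elementary ``$\bigcap$ of the local conditions'' argument. One should also note at the outset that a flat overring $D_\lambda$ has the same quotient field $K$ as $D$, so the inverses computed in $D_\lambda$ take place inside $K$ and the intersection $D=\bigcap D_\lambda$ makes sense with all modules viewed as $D$-submodules of $K$. An alternative, essentially equivalent, route is to use the ``$v$-version'' characterization in Remark~\ref{intersection-princ}(b) — $D$ is a $v$-domain iff $(a)\cap(b)$ is $v$-invertible for all nonzero $a,b$ — reducing to two-generated ideals and checking $v$-invertibility of $aD\cap bD$ locally via flatness, but the direct argument above is cleaner.
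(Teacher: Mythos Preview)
Your argument is correct and is essentially the paper's own proof unpacked at the element level. The paper packages the same computation via the auxiliary star operation $\boldsymbol{\ast}:=\wedge v_\lambda$ given by $A^{\boldsymbol{\ast}}:=\bigcap_\lambda (AD_\lambda)^{v_\lambda}$, observes that $\boldsymbol{\ast}\le v$, and shows $(FF^{-1})^{\boldsymbol{\ast}}=\bigcap_\lambda\bigl((FD_\lambda)(FD_\lambda)^{-1}\bigr)^{v_\lambda}=\bigcap_\lambda D_\lambda=D$; your elementwise verification that $x\in(FF^{-1})^{-1}$ forces $x\in D_\lambda$ for every $\lambda$ is exactly this identity read off one inclusion at a time, using the same two ingredients (flatness to get $F^{-1}D_\lambda=(FD_\lambda)^{-1_{D_\lambda}}$, and the $v$-domain hypothesis on $D_\lambda$).
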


\vskip -0.3cm\begin{proof}  Let $v_\lambda$ be the $v$--operation on $D_\lambda$ and
let $\boldsymbol{\ast} := \wedge v_\lambda $, be the star operation on $D$ defined by $A\mapsto A^{\boldsymbol{\ast
}}: =\bigcap_\lambda (AD_{\lambda })^{v_\lambda}$, for all $A \in \F(D)$ \cite[D.D. Anderson (1988), Theorem 2]{A}.  To show that $D$ is a $v$--domain it is
sufficient to show that every nonzero finitely generated ideal is $\boldsymbol{\ast}$--invertible (for $\boldsymbol{\ast}  \leq v$ and so, if $F\in \f(D)$ and $(FF^{-1})^{\boldsymbol{\ast} }=D$, then applying
the $v$--operation to both sides we get $(FF^{-1})^{v}=D$).

 Now, we have 
  $$
 \begin{array}{rl}
(FF^{-1})^{\boldsymbol{\ast}  }= &  \bigcap_\lambda ((FF^{-1})D_{\lambda})^{v_\lambda}= \bigcap_\lambda
((FD_{\lambda})(F^{-1}D_{\lambda}))^{v_\lambda} \\
= &    \bigcap_\lambda
((FD_{\lambda})(FD_{\lambda})^{-1})^{v_\lambda} \mbox{ \, (since $D_{\lambda }$ is    $D$--flat  and $F$ is f.g.)} \\
= &    \bigcap_\lambda D_{\lambda}  \mbox{ \; (since  $D_{\lambda }$ is a $v_{\lambda }$--domain) \; } \\
= &   D \,.
\end{array}
$$ \vskip -15pt \end{proof}

\begin{corollary}
\label{Corollary 3} Let $\Delta$ be a nonempty family of prime ideals of $D$ 
such that $D=\bigcap \{D_{P} \mid P \in \Delta\}$. If $D_{P }$
is a $v$--domain  for each $P \in \Delta$, then  $D$ is a $v$--domain.  In particular,  if $D_M$ is a $v$--domain for all $M \in \Max(D)$ (for example, if $D$ is locally  a $v$--domain, i.e., $D_P$ is a $v$--domain for all $P \in \Spec(D)$), then $D$ is a $v$--domain.
\end{corollary}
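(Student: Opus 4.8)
The plan is to deduce Corollary~\ref{Corollary 3} directly from Proposition~\ref{int-vdomain}, since the only gap is that the overrings appearing here are localizations $D_P$ rather than abstract flat overrings. The first step is to recall the standard fact that for any prime ideal $P$ of $D$ the localization $D_P$ is a flat $D$--module (indeed a flat overring); this is classical and needs no argument beyond a citation. Thus the family $\{D_P \mid P \in \Delta\}$ is a family of flat overrings of $D$, and by hypothesis $D = \bigcap\{D_P \mid P \in \Delta\}$ and each $D_P$ is a $v$--domain. Proposition~\ref{int-vdomain} then applies verbatim and yields that $D$ is a $v$--domain.

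For the ``in particular'' clause, I would first treat the case $\Delta = \Max(D)$: it is a standard fact (again just a citation, e.g.\ that $D = \bigcap\{D_M \mid M \in \Max(D)\}$ for any integral domain, which follows since every proper ideal lies in a maximal ideal) that $D = \bigcap\{D_M \mid M \in \Max(D)\}$, so if each $D_M$ is a $v$--domain the previous paragraph gives the conclusion. Finally, if $D$ is locally a $v$--domain, i.e.\ $D_P$ is a $v$--domain for \emph{every} $P \in \Spec(D)$, then in particular $D_M$ is a $v$--domain for every $M \in \Max(D)$, so this case reduces to the one just settled.

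Honestly, there is no real obstacle here: the corollary is a pure specialization of Proposition~\ref{int-vdomain}, and the entire content is the observation that localizations are flat together with the elementary identity $D = \bigcap_{M \in \Max(D)} D_M$. The only thing worth being careful about is that in the ``locally a $v$--domain'' hypothesis one quantifies over all primes, whereas what is actually needed is only the maximal ones; spelling out that the former implies the latter is the one (trivial) logical step to make explicit.

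\begin{proof}
For any prime ideal $P$ of $D$, the localization $D_P$ is a flat overring of $D$ \cite[Gilmer (1972)]{Gi}. Hence $\{D_P \mid P \in \Delta\}$ is a family of flat overrings of $D$ with $D = \bigcap\{D_P \mid P \in \Delta\}$, and by hypothesis each $D_P$ is a $v$--domain; Proposition~\ref{int-vdomain} then gives that $D$ is a $v$--domain. For the last assertion, recall that $D = \bigcap\{D_M \mid M \in \Max(D)\}$, since every proper ideal of $D$ is contained in some maximal ideal. Thus, taking $\Delta = \Max(D)$, if $D_M$ is a $v$--domain for all $M \in \Max(D)$, the first part yields that $D$ is a $v$--domain. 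Finally, if $D$ is locally a $v$--domain, i.e.\ $D_P$ is a $v$--domain for every $P \in \Spec(D)$, then in particular $D_M$ is a $v$--domain for every $M \in \Max(D)$, and the previous case applies.
\end{proof}
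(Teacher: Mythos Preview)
Your proof is correct and is exactly the intended argument: the paper states Corollary~\ref{Corollary 3} as an immediate consequence of Proposition~\ref{int-vdomain}, and the only content is the observation that each $D_P$ is a flat overring of $D$ together with the standard identity $D=\bigcap_{M\in\Max(D)}D_M$. There is nothing to add.
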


  Note that the previous Proposition \ref{int-vdomain} and Corollary \ref{Corollary 3} generalize Proposition \ref{ess-vdomain}, which ensures that an essential domain is a $v$-domain.   Corollary \ref{Corollary 3}   in turn leads to an interesting conclusion concerning the overrings of fractions of a $v$--domain.

\begin{corollary}
\label{Corollary 4} Let $S$ be a multiplicative set in $D$. If $D_{P}$ is a $v$--domain for all prime
ideals $P$ of $D$ such that $P$ is maximal with respect to being disjoint from $S$,  
 then $D_{S}$ is a $v$--domain.
\end{corollary}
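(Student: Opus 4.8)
The plan is to deduce this immediately from Corollary~\ref{Corollary 3} applied to the ring $D_S$ in place of $D$. The classical facts I will use are: (i) the assignment $P \mapsto PD_S$ is an order-preserving bijection from $\{P \in \Spec(D) \mid P \cap S = \emptyset\}$ onto $\Spec(D_S)$, so that $\Max(D_S) = \{PD_S \mid P \text{ is maximal with respect to being disjoint from } S\}$; and (ii) localization is transitive, so for such a prime $P$ (note that $S \subseteq D \setminus P$) there is a canonical isomorphism $(D_S)_{PD_S} \cong D_P$, both sides being the localization of $D$ at the multiplicative set $D \setminus P$.

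First I would combine (i) and (ii) to rephrase the hypothesis: the statement ``$D_P$ is a $v$--domain for every prime $P$ of $D$ maximal with respect to being disjoint from $S$'' is exactly the statement that $(D_S)_N$ is a $v$--domain for every $N \in \Max(D_S)$. Next, since $D_S$ is an integral domain with quotient field $K$, one has the standard representation $D_S = \bigcap\{(D_S)_N \mid N \in \Max(D_S)\}$, the intersection being taken inside $K$. Then I would apply Corollary~\ref{Corollary 3} with $D_S$ playing the role of $D$ and $\Delta = \Max(D_S)$: by the previous step each localization $(D_S)_N$ is a $v$--domain, hence $D_S$ is a $v$--domain, which is the desired conclusion. (Equivalently, one could invoke Proposition~\ref{int-vdomain} directly, since the localizations $(D_S)_N$ are flat overrings of $D_S$.)

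I do not expect a genuine obstacle here: all the substance is already packaged in Corollary~\ref{Corollary 3}, and the only point requiring care is the bookkeeping in (i) and (ii)---specifically, that the primes of $D$ arising as centers of the maximal ideals of $D_S$ are precisely those maximal among the primes disjoint from $S$, and that $(D_S)_{PD_S}$ recovers $D_P$ on the nose. Both are routine commutative algebra, so the proof is short.
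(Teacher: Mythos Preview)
Your proposal is correct and follows exactly the approach the paper intends: the paper gives no explicit proof of Corollary~\ref{Corollary 4}, merely stating that it is a consequence of Corollary~\ref{Corollary 3}, and your argument---identifying $\Max(D_S)$ with the primes of $D$ maximal with respect to being disjoint from $S$ via $(D_S)_{PD_S}\cong D_P$, then applying Corollary~\ref{Corollary 3} to $D_S$---is precisely that consequence spelled out.
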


In  Corollary \ref
{Corollary 3}  we have shown that, if
$D_{M}$ is a $v$--domain for all $M \in \Max(D)$, then $D$ is a $v$--domain. However,  if
$D_{P}$ is a $v$--domain for all $P \in \Spec(D)$, we get much more in return. 
To indicate this,  we note that,
   if $S$ is a multiplicative set of $D$, then $D_{S}= \bigcap \{D_{Q} \mid 
 Q $  ranges over associated primes of principal ideals  of $D$ with $Q\cap
S=\emptyset   \}$     
\cite[Brewer-Heinzer (1974), Proposition 4]{BH}  (the definition of associated primes of principal ideals was recalled in  Remark \ref{rk:3}).  Indeed, if we let $S=\{1\}$, then we have $D=\bigcap D_{Q}\mid 
Q$ ranges over all associated primes of principal ideals of $D \}$ (see also \cite[Kaplansky (1970), Theorem 53]{Kap} for a ``maximal-type'' version of this property). Using this
terminology and the information at hand, it is easy to prove the following
result.

\begin{proposition}
\label{Proposition 5} Let $D$ be an integral domain. Then,  the following are
equivalent. 
\begin{itemize}
\item[\rm (i)] \hskip 10pt $D$ is a $v$-domain such that, for every multiplicative set 
$S$ of $D$,  $D_{S}$ is a $v$--domain.
\item[\rm (ii)] \hskip 10pt For every nonzero prime ideal $P$ of $D$, $
D_{P}$ is a $v$-domain.
\item[\rm (iii)]  \hskip 10pt For every associated prime of principal ideals  of $D$,  $Q$, $D_{Q}$
is a $v$--domain.
\end{itemize}
\end{proposition}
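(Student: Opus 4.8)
The plan is to prove the cycle of implications (i)$\Rightarrow$(ii)$\Rightarrow$(iii)$\Rightarrow$(i). Two of these are essentially immediate from results already at hand. The implication (i)$\Rightarrow$(ii) is trivial: if $D_S$ is a $v$--domain for \emph{every} multiplicative set $S$, then in particular, taking $S = D \setminus P$ for a nonzero prime $P$, the localization $D_P$ is a $v$--domain. The implication (ii)$\Rightarrow$(iii) is also trivial, since every associated prime $Q$ of a principal ideal of $D$ is in particular a nonzero prime ideal of $D$, so $D_Q$ is a $v$--domain by (ii). The content of the proposition is therefore concentrated in the implication (iii)$\Rightarrow$(i).

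\textbf{The main step: (iii)$\Rightarrow$(i).} Assume $D_Q$ is a $v$--domain for every associated prime $Q$ of a principal ideal of $D$. First I would show $D$ itself is a $v$--domain: by the Brewer--Heinzer representation recalled just before the statement, applied with $S = \{1\}$, we have $D = \bigcap \{ D_Q \mid Q \text{ associated prime of a principal ideal of } D\}$. Since localizations are flat overrings, Proposition~\ref{int-vdomain} (or equivalently Corollary~\ref{Corollary 3}) applies directly and gives that $D$ is a $v$--domain. Next, fix an arbitrary multiplicative set $S$ of $D$; I must show $D_S$ is a $v$--domain. Again by Brewer--Heinzer, $D_S = \bigcap \{ D_Q \mid Q \text{ an associated prime of a principal ideal of } D \text{ with } Q \cap S = \emptyset \}$. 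Each such $D_Q$ is a $v$--domain by hypothesis (iii). Moreover each $D_Q$ is a flat overring of $D_S$ (being a localization of $D$ at a prime disjoint from $S$, hence a localization of $D_S$ as well), and their intersection is $D_S$. Applying Proposition~\ref{int-vdomain} once more --- this time over the base ring $D_S$ rather than $D$ --- yields that $D_S$ is a $v$--domain, which is exactly condition (i).

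\textbf{Expected obstacle.} The routine parts are genuinely routine; the only point requiring a little care is the \emph{bookkeeping} in the last step: one must check that the primes $Q$ appearing in the Brewer--Heinzer decomposition of $D_S$ really are (contractions-to-$D_S$ of) associated primes of principal ideals, that the localizations $D_Q$ coincide whether one localizes $D$ or $D_S$, and that the hypothesis (iii), stated for associated primes of principal ideals of $D$, actually covers all the $D_Q$ that occur. This is exactly what the cited Brewer--Heinzer proposition \cite[Proposition 4]{BH} delivers, so no essential difficulty arises --- but it is the step where one should be explicit rather than wave hands, since the phrase ``associated prime of a principal ideal'' is not localization-transparent in an obvious way. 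Once that identification is in place, Proposition~\ref{int-vdomain} does all the work, and the three implications close up the cycle.
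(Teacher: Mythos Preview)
Your proposal is correct and follows essentially the same approach the paper intends: the paper does not write out a proof but assembles exactly the ingredients you use (Proposition~\ref{int-vdomain}/Corollary~\ref{Corollary 3} together with the Brewer--Heinzer representation $D_S=\bigcap\{D_Q\mid Q$ an associated prime of a principal ideal with $Q\cap S=\emptyset\}$) and then declares the result easy. Your cycle (i)$\Rightarrow$(ii)$\Rightarrow$(iii)$\Rightarrow$(i) with the substantive work in (iii)$\Rightarrow$(i) is precisely what is expected; the ``bookkeeping'' worry you flag is harmless here, since Brewer--Heinzer is stated for associated primes of principal ideals of $D$ itself, so hypothesis~(iii) applies directly without any translation between $D$ and $D_S$.
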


\smallskip

From the previous considerations, we have the following addition to the existing picture: 
$$
\begin{array}{rl}
\mbox{locally P$v$MD} \; \Rightarrow _{12} &  \mbox{locally $v$--domain} \; \Rightarrow _{13}  \mbox{$v$--domain.}
\end{array}
$$
The example discussed at the beginning of this section shows the irreversibility of  $\Rightarrow _{13}$.   Nagata's example (given for the irreversibility of $\Rightarrow _{5}$)  
of  a one dimensional quasilocal CIC domain that is not a valuation ring  shows also the irreversibility of  $\Rightarrow _{12}$.

\begin{remark} \rm
In the  spirit of Proposition \ref{Proposition 5},  we can make the following statement for CIC
domains: \it  Let $D$ be an integral domain. Then,  the following are
equivalent: 
\begin{itemize}
\item[\rm (i)]  \hskip 10pt $D$ is a CIC domain such that, for every multiplicative
set $S$ of $D$,  $D_{S}$ is CIC.

\item[\rm (ii)]  \hskip 10pt For every nonzero prime ideal $P$ of $D$, $
D_{P} $ is CIC.

\item[\rm (iii)]  \hskip 10pt For every associated prime of a principal ideal  of $D$,  $Q$, $D_{Q}$ is CIC.
\end{itemize} \rm

 At the beginning of this section, we have mentioned the existence of examples of $v$--domains (respectively, CIC domains) having some localization at prime ideals which is not a $v$--domain (respectively, a CIC domain). Therefore, the previous  equivalent properties (like  the equivalent properties of  Proposition \ref{Proposition 5}) are strictly stronger than the property of being   a  CIC domain  (respectively,  $v$--domain).
 
   On the other hand, for the case of   integrally closed domains, the fact that, for every nonzero prime ideal $P$ of $D$, $
D_{P} $ is integrally closed   (or, for every maximal ideal $M$ of $D$, $
D_{M} $ is integrally closed) returns exactly the property that $D$ is integrally closed (i.e., the ``integrally closed property'' is a local property; see, for example, \cite[Atiyah-Macdonald (1969), Proposition 5.13]{AM}).   Note that, more generally, the semistar integral closure is a local property (see for instance \cite[Halter-Koch (2003), Theorem 4.11]{H-K-03}). 
\end{remark}

\medskip

 We
have just observed that a ring of fractions of a $v$--domain may not be a $v$--domain, however there are distinguished classes of overrings for which the ascent of the $v$--domain property is possible.

 Given an extension of integral domains $D \subseteq T$ with the same field of quotients,  $T$ is called \emph{$v$--linked}
(respectively,  \emph{$t$--linked}) \emph{over $D$}  if whenever $I$ is a nonzero
(respectively, finitely generated) ideal of $D$ with $I^{-1}=D$ we have $
(IT)^{-1}=T$.    It is clear that
 $v$--linked implies   $t$--linked and it is not hard to prove that flat overring implies $t$--linked \cite[Dobbs-Houston-Lucas-Zafrullah (1989), Proposition 2.2]{DHLZ}.   Moreover, the complete integral closure and the pseudo-integral closure of an  integral domain $D$ are $t$--linked over $D$  (see \cite[Dobbs-Houston-Lucas-Zafrullah (1989), Proposition 2.2 and Corollary 2.3]{DHLZ} or \cite[Halter-Koch (1997), Corollary 2]{H-K1}).    Examples of $v$--linked extensions can be constructed as follows: take any nonzero ideal $I$ of an integral domain  then the overring $T := (I^v :I^v)$ is a $v$--linked overring of $D$ \cite[Houston-Taylor (2007), Lemma 3.3]{HT}.  
 
 The $t$--linked extensions were used in \cite[Dobbs-Houston-Lucas-Zafrullah (1989)]{DHLZ} to deepen the study of P$v$MD's. 
It is known that \it  an integral domain $D$ is a P$v$MD  if and only if each $t$--linked overring   of $D$   is a P$v$MD \rm  (see \cite[Houston (1986), Proposition 1.6]{Houston-86},  \cite[Kang (1989), Theorem 3.8 and Corollary 3.9]{Kang}). More generally, in  \cite[Dobbs-Houston-Lucas-Zafrullah (1989), Theorem 2.10]{DHLZ}, the authors  prove that \it  an integral domain $D$ is a P$v$MD if and only if each $t$--linked overring is integrally closed. \rm
On the other hand, a ring of fractions of a $v$--domain may not be a $v$--domain,
  so a $t$--linked overring of a $v$--domain
may not be a $v$--domain. However, when it comes to a $v$--linked overring we get a
different story. The following result is proven in \cite[Houston-Taylor (2007), Lemma 2.4]{HT}.

\begin{proposition}\label{Proposition HT2}
 If $D$ is a $v$--domain and $T$
is a $v$--linked overring of $D,$ then $T$ is a $v$-domain. \end{proposition}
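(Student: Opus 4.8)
The plan is to show that $T$ is a $v$--domain by exploiting the characterization in Theorem~\ref{lorenzen}, namely that $D$ (resp. $T$) is a $v$--domain if and only if $(FF^{-1})^{v}=D$ (resp. $T$) for every finitely generated nonzero fractional ideal $F$. So let $G$ be a nonzero finitely generated fractional ideal of $T$, say $G = (g_1,\dots,g_n)T$ with $g_i$ in the common quotient field $K$. The key idea is to descend to $D$: clear denominators so that, after multiplying by a suitable nonzero element of $D$, we may assume $G = FT$ where $F := (g_1,\dots,g_n)D \in \f(D)$ (this reduction is harmless by property $(\ast_1)$ of the $v$--operation, since $(xG)^{v_T}=xG^{v_T}$). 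Now I want to compare $(FT)^{-1}$ computed in $T$ with $(F^{-1})T$, using the hypothesis that $T$ is $v$--linked over $D$.

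First I would establish the crucial lemma that for $F \in \f(D)$ one has $(FT)^{-1_T} = (F^{-1}D)T = F^{-1}T$ when $T$ is $v$--linked over $D$. The inclusion $\supseteq$ is automatic: if $zF \subseteq D$ then $zFT \subseteq T$. For the reverse inclusion, the standard trick is to reduce to the case $F^{-1} = D$: given arbitrary $F \in \f(D)$, note that $F^{v} \in \F(D)$ and $(F^{-1})^{-1} = F^v$, and one can use the identity $(FF^{-1})^{-1} = (F^{v}F^{-1})^{-1}$ together with the $v$--domain hypothesis on $D$, which gives $(FF^{-1})^{v} = D$, i.e. $FF^{-1}$ has $v$--closure $D$ hence $(FF^{-1})^{-1} = D$. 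Since $FF^{-1}$ is (up to a denominator) a finitely generated ideal of $D$ with inverse $D$, $v$--linkedness of $T$ yields $(FF^{-1}T)^{-1_T} = T$; unwinding this and using that $F$ is finitely generated so that $(FT)^{-1_T}(FT) \subseteq T$, one extracts $(FT)^{-1_T} = F^{-1}T$. (Here one must be a little careful that $FF^{-1}$, though an integral ideal of $D$ only after scaling, is finitely generated as an ideal of $D$; this is where ``$F$ finitely generated'' is used, since $F^{-1}$ need not be.)

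Granting the lemma, the computation finishes quickly. We have
\begin{align*}
(GG^{-1})^{v_T} = (FT\cdot (FT)^{-1_T})^{v_T} = (FT \cdot F^{-1}T)^{v_T} = ((FF^{-1})T)^{v_T}.
\end{align*}
Since $D$ is a $v$--domain, $(FF^{-1})^{v_D} = D$, so $FF^{-1}$ is a $D$--submodule of $K$ whose $v_D$--closure is all of $D$; applying $v$--linkedness once more (or directly the lemma with the ideal $FF^{-1}$, whose inverse in $D$ is $D$) gives $((FF^{-1})T)^{-1_T} = T$, hence $((FF^{-1})T)^{v_T} = ((FF^{-1})T)^{-1_T{-1_T}} = T$. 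Therefore $(GG^{-1})^{v_T} = T$, and since $G$ was an arbitrary nonzero finitely generated fractional ideal of $T$, Theorem~\ref{lorenzen} shows $T$ is a $v$--domain.

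The main obstacle I anticipate is the lemma $(FT)^{-1_T} = F^{-1}T$: the inclusion $\subseteq$ genuinely uses both that $F$ is finitely generated (so that forming inverses commutes nicely) and the $v$--linked hypothesis, and the reduction from ``arbitrary $F$'' to ``$F$ with $F^{-1}=D$'' must be done through the ideal $FF^{-1}$, keeping track of the fact that $v$--linkedness as defined applies to nonzero (not necessarily finitely generated) ideals $I$ of $D$ with $I^{-1}=D$, which is exactly the generality needed here since $FF^{-1}$ need not be finitely generated. Getting this bookkeeping right — and making sure one never needs $F^{-1}$ itself to be finitely generated, which would force $D$ to be $v$--coherent hence a P$v$MD — is the delicate point.
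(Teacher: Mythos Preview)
Your approach is essentially the same as the paper's, but you make it harder than necessary by aiming for the full equality $(FT)^{-1_T}=F^{-1}T$. In fact that equality is not quite right as stated: what follows from $(FF^{-1}T)^{v_T}=T$ is only $(FT)^{-1_T}=(F^{-1}T)^{v_T}$, and your ``unwinding'' step does not actually establish the stronger statement without the $v_T$-closure. Fortunately this does not matter for the final computation, since $(FT\cdot (F^{-1}T)^{v_T})^{v_T}=(FT\cdot F^{-1}T)^{v_T}$ anyway.

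But the cleaner observation---which is exactly what the paper does---is that you only need the \emph{trivial} inclusion $F^{-1}T\subseteq (T:FT)=G^{-1}$. From this you get
\[
(FF^{-1}T)^{v_T}\ \subseteq\ (G\cdot G^{-1})^{v_T}\ \subseteq\ T,
\]
and since $D$ is a $v$--domain $(FF^{-1})^{-1}=D$, so $v$--linkedness gives $(FF^{-1}T)^{v_T}=T$, forcing $(GG^{-1})^{v_T}=T$. No lemma, no bookkeeping about whether $FF^{-1}$ is finitely generated (it need not be; $v$--linkedness applies to arbitrary ideals with trivial inverse, as you correctly note), and no worry about the reverse inclusion in $(FT)^{-1_T}\supseteq F^{-1}T$. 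Your ``main obstacle'' simply disappears once you realize only one direction is needed.
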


\vskip -0.3cm \begin{proof} Let $J:= y_1T + y_2T + ... +y_nT$ be a nonzero finitely generated ideal of $T$ and set $F := y_1D + y_2D + ... +y_nD \in \f(D)$. Since $D$ is a $v$--domain, $(FF^{-1})^v = D$ and, since $T$ is $v$--linked, we have $(JF^{-1}T)^v =(FF^{-1}T)^v = T$. We conclude easily that $(J(T:J))^v =T$.
\end{proof}

\end{section}

\begin{section}{$v$--domains and polynomial extensions} \label{poly}

\noindent \bf 4.a The polynomial ring over a $v$--domain. \rm As for the case of integrally closed domains and  of completely integrally closed domains   \cite[Gilmer (1972), Corollary 10.8 and Theorem 13.9]{Gi},   we have observed in the proof of irreversibility of $
\Rightarrow _{1}$  that, \it  given an integral domain $D$ and an indeterminate $X$ over $D$,   
$$D[X]  \mbox{ is a P$v$MD} \; \Leftrightarrow  \; D   \mbox{ is a P$v$MD. }
$$  
\rm A similar statement holds for $v$--domains. As a matter of fact, \emph{the following statements are equivalent} (see part (4) of  \cite[D.D. Anderson-Kwak-Zafrullah (1995), Corollary 3.3]{AKZ}).
\it
\begin{itemize}
\item[\rm (i)] \hskip 10pt  For every $F \in \f(D)$, $F^v$ is $v$--invertible in $D$.
\item[\rm(ii)] \hskip 10pt For every $G \in \f(D[X])$, $G^v$ is $v$--invertible in $D[X]$.
\end{itemize}
\rm

This equi\-valence is essentially based on a polynomial characterization of integrally closed domains given in \cite[Querr\'e (1980)]{Querre}, for which we need some   introduction.   Given an integral domain $D$ with quotient field $K$, an indeterminate $X$ over $K$ and a polynomial $f \in K[X]$, we denote by $\co_D(f)$ the \emph{content of $f$}, i.e., the (fractional) ideal  of $D$ generated by the coefficients of $
f$. For every fractional ideal $B$ of $D[X]$,   set $\co_D(B) := (\co_D(f) \mid f \in B)$. 
The integrally closed domains are characterized by the following property: for each integral ideal $J$ of $D[X]$ such that $J \cap D \neq (0)$, $J^v = (\co_D(J)[X])^v = \co_D(J)^v[X]$  (see  \cite[Querr\'e (1980), Section 3]{Querre} and \cite[D.D. Anderson-Kwak-Zafrullah (1995), Theorem 3.1]{AKZ}). 
 Moreover,  an integrally closed domain is an \emph{agreeable domain} (i.e., for each fractional ideal $B$ of $D[X]$, with $B \subseteq K[X]$, there exists $0 \neq s \in D$ --depending on $B$-- with $sB \subseteq D$) \cite[D.D. Anderson-Kwak-Zafrullah (1995), Theorem 2.2]{AKZ}.    (Note that
 agreeable domains were also studied in \cite[Hamann-Houston-Johnson (1988)]{HHJ}  under the name of  almost principal ideal domains.)

 The previous considerations show  that,  for   an integrally closed domain $D$, there is a    close relation   between the divisorial ideals    of $D[X]$ and those of $D$  \cite[Querr\'e (1980), Th\'eor\`eme 1 and Remarque 1]{Querre}.
 The equivalence (i)$\Leftrightarrow$(ii) will now follow easily from the fact that, given an agreeable domain, for every integral ideal $J$ of $D[X]$, there exist an integral ideal $J_1$ of $D[X]$ with $J_1 \cap D \neq (0)$, a nonzero element $d \in D$ and a polynomial $f \in D[X]$ in such a way that $J = d^{-1}fJ_1$ \cite[D.D. Anderson-Kwak-Zafrullah (1995), Theorem 2.1]{AKZ}.

On the other hand, using the definitions of $v$--invertibility and   $v $--multiplica\-ti\-on, one can easily show that for $A\in \F(D),$ $A$ is $v$--invertible if and only if $A^{v}$ is $v$--invertible.   By the previous equivalence (i)$\Leftrightarrow$(ii),  we conclude that every $F\in \f(D)$ is $v$--invertible if and only if every $
G\in \f(D[X])$ is $v$--invertible and this proves the following:

\begin{theorem}  Given an integral domain $D$ and an indeterminate $X$ over $D$,  $D$ is a $v$-domain if and only if $
D[X]$ is a $v$--domain. 
\end{theorem}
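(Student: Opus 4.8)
The plan is to deduce the theorem formally from two facts recorded in the paragraphs just above: the equivalence (i)$\Leftrightarrow$(ii) relating $v$-invertibility of divisorial ideals of finite type in $D$ and in $D[X]$, and the observation that, for $A\in\F(D)$, $A$ is $v$-invertible if and only if $A^v$ is $v$-invertible. So essentially the only thing to do is to rephrase the statements ``$D$ is a $v$-domain'' and ``$D[X]$ is a $v$-domain'' in terms of conditions (i) and (ii), and then quote the equivalence.

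First I would record, for any $F\in\f(D)$, that $(F^v)^{-1}=F^{-1}$: if $xF\subseteq D$ with $0\neq x\in K$ then $xF^v=(xF)^v\subseteq D^v=D$ by property $(\ast_1)$ of the $v$-operation, and the reverse inclusion $(D:F^v)\subseteq(D:F)$ is immediate from $F\subseteq F^v$. Combining this with the standard identity $(AB)^v=(A^vB)^v$ \cite[Gilmer (1972), Theorem 34.1]{Gi} gives $(FF^{-1})^v=(F^vF^{-1})^v=(F^v(F^v)^{-1})^v$, so $F$ is $v$-invertible precisely when $F^v$ is $v$-invertible (this is the fact ``one can easily show'' quoted in the excerpt). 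Hence $D$ is a $v$-domain, i.e. $(FF^{-1})^v=D$ for every $F\in\f(D)$, exactly when $F^v$ is $v$-invertible in $D$ for every $F\in\f(D)$, which is condition (i). Running the very same argument over $D[X]$ shows that $D[X]$ is a $v$-domain exactly when $G^v$ is $v$-invertible in $D[X]$ for every $G\in\f(D[X])$, which is condition (ii).

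Finally I would invoke the equivalence (i)$\Leftrightarrow$(ii): by the previous step, (i) is just a restatement of ``$D$ is a $v$-domain'' and (ii) is just a restatement of ``$D[X]$ is a $v$-domain'', so $D$ is a $v$-domain if and only if $D[X]$ is a $v$-domain, and the proof is complete. No further computation is required. The substantive content --- and the only place any real difficulty lies --- is the equivalence (i)$\Leftrightarrow$(ii) itself, which rests on Querr\'e's content-ideal description of the divisorial ideals $J$ of $D[X]$ with $J\cap D\neq(0)$ over an integrally closed (hence agreeable) base, together with the reduction of an arbitrary integral ideal of $D[X]$ to the form $d^{-1}fJ_1$ with $J_1\cap D\neq(0)$. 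That equivalence is exactly what we are entitled to assume here, so the theorem follows at once from the two bookkeeping steps above; were one forced to prove everything from scratch, the content-ideal analysis underlying (i)$\Leftrightarrow$(ii) would be the main obstacle.
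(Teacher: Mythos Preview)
Your proposal is correct and follows exactly the paper's own argument: reduce ``$D$ is a $v$--domain'' and ``$D[X]$ is a $v$--domain'' to conditions (i) and (ii) via the easy fact that $A$ is $v$--invertible iff $A^v$ is, and then invoke the equivalence (i)$\Leftrightarrow$(ii). Your added verification that $(F^v)^{-1}=F^{-1}$ and hence $(FF^{-1})^v=(F^v(F^v)^{-1})^v$ is a welcome spelling-out of what the paper leaves as ``one can easily show''.
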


Note that a much more interesting and general result was proved in
terms of pseudo-integral closures in \cite[D.F. Anderson-Houston-Zafrullah (1991), Theorem 1.5 and Corollary 1.6]{AHZ}.

\bigskip

\noindent \bf 4.b $v$--domains and rational functions. \rm Characterizations of $v$--domains can be also given in terms of rational functions,  using  properties of the content of polynomials.

Recall that Gauss' Lemma for the content of polynomials holds for Dedekind domains (or,
more generally, for Pr\"{u}fer domains).  A more precise and general
statement is given next.
 
\begin{lemma} \label{gauss} Let $D$  be an integral domain with quotient field $K$ and let   $X$ be an indeterminate over $D$. The following are equivalent. 
\begin{itemize}
\item[\rm (i)] \hskip 10pt $D$ is an integrally closed domain (respectively,  a P$v$MD; \ a Pr\"ufer domain).
\item[\rm (ii)] \hskip 10pt for all nonzero $f,g \in K[X]$, $\co_D(fg)^v = (\co_D(f)\co_D(g))^v$ (respectively, $\co_D(fg)^w  $ $= (\co_D(f)\co_D(g))^w$; \  $\co_D(fg) = \co_D(f)\co_D(g)$).
\end{itemize}
\end{lemma}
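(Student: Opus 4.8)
\textbf{Proof proposal for Lemma \ref{gauss}.}

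The plan is to treat the three parallel statements by first isolating the one general containment that holds over any domain, and then proving the reverse containment in each case separately using the appropriate localization family. First I would record the elementary inclusion $\co_D(fg) \subseteq \co_D(f)\co_D(g)$, valid in any commutative ring (each coefficient of $fg$ is a sum of products of a coefficient of $f$ and a coefficient of $g$); applying any star operation $\ast$ monotonically gives $\co_D(fg)^\ast \subseteq (\co_D(f)\co_D(g))^\ast$. Thus in all three rows the content-in direction is free, and the work is the reverse. For the Pr\"ufer case, (i)$\Rightarrow$(ii) is the classical Gauss Lemma: if $D$ is Pr\"ufer then $\co_D(f)$, $\co_D(g)$ are invertible (finitely generated, locally principal), and the Dedekind--Mertens lemma, $\co_D(f)^{m+1}\co_D(g) = \co_D(f)^m\co_D(fg)$ for $m = \deg g$, together with cancellability of invertible ideals gives $\co_D(f)\co_D(g) = \co_D(fg)$ exactly. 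The converse (ii)$\Rightarrow$(i) for Pr\"ufer follows by taking $f = a + bX$, $g = a - bX$ (or a similar device) to force every two-generated ideal $(a,b)$ to be invertible, hence $D$ Pr\"ufer by Pr\"ufer's theorem recalled in Remark \ref{intersection-princ}(b).

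For the P$v$MD row, the natural route is to localize at the maximal $t$-ideals (equivalently maximal $w$-ideals). Since $\co_D$ and content behave well under localization --- $\co_{D_M}(f) = \co_D(f)D_M$ --- and since $D$ is a P$v$MD iff $D_M$ is a valuation domain for every maximal $t$-ideal $M$ (Griffin's theorem, \cite[Griffin (1967), Theorem 5]{Gr}), the equality $\co_D(fg)^w = (\co_D(f)\co_D(g))^w$ reduces, via $A^w = \bigcap_{M\in\Max^t(D)} AD_M$, to checking $\co_{D_M}(fg) = \co_{D_M}(f)\co_{D_M}(g)$ in each valuation overring $D_M$. Over a valuation domain the content of a polynomial is just the principal ideal generated by the coefficient of smallest value, and Gauss' Lemma holds on the nose there. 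For the converse, again specialize to $f = a + bX$ type polynomials to deduce that $(a,b)$ is $t$-invertible (equivalently $w$-invertible) for all nonzero $a,b$, which forces $D$ to be a P$v$MD by the two-generated characterization in Remark \ref{rk:5}(b)--(c).

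For the integrally closed row, the cleanest approach is to invoke the content formula for divisorial ideals in $D[X]$: for an integrally closed domain one has $J^v = \co_D(J)^v[X]$ for ideals $J$ of $D[X]$ meeting $D$ nontrivially, a result of Querr\'e recalled in Section \ref{poly}. Applying this to the ideal $J = (f)(g) \cap D[X]$-type object, or more directly using $(fg D[X])^v$ versus $(f D[X])^v (g D[X])^v$ and the fact that $(hD[X])^v$ is controlled by $\co_D(h)^v[X]$, yields $\co_D(fg)^v = (\co_D(f)\co_D(g))^v$. Conversely, if the content-$v$ formula holds, then taking $f,g$ linear shows each two-generated ideal $(a,b)$ satisfies the kind of $v$-identity that, combined with \cite[Gilmer (1972), Proposition 34.7]{Gi}, gives integral closedness; alternatively one shows directly that an element $u\in K$ integral over $D$ would violate the content equality for a suitable pair of polynomials built from its minimal equation.

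The main obstacle I anticipate is the integrally-closed case, specifically the passage from the abstract Querr\'e content formula for $D[X]$-ideals to the bare statement about $\co_D(fg)$ versus $\co_D(f)\co_D(g)$: one has to be careful that $(fgD[X])^v$ genuinely computes $\co_D(fg)^v[X]$ and that intersecting back down to $D$ (reading off the ``$X^0$ part'') is compatible with the $v$-operation --- this is where the agreeable-domain machinery of \cite[D.D. Anderson-Kwak-Zafrullah (1995)]{AKZ} is really needed, and where a naive argument would go wrong. The Pr\"ufer and P$v$MD cases, by contrast, are essentially localization bookkeeping on top of the valuation-domain Gauss Lemma, so I expect them to be routine once the framework is set up.
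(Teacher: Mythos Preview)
The paper does not give its own proof of this lemma; immediately after the statement it simply cites references (Gilmer and Tsang for the Pr\"ufer part, Krull and Querr\'e for the integrally closed part, Anderson--Fontana--Zafrullah and Chang for the P$v$MD part). So there is no in-paper argument to compare against, but your sketch can still be assessed on its own merits.

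Your Pr\"ufer and P$v$MD arguments follow the standard lines and are essentially fine: Dedekind--Mertens plus cancellation by an invertible content ideal for Pr\"ufer, and localization at maximal $t$-ideals (where the content formula reduces to Gauss' Lemma over a valuation domain, and $A^{w}=\bigcap_{M\in\Max^t(D)}AD_M$) for the P$v$MD case. The converses via two-generated ideals need a little more care in the choice of the auxiliary polynomials than you indicate, but the strategy is sound.

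The real gap is in the integrally closed case, direction (i)$\Rightarrow$(ii). Your proposed route through Querr\'e's formula $J^{v}=\co_D(J)^{v}[X]$ does not apply as stated: that result is for ideals $J\subseteq D[X]$ with $J\cap D\neq(0)$, while a principal ideal $fD[X]$ with $f$ nonconstant meets $D$ only in $0$; and in any case principal ideals are already divisorial, so $(fD[X])^{v}=fD[X]$ tells you nothing about $\co_D(f)^{v}$. More seriously, Querr\'e's Lemme~1 (the description of $fK[X]\cap D[X]$ as $f\co_D(f)^{-1}[X]$) is itself proved via the Prague-type statement ``$fg\in D[X]\Rightarrow\co_D(f)\co_D(g)\subseteq D$'', which is exactly what you need, so the argument risks being circular. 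The clean classical route (this is Krull's) is direct: for $x\in K$ one has $x\in\co_D(fg)^{-1}$ iff $xfg\in D[X]$; by Dedekind's Prague theorem, if $D$ is integrally closed and $h_1h_2\in D[X]$ with $h_i\in K[X]$, then every product of a coefficient of $h_1$ with a coefficient of $h_2$ lies in $D$ (each such product is integral over the subring generated by the coefficients of $h_1h_2$). Applying this to $h_1=f$, $h_2=xg$ gives $x\,\co_D(f)\co_D(g)\subseteq D$, whence $\co_D(fg)^{-1}=(\co_D(f)\co_D(g))^{-1}$ and the $v$-closures agree. This replaces your Querr\'e detour entirely and resolves precisely the obstacle you flagged.
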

\noindent For the ``Pr\"ufer domain part'' of the previous lemma, see \cite[Gilmer (1972), Corollary 28.5]{Gi},  \cite[Tsang (1965)]{Tsang}, and \cite[Gilmer (1967)]{Gi-67};   for the ``integrally closed domains part'', see \cite[Krull (1936), page 557]{Krull:1936}  and \cite[Querr\'e (1980), Lemme 1]{Querre}; for the ``P$v$MD's part'', see \cite[D.F. Anderson-Fontana-Zafrullah (2008), Corollary 1.6]{AFZ}   and \cite[Chang (2008), Corollary 3.8]{C-08}. 
 For more on the history of Gauss' Lemma, the reader may consult   \cite[Heinzer-Huneke (1998), page 1306]{H-H}    and  \cite[D.D. Anderson (2000), Section 8]{Anderson-00}.

For general integral domains, we always have the inclusion of
ideals $ \boldsymbol{c}_{D}(fg)\subseteq
\boldsymbol{c}_{D}(f)\boldsymbol{c}_{D}(g)$, and, more precisely, we
have the following famous lemma due to Dedekind and Mertens (for the proof, see   \cite[Northcott (1959)]{Northcott}   or \cite[Gilmer (1972), Theorem
28.1]{Gi} and, for some complementary information, see \cite[D.D. Anderson (2000), Section 8]{Anderson-00}): 

\begin{lemma}  In the situation of Lemma \ref{gauss}, let $ 0 \ne f,g \in K[X]$ and let $m:=\deg (g)$.
Then
\begin{equation*}
\boldsymbol{c}_{D}(f)^{m}\boldsymbol{c}_{D}(fg)=\boldsymbol{c}_{D}(f)^{m+1}
\boldsymbol{c}_{D}(g)\,.
\end{equation*} \rm
\end{lemma}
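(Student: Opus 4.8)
The plan is to reduce the statement to a single determinantal identity. One inclusion is essentially free: as already noted, $\boldsymbol{c}_D(fg)\subseteq\boldsymbol{c}_D(f)\boldsymbol{c}_D(g)$, and multiplying through by $\boldsymbol{c}_D(f)^{m}$ gives $\boldsymbol{c}_D(f)^{m}\boldsymbol{c}_D(fg)\subseteq\boldsymbol{c}_D(f)^{m+1}\boldsymbol{c}_D(g)$. So the content of the lemma is the reverse inclusion
$$\boldsymbol{c}_D(f)^{m+1}\boldsymbol{c}_D(g)\subseteq\boldsymbol{c}_D(f)^{m}\boldsymbol{c}_D(fg),$$
and since $\boldsymbol{c}_D(g)$ is generated by the coefficients $b_0,\dots,b_m$ of $g$, it is enough to show $b_j\,\boldsymbol{c}_D(f)^{m+1}\subseteq\boldsymbol{c}_D(f)^{m}\boldsymbol{c}_D(fg)$ for each $j$. (Clearing denominators, I may assume $f,g\in D[X]$; and since multiplying $f$ or $g$ by a power of $X$ changes neither side, further harmless normalizations of $f,g$ are available if needed.)

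Write $f=\sum_{i=0}^{n}a_iX^i$, $g=\sum_{j=0}^{m}b_jX^j$ and $fg=\sum_{k=0}^{n+m}c_kX^k$. I would record multiplication by $f$ as a matrix: the ``convolution matrix'' $M=\bigl(a_{k-j}\bigr)_{0\le k\le n+m,\ 0\le j\le m}$ (with the convention $a_i=0$ for $i\notin[0,n]$) satisfies $M\,(b_0,\dots,b_m)^{\mathsf T}=(c_0,\dots,c_{n+m})^{\mathsf T}$. Pick any $m+1$ rows $S$ of $M$ and apply Cramer's rule to the $(m+1)\times(m+1)$ submatrix $M_S$: via its adjugate, $\det(M_S)\,b_j$ is a $D$-linear combination of the $c_k$'s ($k\in S$) with coefficients equal to $m\times m$ minors of $M_S$; since every entry of $M$ is some $a_i$ (or $0$), each such minor lies in $\boldsymbol{c}_D(f)^{m}$. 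Hence $\det(M_S)\,b_j\in\boldsymbol{c}_D(f)^{m}\boldsymbol{c}_D(fg)$ for all $S$ and all $j$; equivalently, writing $I$ for the ideal of $D$ generated by all the maximal $(m+1)$-minors of $M$, we get $I\cdot\boldsymbol{c}_D(g)\subseteq\boldsymbol{c}_D(f)^{m}\boldsymbol{c}_D(fg)$.

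Everything then comes down to the purely determinantal identity
$$I=\boldsymbol{c}_D(f)^{m+1},$$
and I expect the reverse inclusion $\boldsymbol{c}_D(f)^{m+1}\subseteq I$ to be the main obstacle (the inclusion $I\subseteq\boldsymbol{c}_D(f)^{m+1}$ is trivial, since each $(m+1)$-minor is a $D$-combination of products of $m+1$ of the $a_i$). To prove $\boldsymbol{c}_D(f)^{m+1}\subseteq I$, I would show that each generator $a_{i_0}a_{i_1}\cdots a_{i_m}$ (say $i_0\le\cdots\le i_m$) lies in $I$: the minor of $M$ on the rows $r_s=i_s+s$ ($s=0,\dots,m$, which are strictly increasing) expands as $a_{i_0}\cdots a_{i_m}$ plus a sum of products of $m+1$ of the $a_i$'s whose index multisets are strictly ``more spread out'' (the identity permutation minimizes $\sum_s(\text{index})^2$, by the rearrangement inequality), so a descending induction on $\sum_s i_s^2$ — a quantity bounded above by $(m+1)n^2$, the base case $a_n^{\,m+1}$ and more generally the monomials $a_0^{\,p}a_n^{\,q}$ arising directly as block-triangular minors of $M$ — closes the argument. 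Granting $I=\boldsymbol{c}_D(f)^{m+1}$, the displayed reverse inclusion is immediate and the lemma follows. (As an alternative one can avoid matrices altogether and induct on $m=\deg g$, peeling the leading term off $g$ and bookkeeping the content of $fg$ against the contents of its truncations; this is the classical Dedekind--Mertens argument of Northcott and of Gilmer cited above.)
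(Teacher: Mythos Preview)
The paper does not supply its own proof of this lemma; it merely refers the reader to \cite[Northcott (1959)]{Northcott} and \cite[Gilmer (1972), Theorem 28.1]{Gi}. Both of those references establish the identity by an induction on $m=\deg(g)$, peeling off the leading term of $g$ --- exactly the alternative you sketch in your final parenthetical remark.

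Your primary argument, by contrast, is determinantal: you encode multiplication by $f$ as the $(n{+}m{+}1)\times(m{+}1)$ convolution matrix $M$, use Cramer's rule to get $I\cdot\boldsymbol{c}_D(g)\subseteq\boldsymbol{c}_D(f)^{m}\boldsymbol{c}_D(fg)$ with $I$ the ideal of maximal minors of $M$, and then reduce everything to the identity $I=\boldsymbol{c}_D(f)^{m+1}$. Your descending induction on $\sum_s i_s^{\,2}$ for this last step is correct: for a sorted tuple $i_0\le\cdots\le i_m$, the Leibniz expansion of the minor on rows $r_s=i_s+s$ has the identity permutation contributing $a_{i_0}\cdots a_{i_m}$, and for any other $\sigma$ the index multiset $\{i_s+s-\sigma(s)\}$ satisfies
\[
\sum_s\bigl(i_s+s-\sigma(s)\bigr)^2-\sum_s i_s^{\,2}
=2\Bigl(\sum_s i_s s-\sum_s i_s\sigma(s)\Bigr)+\sum_s\bigl(s-\sigma(s)\bigr)^2>0,
\]
the first bracket being $\ge 0$ by rearrangement and the second strictly positive. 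So the induction closes, and the proof is complete. Compared with the cited inductive proof, your route is more structural (it isolates the r\^ole of the Sylvester-type matrix and yields the auxiliary fact $I=\boldsymbol{c}_D(f)^{m+1}$, of independent interest), at the cost of a slightly heavier combinatorial verification; the classical induction is shorter but less illuminating about \emph{why} the exponent $m+1$ appears.
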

\smallskip

A straightforward consequence of the previous lemma is the following:

\begin{corollary} \label{v-inv} In the situation of Lemma \ref{gauss}, assume that, for a nonzero polynomial $f \in K[X]$, $\co_D(f)$ is $v$--invertible (e.g., $t$--invertible). Then \  $\co_D(fg)^v $ $= (\co_D(f)\co_D(g))^v$ (or, equivalently, $\co_D(fg)^t = (\co_D(f)\co_D(g))^t$), for all nonzero $g \in K[X]$.
\end{corollary}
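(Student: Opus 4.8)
The plan is to derive the corollary directly from the Dedekind--Mertens lemma stated just above. Fix a nonzero $f \in K[X]$ with $\co_D(f)$ $v$--invertible, and let $0 \ne g \in K[X]$ with $m := \deg(g)$. The inclusion $\co_D(fg) \subseteq \co_D(f)\co_D(g)$ holds for any integral domain, hence $\co_D(fg)^v \subseteq (\co_D(f)\co_D(g))^v$ by monotonicity and idempotency of $v$. So the whole content of the argument is the reverse inclusion, and here is where $v$--invertibility of $\co_D(f)$ does the work.

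\noindent \textbf{Key steps.} First I would apply the Dedekind--Mertens lemma to get the identity $\co_D(f)^{m}\co_D(fg) = \co_D(f)^{m+1}\co_D(g)$ in $\F(D)$. Next I would apply the $v$--operation to both sides; since $v$ is a star operation, $\left(\co_D(f)^{m}\co_D(fg)\right)^v = \left(\co_D(f)^{m+1}\co_D(g)\right)^v$. The idea now is to ``cancel'' the factor $\co_D(f)^{m}$ from both sides using that $\co_D(f)$ is $v$--invertible: recall from the discussion in Remark \ref{intersection-princ} and the surrounding material that if $H$ is $v$--invertible then $(HA)^v = (HB)^v$ implies $A^v = B^v$ (multiply both sides by $H^{-1}$ and apply $v$, using $(HH^{-1})^v = D$ and the general fact that $\left((HA)H^{-1}\right)^v = \left(H^{-1}(HA)^v\right)^v$-type manipulations valid for star operations). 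A product of $v$--invertible ideals is $v$--invertible, so $\co_D(f)^{m}$ is $v$--invertible; cancelling it from $\left(\co_D(f)^{m}\co_D(fg)\right)^v = \left(\co_D(f)^{m}\,\co_D(f)\co_D(g)\right)^v$ yields $\co_D(fg)^v = (\co_D(f)\co_D(g))^v$, which is exactly the asserted equality. The parenthetical equivalence with the $t$--operation follows because $\co_D(f)$, $\co_D(g)$, $\co_D(fg)$ are finitely generated, so their $v$--closures coincide with their $t$--closures, and $(\co_D(f)\co_D(g))$ is finitely generated as well; the parenthetical ``e.g., $t$--invertible'' hypothesis is justified since $t$--invertibility implies $v$--invertibility (any $\ast_{\!_f}$--invertible ideal is $v$--invertible because $t \le v$).

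\noindent \textbf{Main obstacle.} The only delicate point is the cancellation law for $v$--invertible ideals, i.e.\ verifying carefully that $(HA)^v = (HB)^v$ together with $v$--invertibility of $H$ forces $A^v = B^v$. This is routine but must be done with the correct bracketing: from $(HA)^v = (HB)^v$ one gets $\left(H^{-1}(HA)^v\right)^v = \left(H^{-1}(HB)^v\right)^v$, and then one uses $\left(H^{-1}(HA)^v\right)^v = \left(H^{-1}HA\right)^v = \left((H^{-1}H)^v A\right)^v = \left((HH^{-1})^v A\right)^v = (DA)^v = A^v$, where each equality uses the star operation axioms $(\ast_1)$--$(\ast_3)$ and $v$--invertibility $(HH^{-1})^v = D$. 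Once this lemma is in hand — and it is essentially the statement that the $v$--invertible $v$--ideals form a cancellative monoid — the corollary is immediate. I would state this cancellation fact explicitly (or cite it from the monoid-theoretic references already invoked, e.g.\ \cite{H-K}) rather than re-deriving it inline.
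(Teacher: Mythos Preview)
Your proof is correct and is exactly the argument the paper has in mind: the paper gives no explicit proof, merely stating that the corollary is ``a straightforward consequence of the previous lemma'' (Dedekind--Mertens), and your derivation---apply Dedekind--Mertens, take $v$ of both sides, then cancel the $v$--invertible factor $\co_D(f)^m$---is precisely that straightforward consequence. Your careful justification of the $v$--cancellation law is more detail than the paper supplies, but it is correct and the approach is the same.
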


From Corollary \ref{v-inv} and from the ``integrally closed domain part'' of Lemma \ref{gauss}, we have the following result (see \cite[Mott-Nashier-Zafrullah (1990), Theorem 2.4 and Section 3]{MNZ}):

\begin{corollary}
In the situation of Lemma \ref{gauss}, set
${\tV}_{D} :=\{g\in D[X] \mid  \co_D(g) \mbox{ is $v$--invertible}\}\mbox{ and }{\tT}_{D} :=\{g\in D[X] \mid  \co_D(g) \mbox{ is $t$--invertible}\}.$
Then, $ \tT_D$ and ${V}_{D}$  are  multiplicative sets of $D[X]$ with $ \tT_D \subseteq {\tV}_{D}$. Furthermore,   ${\tV}_{D}$ (or, equivalently, $\tT_D$) is saturated if and only if $D$ is integrally closed.

\end{corollary}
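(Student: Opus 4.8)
The plan is to verify three things in turn: that $\tT_D$ and $\tV_D$ are multiplicative sets, that $\tT_D \subseteq \tV_D$, and then the saturation criterion. The first two are immediate: a product of two $v$--invertible (respectively $t$--invertible) ideals is $v$--invertible (respectively $t$--invertible) since $v$ and $t$ are star operations, and $\co_D(fg)^v = (\co_D(f)\co_D(g))^v$ whenever $\co_D(f)$ (or $\co_D(g)$) is $v$--invertible by Corollary~\ref{v-inv}; hence $\co_D(fg)$ is $v$--invertible when both $\co_D(f)$ and $\co_D(g)$ are, and similarly for $t$. The inclusion $\tT_D \subseteq \tV_D$ follows because $t$--invertibility implies $v$--invertibility (any $\ast_{\!_f}$--invertible ideal is $v$--invertible, since $t \le v$).

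For the saturation statement, I would argue both directions. Suppose $D$ is integrally closed. To show $\tV_D$ is saturated, take $f, g \in D[X]$ nonzero with $fg \in \tV_D$, i.e. $\co_D(fg)$ is $v$--invertible; I must show $\co_D(f)$ (and by symmetry $\co_D(g)$) is $v$--invertible. Here I would invoke the ``integrally closed domain part'' of Lemma~\ref{gauss}: for $D$ integrally closed, $\co_D(fg)^v = (\co_D(f)\co_D(g))^v$. Combined with $v$--invertibility of $\co_D(fg)$, this gives $(\co_D(f)\co_D(g))^v$ is $v$--invertible, hence $\co_D(f)\co_D(g)$ is $v$--invertible; and a factor of a $v$--invertible ideal is $v$--invertible (if $(AB A^{-1}B^{-1})^v = D$ then $(A \cdot (BA^{-1}B^{-1}))^v = D$, so $A$ has a $v$--inverse). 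Therefore $f \in \tV_D$, proving saturation. The same computation with $w$ in place of $v$ handles $\tT_D$ (recall $t$--invertibility is the same as $w$--invertibility, and use the P$v$MD/$w$ part of Lemma~\ref{gauss})---though one must be slightly careful, since the $w$--Gauss identity in Lemma~\ref{gauss}(ii) is stated for P$v$MD's rather than for arbitrary integrally closed $D$; I would instead run the $\tT_D$ case directly through Corollary~\ref{v-inv}, which only needs $\co_D(fg)$ itself to be $t$--invertible to conclude $\co_D(fg)^t = (\co_D(f)\co_D(g))^t$, and then factor as before.

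For the converse, suppose $\tV_D$ is saturated; I must show $D$ is integrally closed. The natural strategy is contrapositive: if $D$ is not integrally closed, exhibit a failure of saturation. Take $u \in K$ integral over $D$ but not in $D$, so $u$ satisfies a monic relation, equivalently there is a nonzero $f \in D[X]$ with $f(u) = 0$; then $f(X) = (X - u) h(X)$ for some $h \in K[X]$, and after clearing a common denominator $d \in D$ one gets $d f(X) = (dX - du) \cdot (\text{something})$ exhibiting $df$, which lies in $D[X]$ and has invertible (indeed principal, up to the relevant identification) content, as a product of two polynomials in $D[X]$ at least one of whose contents is \emph{not} $v$--invertible---the factor carrying the non-integral element $u$. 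Making this precise is the main obstacle: one needs the classical content computation showing that, for $D$ not integrally closed, there exist $p, q \in D[X]$ with $\co_D(pq)$ $v$--invertible (e.g.\ principal) but $\co_D(p)$ not $v$--invertible; this is exactly the content of \cite[Mott-Nashier-Zafrullah (1990), Theorem 2.4 and Section 3]{MNZ}, and I would cite it or reproduce the short argument there. Once that polynomial pair is in hand, $pq \in \tV_D$ while $p \notin \tV_D$, contradicting saturation, so $D$ must be integrally closed. The $\tT_D$ version of the converse is identical, using that a principal content is $t$--invertible and that the non-integral factor's content fails even $v$--invertibility, hence a fortiori $t$--invertibility.

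The hard part is thus entirely the ``$\tV_D$ saturated $\Rightarrow$ $D$ integrally closed'' direction, and within it the explicit construction of the polynomial factorization witnessing non-saturation when $D$ is not integrally closed; everything else is a formal consequence of the star-operation machinery in Section~\ref{prel} together with Lemma~\ref{gauss} and Corollary~\ref{v-inv}.
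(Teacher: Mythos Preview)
Your argument is essentially correct and matches the route the paper indicates (it does not give a self-contained proof but points to Corollary~\ref{v-inv}, the integrally-closed case of Lemma~\ref{gauss}, and \cite{MNZ}). One small slip: Corollary~\ref{v-inv} requires the content of a \emph{factor} to be $v$--invertible, not $\co_D(fg)$; but you do not actually need that reading, since for integrally closed $D$ Lemma~\ref{gauss} already gives $\co_D(fg)^{v}=(\co_D(f)\co_D(g))^{v}$, and for finitely generated ideals $(-)^{t}=(-)^{v}$, so the $\tT_D$ saturation follows by the same factor-splitting you used for $\tV_D$.
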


It can be useful   to observe  that,  from Remark \ref{intersection-princ} (a.1), we have     
$$
\emph{\tT}_{D} = \{ g \in \emph{\tV}_D \mid  \co_D(g)^{-1} \mbox{ is $t$--finite}\}. 
$$
We are now   in a position    to give a characterization of $v$--domains (and P$v$MD's) in terms of rational functions    (see \cite[Mott-Nashier-Zafrullah (1990), Theorem 2.5 and Theorem 3.1]{MNZ}).   

\begin{theorem}\label{v-rational}
\label{Theorem MNZ} Suppose that $D$ is an
integrally closed domain, then the following are equivalent:
\begin{itemize}
\item[\rm (i)] \hskip 10pt $D$ is a $v$-domain (respectively, a P$v$MD).

\item[\rm (ii)] \hskip 10pt
$\tV_{D}=D[X]\backslash \{0\}$ (respectively,  $\tT_{D}=D[X]\backslash \{0\}$).

\item[\rm (iii)] \hskip 10pt$D[X]_{{\footnotesize{\tV}}_{D}}$
 (respectively, $D[X]_{{\footnotesize{\tT}}_{D}}$)
   is a field (or, equivalently, $D[X]_{{\footnotesize{\tV}}_{D}}= K(X)$
    (respectively, $D[X]_{{\footnotesize{\tT}}_{D}}= K(X)$)).

\item[\rm (iv)] \hskip 10ptEach
nonzero element $z \in K$ satisfies a polynomial $f\in D[X]$ such that $
\co_D(f)$ is $v$--invertible  (respectively, $t$--invertible).
\end{itemize}
\end{theorem}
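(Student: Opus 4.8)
The plan is to run the equivalences in the order (i)$\Leftrightarrow$(ii), (ii)$\Leftrightarrow$(iii), (ii)$\Leftrightarrow$(iv), handling the $v$--domain case and the parenthetical P$v$MD case in parallel: the P$v$MD statements are obtained from the $v$--domain ones by systematically replacing ``$v$--invertible'' by ``$t$--invertible'' (equivalently ``$w$--invertible'', since these coincide), $v$ by $w$, and $\tV_D$ by $\tT_D$. The inputs are the content formula of Lemma~\ref{gauss} for an integrally closed $D$, namely $\co_D(fg)^v=(\co_D(f)\co_D(g))^v$ (and its $w$--analogue); the fact---established in the corollary above defining $\tV_D$ and $\tT_D$, via Lemma~\ref{gauss} and Dedekind--Mertens---that $\tV_D$ (resp. $\tT_D$) is saturated because $D$ is integrally closed; and the reduction of the $v$--domain (resp. P$v$MD) property to two--generated ideals, from Remark~\ref{rk:5}(b) (resp. Remark~\ref{intersection-princ}(b)). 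I will use twice two elementary observations: for $I,J\in\F(D)$ with $I^v=J^v$, $I$ is $v$--invertible iff $J$ is (since $I^{-1}=(I^v)^{-1}=(J^v)^{-1}=J^{-1}$ and $(II^{-1})^v=(I^vI^{-1})^v=(J^vJ^{-1})^v=(JJ^{-1})^v$); and a factor of a $v$--invertible ideal is $v$--invertible (if $AB$ is $v$--invertible, put $M:=B(AB)^{-1}$; then $AM=(AB)(AB)^{-1}\subseteq D$, so $M\subseteq A^{-1}$ and $D=(AM)^v\subseteq(AA^{-1})^v$). Both statements hold verbatim with $w$ in place of $v$.

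For (i)$\Leftrightarrow$(ii): if $D$ is a $v$--domain, then every nonzero $g\in D[X]$ has $\co_D(g)\in\f(D)$, hence $\co_D(g)$ is $v$--invertible, so $g\in\tV_D$ and $\tV_D=D[X]\setminus\{0\}$; conversely, given $F\in\f(D)$, after multiplying by a suitable nonzero $d\in D$ (which preserves $v$--invertibility, as $(xA)^v=xA^v$ and $(xA)^{-1}=x^{-1}A^{-1}$) we may assume $F=(a_1,\dots,a_n)\subseteq D$, and then $g:=a_1+a_2X+\cdots+a_nX^{n-1}$ is a nonzero element of $D[X]=\tV_D$ with $\co_D(g)=F$, so $F$ is $v$--invertible and $D$ is a $v$--domain. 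For (ii)$\Rightarrow$(iii): if $\tV_D=D[X]\setminus\{0\}$, then $D[X]_{\tV_D}$ is the localization of $D[X]$ at all its nonzero elements, i.e. its quotient field $K(X)$, so both forms of (iii) hold. For (iii)$\Rightarrow$(ii): assuming only that $D[X]_{\tV_D}$ is a field, a nonzero $g\in D[X]$ is a unit there, so from $g\cdot(p/s)=1$ with $p\in D[X]$, $s\in\tV_D$ we obtain $gp=s\in\tV_D$ (as $D[X]$ is a domain); since $\tV_D$ is saturated, $g\in\tV_D$, whence $\tV_D=D[X]\setminus\{0\}$.

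For (ii)$\Rightarrow$(iv): given $0\ne z=a/b\in K$ with $a,b\in D$, the polynomial $f:=bX-a\in D[X]$ vanishes at $z$ and, being nonzero, lies in $\tV_D=D[X]\setminus\{0\}$, so $\co_D(f)$ is $v$--invertible. For (iv)$\Rightarrow$(i), by Remark~\ref{rk:5}(b) it suffices to show that $(a,b)$ is $v$--invertible for all nonzero $a,b\in D$; put $z:=a/b\ne0$ and choose, by (iv), $f\in D[X]$ with $f(z)=0$ and $\co_D(f)$ $v$--invertible. Then $f$ is necessarily nonconstant (otherwise $f(z)\ne0$), so $X-z$ divides $f$ in $K[X]$, say $f=(X-z)q$ with $q\in K[X]$, and since $D$ is integrally closed, Lemma~\ref{gauss} gives
$$\co_D(f)^v=\bigl(\co_D(X-z)\,\co_D(q)\bigr)^v=\bigl((1,z)\,\co_D(q)\bigr)^v,\qquad (1,z):=D+zD.$$
Hence $(1,z)\,\co_D(q)$ is $v$--invertible, so its factor $(1,z)$ is $v$--invertible, and since $(1,z)=\tfrac1b(b,a)$ and multiplication by $\tfrac1b$ preserves $v$--invertibility, $(a,b)$ is $v$--invertible. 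The P$v$MD case is entirely analogous, using the $w$--form of Lemma~\ref{gauss}, the saturation of $\tT_D$, the coincidence of $t$--invertibility and $w$--invertibility, and the two--generated characterization of P$v$MD's in Remark~\ref{intersection-princ}(b).

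I expect the only genuinely non--formal step to be (iv)$\Rightarrow$(i): the key device is to factor the vanishing polynomial as $f=(X-z)q$ over $K$ and feed $f$, $X-z$, $q$ into the integrally closed case of Gauss's Lemma to get the content identity $\co_D(f)^v=((1,z)\co_D(q))^v$, then peel off the linear factor and translate $(1,z)$ back to $(a,b)$; the reduction to two generators (Remark~\ref{rk:5}(b)) is what makes the linear polynomial $f$ enough to conclude. Everything else is bookkeeping with the $v$-- and $w$--operations and with localization.
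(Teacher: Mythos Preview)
The paper does not supply its own proof of this theorem; it is stated with a reference to \cite[Mott--Nashier--Zafrullah (1990), Theorems 2.5 and 3.1]{MNZ}. Your argument is precisely the one suggested by the tools assembled just before the statement (Lemma~\ref{gauss}, the saturation corollary for $\tV_D$ and $\tT_D$, and the two-generator reductions of Remarks~\ref{intersection-princ}(b) and \ref{rk:5}(b)), and for the $v$--domain half it is correct as written.

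There is one slip in the P$v$MD half. In your last sentence you propose to run (iv)$\Rightarrow$(i) ``using the $w$--form of Lemma~\ref{gauss}'', but that form is \emph{equivalent} to $D$ being a P$v$MD, so invoking it here is circular. The fix is painless: since $D$ is integrally closed, the $v$--form of Lemma~\ref{gauss} already gives $\co_D(f)^v=\bigl((1,z)\,\co_D(q)\bigr)^v$; as both ideals are finitely generated this equals $\co_D(f)^t=\bigl((1,z)\,\co_D(q)\bigr)^t$. Now your two elementary observations transfer verbatim to $t$ via the identity $(AB)^t=(A^tB)^t$: from $\co_D(f)$ $t$--invertible one gets $(1,z)\,\co_D(q)$ $t$--invertible, hence its factor $(1,z)=\tfrac1b(a,b)$ is $t$--invertible. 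With this correction the P$v$MD case goes through exactly as you outlined.
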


\begin{remark} \rm Note that \emph{quasi Pr\"ufer domains} (i.e., integral domains having integral closure Pr\"ufer   \cite[Ayache-Cahen-Echi (1996)]{ACE})  can also be characterized by using properties of the field of rational functions. In the situation of Lemma \ref{gauss}, set $\emph{\tS}_D :=\{g\in D[X] \mid  \co_D(g) \mbox{ is invertible}\}$. Then, by Lemma \ref{v-inv}, the multiplicative set $\emph{\tS}_D$ of $D[X]$    is saturated    if and only if $D$ is integrally closed. Moreover, $D$ is quasi Pr\"ufer if and only if $D[X]_{{\footnotesize\emph{{\tS}}}_{D}}$ is a field (or, equivalently, $D[X]_{{\footnotesize\emph{{\tS}}}_{D}}= K(X)$) if and only if  each
nonzero element $z \in K$ satisfies a polynomial $f\in D[X]$ such that $
\co_D(f)$ is  invertible \cite[Mott-Nashier-Zafrullah (1990), Theorem 1.7]{MNZ}.
\end{remark}

Looking more carefully at the content of polynomials, it  is obvious that the set
$$\emph{\tN}_{D} :=\{g\in D[X] \mid  \co_D(g)^v = D \} $$
is a subset of $\emph{\tT}_D$ and it is well known that $\emph{\tN}_{D}$ is a saturated multiplicative set of $D[X]$ \cite[Kang (1989), Proposition 2.1]{Kang}.  We call the \emph{ Nagata ring of $D$ with respect to the $v$--operation} the ring:
$$ \Na(D, v) := D[X]_{\footnotesize\emph{{\tN}}_{D}}\,.$$
~We can  also consider
$$
\Kr(D, v) := \{ f/g \mid f, g \in D[X], \  g\neq 0, \ \co_D(f)^v \subseteq \co_D(g)^v \}\,.  $$
When $v$ is an e.a.b. operation on $D$ (i.e., when $D$ is a $v$--domain) $
\Kr(D, v)$ is a ring called  the \emph{Kronecker function ring of $D$ with respect to the $v$--operation} \cite[Gilmer (1972), Theorem 32.7]{Gi}.
Clearly, in general,  $ \Na(D, v)  \subseteq \Kr(D, v)$. It is proven in \cite[Fontana-Jara-Santos (2003), Theorem 3.1 and Remark 3.1]{FJS} that $ \Na(D, v)$ $  = \Kr(D, v)$ if and only if $D$ is a P$v$MD.

\begin{remark} (a) Concerning Nagata and Kronecker function rings, note that  a unified general treatment and semistar analogs of several results were obtained in the recent years, see for instance  \cite[Fontana-Loper (2001)]{FLo-01}, \cite[Fontana-Loper (2006)]{FLo-06} and   \cite[Fontana-Loper (2007)]{FLo-07}.

(b) A general version of Lemma \ref{gauss}, in case of semistar operations, was recently proved in \cite[D.F. Anderson-Fontana-Zafrullah (2008), Corollary 1.2]{AFZ}.
\end{remark}

\bigskip


 \noindent \bf 4.c $v$--domains and uppers to zero. \rm Recall that if $X$
is an indeterminate over an integral domain $D$ and if $Q$ is a  nonzero  prime ideal
of $D[X]$ such that $Q\cap D= (0)$ then $Q$ is called \emph{an upper to zero}. 
The   ``upper'' terminology    in polynomial rings is due to  S. McAdam   and was introduced in the early  1970's.  
In a recent paper, Houston and Zafrullah introduce the
\emph{UM$v$--domains} as the integral domains such that the uppers to zero are maximal $v$--ideals and they prove  the
following result      \cite[Houston-Zafrullah (2005), Theorem 3.3]{HZ1}.

\begin{theorem}
\label{umv} Let $D$ be an integral domain with quotient field $K$ and let $X$ be an indeterminate over $K$.
The following are equivalent.
\begin{itemize}
\item[\rm (i)] \hskip 10pt $D$ is a $v$-domain.
\item[\rm (ii)] \hskip 10pt $D$ is an integrally closed UM$v$--domain.
\item[\rm (iii)]  \hskip 10pt $D $ is integrally closed and every upper to zero in $D[X]$ is $v$--invertible.
\item[\rm (iii$_\ell$)] \hskip 10pt  $D$ is integrally closed and every upper to zero of the type $Q_{\ell}:=\ell K[X]\cap D[X]$ with $\ell \in D[X]$ a linear polynomial is $v$--invertible.
\end{itemize}
\end{theorem}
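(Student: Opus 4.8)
The plan is to prove the cycle of implications (i) $\Rightarrow$ (iii) $\Rightarrow$ (iii$_\ell$) $\Rightarrow$ (i), together with the equivalence (ii) $\Leftrightarrow$ (iii), which is essentially a matter of unwinding the definition of UM$v$--domain. First I would observe that (iii) $\Rightarrow$ (iii$_\ell$) is trivial, since $Q_\ell = \ell K[X] \cap D[X]$ is an upper to zero whenever $\ell$ is a nonzero nonconstant polynomial. The equivalence (ii) $\Leftrightarrow$ (iii) should follow once we note that a $v$--invertible $v$--ideal which is prime is automatically a maximal $v$--ideal when it has height one, and uppers to zero in $D[X]$ do have height one; conversely, if every upper to zero is a maximal $v$--ideal then, being $t$--finite (uppers to zero are always $t$--ideals generated up to $v$ by their content times a polynomial, hence $v$--finite), such an ideal is $v$--invertible by Remark \ref{intersection-princ} (a.1) applied in $D[X]$. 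So the heart of the matter is (i) $\Rightarrow$ (iii) and, most importantly, (iii$_\ell$) $\Rightarrow$ (i).

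For (i) $\Rightarrow$ (iii): assume $D$ is a $v$--domain, so $D$ is integrally closed by $\Rightarrow_{11}$. Let $Q$ be an upper to zero in $D[X]$; choose $f \in Q$ of minimal degree, so that $Q = fK[X] \cap D[X]$. The key classical fact (going back to the content-ideal machinery, e.g.\ the integrally closed characterization of Querré recalled in Section 4.a, or \cite[Houston-Zafrullah (2005)]{HZ1}) is that $Q^{v} = f\,\co_D(f)^{-1}[X] \cap D[X]$, or more usably that $Q$ is $v$--invertible in $D[X]$ if and only if $\co_D(f)$ is $v$--invertible in $D$. Since $D$ is a $v$--domain, $\co_D(f) \in \f(D)$ is $v$--invertible in $D$, and then a content computation — using the Dedekind--Mertens lemma (Lemma, Section 4.b) to control $\co_D(fg)^v$ and Corollary \ref{v-inv} — yields $(Q\,(D[X]:Q))^v = D[X]$. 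I would carry out this reduction carefully: the point is to express $Q^{-1}$ in $D[X]$ in terms of $\co_D(f)^{-1}$ and $f$, and then $v$--invertibility of $\co_D(f)$ transfers directly.

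The main obstacle is (iii$_\ell$) $\Rightarrow$ (i), i.e.\ upgrading $v$--invertibility of uppers to zero generated by \emph{linear} polynomials to the full $v$--domain property. The strategy is to use the two-generator reduction: by Remark \ref{rk:5} (b) (or \cite[Mott-Nashier-Zafrullah (1990), Lemma 2.6]{MNZ}), $D$ is a $v$--domain if and only if every nonzero two-generated ideal $(a,b)$ of $D$ is $v$--invertible. Given nonzero $a, b \in D$, I would form the linear polynomial $\ell := a - bX \in D[X]$ (or $aX - b$, whichever is convenient), whose content is exactly $\co_D(\ell) = (a,b)$, and consider the upper to zero $Q_\ell = \ell K[X] \cap D[X]$. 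By hypothesis (iii$_\ell$), $Q_\ell$ is $v$--invertible in $D[X]$; since $D$ is integrally closed we again have the content-to-ideal dictionary ($Q_\ell$ $v$--invertible in $D[X]$ forces $\co_D(\ell) = (a,b)$ $v$--invertible in $D$). The delicate part here is verifying that the implication ``$Q_\ell$ $v$--invertible in $D[X]$ $\Rightarrow$ $\co_D(\ell)$ $v$--invertible in $D$'' genuinely holds for a linear $\ell$ over an integrally closed domain, without circularity; this is exactly where the Gauss-type lemma for integrally closed domains (Lemma, part (i)$\Leftrightarrow$(ii) with $v$) and the Dedekind--Mertens estimate do the work — contracting the identity $(Q_\ell\,(D[X]:Q_\ell))^v = D[X]$ down to $D$ via contents. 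Once $(a,b)^v$ is $v$--invertible for all $a,b$, the two-generator criterion closes the loop and $D$ is a $v$--domain.
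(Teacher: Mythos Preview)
Your overall strategy matches the paper's. The paper does not give a self-contained proof of Theorem~\ref{umv} (it cites \cite[Houston--Zafrullah (2005), Theorem 3.3]{HZ1}), but it explicitly presents Proposition~\ref{Theorem 10} as the template, and your cycle (i) $\Rightarrow$ (iii) $\Rightarrow$ (iii$_\ell$) $\Rightarrow$ (i) together with the two-generator reduction for (iii$_\ell$) $\Rightarrow$ (i) is exactly the argument sketched there. One sharpening: for the passage between $Q_\ell$ and $\co_D(\ell)$ the paper invokes Querr\'e's formula $Q_\ell = \ell\,\co_D(\ell)^{-1}D[X]$ (valid over an integrally closed $D$), which makes the transfer of $v$--invertibility between $Q_\ell$ and $(a,b)=\co_D(\ell)$ immediate; you should state and use this formula explicitly rather than appeal vaguely to a ``content-to-ideal dictionary'' via Dedekind--Mertens.

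Your treatment of (ii) $\Leftrightarrow$ (iii), however, has real gaps. For (iii) $\Rightarrow$ (ii) you assert that a height-one prime $v$--invertible $v$--ideal is automatically a maximal $v$--ideal; this is not obvious (height one says nothing about maximality among $v$--ideals), and in any case you have not checked that an upper to zero is itself a $v$--ideal. For (ii) $\Rightarrow$ (iii) you invoke Remark~\ref{intersection-princ}~(a.1), but that statement concerns $t$--invertibility of a \emph{finitely generated} ideal and does not yield $v$--invertibility from ``$Q$ is a maximal $v$--ideal'' as you claim. The paper handles the $t$--analog of this equivalence by quoting Proposition~\ref{Theorem 9} (from \cite{HZ2}); the $v$--version requires the corresponding argument from \cite{HZ1}, namely that for an upper to zero $Q$ one has $Q^{-1}\supsetneq D[X]$ (via Querr\'e's formula again), whence $Q$ maximal $v$--ideal forces $(QQ^{-1})^v=D[X]$, and conversely $v$--invertibility of $Q$ gives $\co_D(Q)^v=D$, from which maximality follows. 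You should replace your sketch of (ii) $\Leftrightarrow$ (iii) with this line of reasoning.
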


It would be unfair to end the section with this characterization of $v$-domains
without giving a hint about where the idea came from.

Gilmer  and  Hoffmann  in  1975  gave a characterization of Pr\"ufer domains using uppers to zero. This result is based on the following characterization of essential valuation overrings of    an integrally closed domain $D$: let  $P$ be a prime ideal of $D$, then $D_P$ is a valuation domain if and only if, for each upper to zero $Q$ of $D[X]$, $Q \not\subseteq P[X]$,  \cite[Gilmer (1972), Theorem 19.15]{Gi}.

%

A globalization of the previous statement leads to the following result that can be easily deduced from  \cite[Gilmer-Hoffmann (1975), Theorem 2]{Gi-Ho}.

\begin{proposition} \label{pr:26} In the situation of Theorem  \ref{umv}, the following are equivalent:
\begin{itemize}
\item[\rm (i)]  \hskip 10pt $D$ is a Pr\"ufer domain.
\item[\rm (ii)]  \hskip 10pt $D$ is integrally closed and if $Q$  is an upper to zero    of $D[X]$, then $Q \not\subseteq M[X]$, for all $M \in \Max(D)$ (i.e., $\co_D(Q) =D$).
\end{itemize}
\end{proposition}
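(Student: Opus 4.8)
The plan is to prove the equivalence (i)$\Leftrightarrow$(ii) of Proposition \ref{pr:26} by combining the local valuation criterion stated just above (namely, for an integrally closed $D$ and a prime $P$, $D_P$ is a valuation domain if and only if no upper to zero of $D[X]$ is contained in $P[X]$, \cite[Gilmer (1972), Theorem 19.15]{Gi}) with the characterization of Pr\"ufer domains as those integral domains $D$ with $D_M$ a valuation domain for every $M \in \Max(D)$. Recall that a Pr\"ufer domain is integrally closed (each localization at a maximal ideal is a valuation domain, hence integrally closed, and ``integrally closed'' is a local property), so in both directions we may assume $D$ is integrally closed and only need to track the condition on uppers to zero.

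First I would establish (i)$\Rightarrow$(ii). Assume $D$ is Pr\"ufer. Then $D$ is integrally closed, and for every $M \in \Max(D)$ the localization $D_M$ is a valuation domain. By the cited Theorem 19.15, applied with $P = M$, this is equivalent to saying that no upper to zero $Q$ of $D[X]$ satisfies $Q \subseteq M[X]$. Since this holds for all maximal ideals $M$, we conclude that $\co_D(Q)$ is not contained in any maximal ideal of $D$, i.e. $\co_D(Q) = D$; here one uses the standard fact that $Q \subseteq M[X]$ is equivalent to $\co_D(Q) \subseteq M$ (an upper to zero contains a polynomial $f$ with $\co_D(f)$ generating an ideal whose radical-type behavior controls containment in $M[X]$; more directly, $Q \subseteq M[X]$ iff every coefficient of every element of $Q$ lies in $M$ iff $\co_D(Q) \subseteq M$). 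This gives (ii).

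For (ii)$\Rightarrow$(i), assume $D$ is integrally closed and every upper to zero $Q$ of $D[X]$ has $\co_D(Q) = D$, equivalently $Q \not\subseteq M[X]$ for all $M \in \Max(D)$. Fix a maximal ideal $M$; since no upper to zero lies in $M[X]$, Theorem 19.15 (with $P = M$) gives that $D_M$ is a valuation domain. As this holds for every $M \in \Max(D)$, $D$ is a Pr\"ufer domain. This closes the loop. The main obstacle --- really the only nontrivial input --- is the local criterion \cite[Gilmer (1972), Theorem 19.15]{Gi} (and, behind it, the globalization in \cite[Gilmer-Hoffmann (1975), Theorem 2]{Gi-Ho}); granting that, the argument is a purely formal passage between the local statement ``$D_M$ valuation for all $M$'' and the global statement ``no upper to zero sits in any $M[X]$'', together with the routine identification of $Q \subseteq M[X]$ with $\co_D(Q) \subseteq M$. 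One should also double-check the edge case where $D$ is a field (so $\Max(D) = \{(0)\}$ and $D[X] = K[X]$ is a PID whose uppers to zero are the maximal ideals $\ell K[X]$, all with content $D$): both conditions hold vacuously/trivially, consistently with a field being Pr\"ufer.
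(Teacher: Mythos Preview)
Your proof is correct and follows precisely the approach the paper indicates: it invokes the local criterion \cite[Gilmer (1972), Theorem 19.15]{Gi} (for integrally closed $D$, $D_P$ is a valuation domain if and only if no upper to zero is contained in $P[X]$) and globalizes it over all maximal ideals, exactly as the paper says the result ``can be easily deduced from \cite[Gilmer-Hoffmann (1975), Theorem 2]{Gi-Ho}.'' The paper does not spell out the details, but your argument is the intended one.
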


In \cite[Zafrullah (1984), Proposition 4]{Z1} the author proves  a ``$t$--version'' of the previous result.

\begin{proposition}
\label{Theorem 8}
In the situation of Theorem  \ref{umv}, the following are equivalent:
\begin{itemize}
\item[\rm (i)]  \hskip 10pt $D$ is a P$v$MD.

\item[\rm (ii)] \hskip 10pt $D$ is integrally closed and  if $Q$ upper to zero  of $D[X]$, then $Q \not\subseteq M[X]$, for all maximal $t$--ideal $M$ of $D$ (i.e.,  $\co_D(Q)^t =D$).
\end{itemize}
\end{proposition}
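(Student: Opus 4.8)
The plan is to prove Proposition \ref{Theorem 8} by reducing the P$v$MD property to a local-at-$t$-maximal-ideals statement and then translating the valuation condition on $D_M$ into the upper-to-zero condition via Gilmer's criterion (\cite[Gilmer (1972), Theorem 19.15]{Gi}), exactly parallel to how Proposition \ref{pr:26} is deduced from the Pr\"ufer case. Recall from Griffin's characterization \cite[Griffin (1967), Theorem 5]{Gr} that $D$ is a P$v$MD if and only if $D_M$ is a valuation domain for every maximal $t$--ideal $M$ of $D$, and recall from the introductory discussion that since $D$ is integrally closed by hypothesis in (ii), the $b$--operation and the behaviour of valuation overrings are available.

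First I would prove (i)$\Rightarrow$(ii). Assuming $D$ is a P$v$MD, it is in particular integrally closed (a P$v$MD is a $v$--domain, hence integrally closed by $\Rightarrow_{11}$). Let $M$ be a maximal $t$--ideal of $D$; by Griffin's theorem $D_M$ is a valuation domain. Now let $Q$ be an upper to zero of $D[X]$; I must show $Q \not\subseteq M[X]$, equivalently $\co_D(Q)^t = D$, equivalently $\co_D(Q) \not\subseteq M$ (since $M$ is a $t$--ideal and hence $\co_D(Q)^t \subseteq M$ would follow from $\co_D(Q) \subseteq M$, while conversely $\co_D(Q)^t = D$ forces $\co_D(Q) \not\subseteq M$). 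Apply the Gilmer criterion \cite[Gilmer (1972), Theorem 19.15]{Gi} with $P = M$: since $D_M$ is a valuation domain, for each upper to zero $Q$ we get $Q \not\subseteq M[X]$. Doing this for every maximal $t$--ideal $M$ yields $\co_D(Q) \not\subseteq M$ for all such $M$, i.e. $\co_D(Q)$ is contained in no maximal $t$--ideal, which (since every proper $t$--ideal lies in a maximal $t$--ideal, by the Jaffard–Malik result quoted in the preliminaries) forces $\co_D(Q)^t = D$.

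Next I would prove (ii)$\Rightarrow$(i). Assume $D$ is integrally closed and every upper to zero $Q$ of $D[X]$ satisfies $\co_D(Q)^t = D$ (equivalently $Q \not\subseteq M[X]$ for every maximal $t$--ideal $M$). Fix a maximal $t$--ideal $M$; I want $D_M$ to be a valuation domain, and then Griffin's theorem finishes the proof. By the Gilmer criterion \cite[Gilmer (1972), Theorem 19.15]{Gi}, $D_M$ is a valuation domain as soon as $Q \not\subseteq M[X]$ for every upper to zero $Q$ of $D[X]$ — but that is precisely the hypothesis. One subtlety to check carefully: the Gilmer criterion is stated for uppers to zero of $D[X]$ relative to a prime $P$ of $D$, and one needs that an upper to zero of $D_M[X]$ contracts to an upper to zero of $D[X]$ not contained in $M[X]$, which is standard since $D[X] \hookrightarrow D_M[X]$ and the content behaves well under localization ($\co_{D_M}(Q D_M[X]) = \co_D(Q) D_M$). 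I would record this compatibility explicitly. Finally, the equivalence of the two formulations of (ii) — "$Q \not\subseteq M[X]$ for all maximal $t$--ideals $M$" versus "$\co_D(Q)^t = D$" — follows from the elementary fact that $Q \subseteq M[X]$ iff $\co_D(Q) \subseteq M$ (one direction is clear; the other because $Q$ is generated over $K[X]$ by polynomials whose coefficients lie in $\co_D(Q)$), together with the localization of $t$--maximal ideals.

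The main obstacle I expect is the careful handling of the content-ideal bookkeeping in the descent step, specifically verifying that the condition on uppers to zero of $D[X]$ transfers correctly to $D_M[X]$ so that the Gilmer valuation criterion applies at each maximal $t$--ideal; the rest is a routine assembly of Griffin's theorem, the Gilmer criterion, and the standard fact that proper $t$--ideals are contained in maximal $t$--ideals. A secondary point worth double-checking is that $\co_D(Q)^t = D$ is genuinely equivalent to "$\co_D(Q)$ is in no maximal $t$--ideal" rather than merely implied by it — this uses that $t = v_{_{\!f}}$ is of finite type, so a $t$--ideal that is proper is contained in a maximal $t$--ideal, hence $\co_D(Q)^t \neq D$ would put $\co_D(Q)$ inside some maximal $t$--ideal, contradiction.
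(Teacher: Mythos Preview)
Your argument is correct and is precisely the natural parallel to Proposition~\ref{pr:26} that the paper intends: Griffin's characterization reduces the P$v$MD property to ``$D_M$ is a valuation domain for every maximal $t$--ideal $M$'', and Gilmer's criterion \cite[Gilmer (1972), Theorem 19.15]{Gi} translates this into the upper-to-zero condition. The paper itself does not spell out a proof here; it simply cites \cite[Zafrullah (1984), Proposition 4]{Z1} and remarks that the argument ``relies on very basic properties of polynomial rings'', so your write-up is in fact more detailed than what the survey provides, and it matches the intended line of reasoning.

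One small simplification: your worry about passing between uppers to zero of $D[X]$ and of $D_M[X]$ is unnecessary. Gilmer's criterion, as quoted in the paper, is already formulated in terms of uppers to zero of $D[X]$ and a prime $P$ of $D$ (with $D$ integrally closed): $D_P$ is a valuation domain if and only if no upper to zero of $D[X]$ is contained in $P[X]$. So in the (ii)$\Rightarrow$(i) direction you can apply the criterion directly with $P=M$, without any descent bookkeeping. The rest of your argument --- the equivalence $Q\subseteq M[X]\Leftrightarrow \co_D(Q)\subseteq M$, and the fact that $\co_D(Q)^t=D$ iff $\co_D(Q)$ lies in no maximal $t$--ideal (using that $t$ is of finite type) --- is exactly right.
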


The proof of the previous proposition relies on very basic properties of polynomial rings. 

Note that in \cite[Zafrullah (1984), Lemma 7]{Z1} it
is also shown that, if $D$ is a P$v$MD, then every upper to zero
in $D[X]$ is a maximal $t$--ideal. 
 As we observed in Section \ref{prel}, unlike maximal $v$--ideals, the maximal $t$--ideals are  ubiquitous.

Around the same time, in \cite
[Houston-Malik-Mott (1984), Proposition 2.6]{HMM}, the authors came up with a much better result,  using
the $\ast $-operations much more efficiently. Briefly, this result  said that the converse holds, i.e., \it 
$D$ is a P$v$MD if and only if  $D$ is  an integrally closed integral domain and every
upper to zero in $D[X]$ is a maximal $t$--ideal. \rm

It turns  out that integral
domains $D$ such that their 
uppers to zero in $D[X]$ are maximal $t$--ideals  (called  \emph{UM$t$-domains}  in   \cite[Houston-Zafrullah (1989), Section 3]{HZ2};  see also \cite[Fontana-Gabelli-Houston (1998)]{FGH} and, for a survey on the subject, \cite[Houston (2006)]{Houston})  and domains such that, for each upper to zero $Q$ of $D[X]$, $\co_D(Q)^t = D$
  had   an independent life.  In  \cite[Houston-Zafrullah (1989), Theorem 1.4]{HZ2},   studying $t$--invertibility, the authors  prove the following result.

\begin{proposition}
\label{Theorem 9}  In the situation of Theorem \ref{umv}, let $Q$ be an upper to zero in $D[X]$. The following statements are equivalent.
\begin{itemize}
\item[\rm (i)]  \hskip 10pt $Q$ is a maximal $t$--ideal of $D[X]$.
\item[\rm (ii)] \hskip 10pt $Q$ is a $t$--invertible ideal of $D[X]$.
\item[\rm (iii)] \hskip 10pt $\co_D(Q)^{t}=D$.
\end{itemize}
 \end{proposition}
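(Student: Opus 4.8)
The plan is to prove the cycle of implications $(i) \Rightarrow (iii) \Rightarrow (ii) \Rightarrow (i)$, where $Q$ is an upper to zero in $D[X]$; the equivalence $(i) \Leftrightarrow (ii)$ is just the general fact (recalled earlier in the excerpt, following Krull, in the paragraph on $\ast_f$-invertibility versus $\ast_f$-finiteness) that for a star operation of finite type, invertibility implies finiteness, plus the observation that an upper to zero $Q$ is a $t$-ideal of $D[X]$ precisely when it is $t$-maximal, since $Q$ has height one in $D[X]$ and a $t$-invertible prime is necessarily $t$-maximal. So the real content is the equivalence of $(iii)$ with the other two, i.e. relating the $t$-invertibility of $Q$ in $D[X]$ to the $t$-invertibility of its content ideal $\co_D(Q)$ in $D$.

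First I would recall the standard descriptions of $Q$: if $Q$ is an upper to zero, then $QK[X] = hK[X]$ for some $h \in K[X]$, and $Q = hK[X] \cap D[X]$. One may normalize $h$ so that $\co_D(h) \subseteq D$; more usefully, after clearing denominators one can choose a polynomial $g \in D[X]$ with $Q = gK[X] \cap D[X]$ and understand $Q$ in terms of $g$ and $\co_D(g)$. The key computational tool is the Dedekind--Mertens lemma (Lemma stated in the excerpt, $\co_D(f)^m \co_D(fg) = \co_D(f)^{m+1}\co_D(g)$) and its Corollary: once $\co_D(f)$ is $v$-invertible, the content behaves multiplicatively up to $v$-closure, $\co_D(fg)^v = (\co_D(f)\co_D(g))^v$. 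This is what lets one pass between content ideals in $D$ and the ideal structure in $D[X]$.

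The heart of the argument is to show: $Q^{-1} = \co_D(Q)^{-1}[X] \cdot h^{-1}$ (suitably interpreted), so that $(QQ^{-1})^t = D[X]$ in $D[X]$ if and only if $(\co_D(Q)\co_D(Q)^{-1})^t = D$ in $D$. Concretely, for $(iii) \Rightarrow (ii)$: assuming $\co_D(Q)^t = D$, I would write $\co_D(Q) = \co_D(g)$ for an appropriate generator, use that $\co_D(g)$ is $t$-invertible (indeed $v$-invertible with $\co_D(g)^{-1}$ being $v$-finite trivially since $\co_D(g)^v = D$ forces $\co_D(g)^{-1} = D$... no: $\co_D(Q)^t = D$ means $\co_D(Q)^{-1} = D$), hence by Corollary \ref{v-inv} the content is multiplicative, and then check directly that $g D[X]$ and $Q$ have the same $t$-closure or that $Q$ is $t$-invertible with inverse $(1/g)D[X]$. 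For $(ii) \Rightarrow (iii)$, given a $t$-inverse of $Q$, apply $\co_D$ to the relation $QQ^{-1} \subseteq D[X]$ and saturate, using Dedekind--Mertens to bound $\co_D(QQ^{-1})$ from above by $(\co_D(Q)\co_D(Q^{-1}))^v$ and from the $t$-invertibility of $Q$ that $\co_D(QQ^{-1})^v = D$ (since $(QQ^{-1})^t = D[X]$ forces its content, after $v$, to be $D$), which yields $(\co_D(Q)\co_D(Q^{-1}))^t = D$, i.e. $\co_D(Q)$ is $t$-invertible, and since an upper to zero's content, when $t$-invertible, can be shown to be $t$-closed to $D$ — using that $Q \cap D = (0)$ and a degree/leading-coefficient argument — we get $\co_D(Q)^t = D$.

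The main obstacle I anticipate is the bookkeeping around generators of $Q$ and the precise identity $Q = \co_D(Q)[X] \cap gK[X]$ type statements: one must be careful that $\co_D(Q)$ (the ideal generated by contents of \emph{all} polynomials in $Q$, not the content of a single generator) is what enters, and reconcile this with a convenient single polynomial $g$ with $Q = gK[X] \cap D[X]$. The cleanest route is probably to fix $g \in Q$ of minimal degree with, say, leading coefficient adjustments, show $Q = gK[X] \cap D[X]$, and then prove $\co_D(Q)^v$ equals $\co_D(g)^v$ times a correction that disappears under the $v$-invertibility hypothesis; alternatively one can bypass this by working entirely with $\co_D(Q)$ and invoking Dedekind--Mertens termwise. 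I would also need the small lemma that a $t$-invertible upper to zero is automatically $t$-maximal (height considerations in $D[X]$, which is what makes $(ii) \Leftrightarrow (i)$ immediate), and the observation that $\co_D$ of a $t$-invertible upper to zero must be $(0)$-disjoint-forced to have trivial $t$-closure — this last point is where the hypothesis $Q \cap D = (0)$ is essential and is the one genuinely delicate step.
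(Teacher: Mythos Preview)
The paper does not give a proof of this proposition; it simply cites \cite[Houston--Zafrullah (1989), Theorem 1.4]{HZ2}. So there is nothing to compare your argument against in this survey. That said, your sketch has real gaps that would prevent it from working as written.

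Your claimed shortcut for $(ii)\Leftrightarrow(i)$ rests on two assertions that are both false or unjustified. First, ``an upper to zero $Q$ is a $t$--ideal of $D[X]$ precisely when it is $t$--maximal'' is wrong: \emph{every} upper to zero is a $t$--ideal of $D[X]$ (if $F\subseteq Q=hK[X]\cap D[X]$ is finitely generated, pick $0\neq d\in D$ with $(d/h)F\subseteq D[X]$; then $d/h\in F^{-1}$, so $F^{v}\subseteq (h/d)D[X]\cap D[X]\subseteq hK[X]\cap D[X]=Q$), yet not every upper to zero is $t$--maximal---that is exactly what distinguishes UM$t$--domains. Second, ``a $t$--invertible prime is necessarily $t$--maximal'' fails in general: $(X,Y)\subseteq k[X,Y]$ has $(X,Y)^{-1}=k[X,Y]$, hence $((X,Y)(X,Y)^{-1})^{t}=(X,Y)^{t}=k[X,Y]$, so $(X,Y)$ is $t$--invertible but is not even a $t$--ideal. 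So $(ii)\Rightarrow(i)$ is not a triviality; it needs the specific structure of uppers to zero, and the clean route is through $(iii)$.

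What is missing from your sketch is the one lemma that actually does the work: \emph{if $g\in D[X]$ with $\co_D(g)^{v}=D$ and $0\neq a\in D$, then $(g,a)^{v_{D[X]}}=D[X]$}. This follows from Dedekind--Mertens exactly as you suspect (for $r\in(g,a)^{-1}$ one gets $r\in K[X]$ from $ar\in D[X]$, then $\co_D(g)^{m+1}\co_D(r)=\co_D(g)^{m}\co_D(rg)\subseteq \co_D(g)^{m}$ forces $\co_D(r)^{v}\subseteq D$, i.e.\ $r\in D[X]$). With this lemma, $(iii)\Rightarrow(i)$ is immediate: if $Q$ were contained in a strictly larger prime $t$--ideal $N$, then $N\cap D\neq(0)$; picking $g\in Q$ with $\co_D(g)^{v}=D$ and $0\neq a\in N\cap D$ gives $(g,a)^{v}=D[X]\subseteq N$, a contradiction. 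The implication $(i)\Rightarrow(iii)$ is the easy one you gesture at (via $Q\subseteq M[X]$ for a maximal $t$--ideal $M$ of $D$), and $(iii)\Rightarrow(ii)$ also comes from the same lemma. Your attempted $(ii)\Rightarrow(iii)$ via ``$\co_D(Q)$ $t$--invertible forces $\co_D(Q)^{t}=D$'' is a step in the wrong direction---you never establish that $\co_D(Q)$ is $t$--invertible, and even if you did, that alone does not give $\co_D(Q)^{t}=D$.
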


Based on this result, one can see that the following statement  is  a  precursor to Theorem \ref{umv}.

\begin{proposition}
\label{Theorem 10}   Let $D$ be an integral domain with quotient field $K$ and let $X$ be an indeterminate over $K$. The following are equivalent.
\begin{itemize}
\item[\rm (i)] \hskip 10pt $D$ is a P$v$MD.
\item[\rm (ii)] \hskip 10pt $D$ is an integrally closed UM$t$--domain.
\item[\rm (iii)] \hskip 10pt $D$ is
integrally closed and every upper to zero in $D[X]$ is $t$--invertible.
 \item[\rm (iii$_\ell$)]  \hskip 10pt  $D $ is integrally closed and every upper to zero of the type  $Q_{\ell}:=\ell K[X]\cap D[X]$, with $\ell \in D[X]$ a linear polynomial, is $t$--invertible.
 \end{itemize}
\end{proposition}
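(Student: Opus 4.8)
The plan is to establish the chain of equivalences
$(i)\Leftrightarrow(ii)\Leftrightarrow(iii)\Leftrightarrow(iii_\ell)$ by leaning heavily on two results already available in the excerpt: the characterization of P$v$MD's as integrally closed UM$t$--domains (the Houston--Malik--Mott result quoted just before this proposition, or equivalently Theorem \ref{umv} with ``$t$'' in place of ``$v$''), and Proposition \ref{Theorem 9}, which for a \emph{single} upper to zero $Q$ identifies the three conditions ``$Q$ is a maximal $t$--ideal of $D[X]$'', ``$Q$ is $t$--invertible in $D[X]$'', and ``$\co_D(Q)^t = D$''. The overall strategy is that Proposition \ref{Theorem 9} lets us freely interchange the three formulations of the key hypothesis on uppers to zero, so the real content is just matching the definitions.

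First I would prove $(i)\Leftrightarrow(ii)$. This is essentially the statement attributed above to \cite[Houston-Malik-Mott (1984), Proposition 2.6]{HMM}: $D$ is a P$v$MD if and only if $D$ is integrally closed and every upper to zero in $D[X]$ is a maximal $t$--ideal, which is precisely the definition of an integrally closed UM$t$--domain (UM$t$--domains being defined in \cite[Houston-Zafrullah (1989)]{HZ2} as those $D$ whose uppers to zero in $D[X]$ are maximal $t$--ideals). So $(i)\Leftrightarrow(ii)$ can simply be cited. Next, $(ii)\Leftrightarrow(iii)$ follows from Proposition \ref{Theorem 9}: for a fixed integrally closed $D$, saying every upper to zero $Q$ of $D[X]$ is a maximal $t$--ideal is, by the equivalence $(i)\Leftrightarrow(ii)$ of Proposition \ref{Theorem 9}, the same as saying every upper to zero is $t$--invertible. (One should note that Proposition \ref{Theorem 9} is stated for a single $Q$ with $D$ integrally closed held fixed, so applying it uniformly over all uppers to zero is legitimate.)

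The only implication requiring a small argument is $(iii_\ell)\Rightarrow(iii)$, since $(iii)\Rightarrow(iii_\ell)$ is trivial (linear polynomials are a special case). Here the plan is: given $(iii_\ell)$, $D$ is integrally closed, and we must upgrade ``every \emph{linear} upper to zero is $t$--invertible'' to ``every upper to zero is $t$--invertible''. By Proposition \ref{Theorem 9} this is equivalent to showing $\co_D(Q)^t = D$ for every upper to zero $Q$, knowing it for the linear ones $Q_\ell = \ell K[X]\cap D[X]$. The mechanism is that an upper to zero of $D[X]$ is generated (in $K[X]$) by an irreducible polynomial $f$, and $\co_D(Q) = \co_D(f)$ up to the standard normalization; by Gauss-type manipulations (Dedekind--Mertens, Lemma following Lemma \ref{gauss}) the $t$--class of $\co_D(f)$ for a degree-$n$ polynomial can be controlled by $t$--classes of contents of linear factors of $f$ over an algebraic closure, together with the hypothesis $(iii_\ell)$ applied to those linear uppers to zero. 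This is the same reduction used in the proof of Theorem \ref{umv}, and I expect this degree-reduction step — passing from a linear hypothesis to arbitrary degree via content formulas over an extension — to be the main obstacle and the place where the argument has to be written carefully rather than merely cited; in the write-up I would either invoke the corresponding step in \cite[Houston-Zafrullah (2005)]{HZ1} directly or reproduce the content computation.

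A clean alternative that avoids even that step is to observe that $(iii_\ell)$ for $D$ integrally closed already forces $D$ to be a P$v$MD by comparison with Theorem \ref{umv}: indeed Theorem \ref{umv} shows $(iii_\ell)$ with ``$v$--invertible'' characterizes $v$--domains, and Proposition \ref{Theorem 9} converts ``$t$--invertible upper to zero'' into the content condition $\co_D(Q)^t=D$, which is stronger than $v$--invertibility of $Q$; running the argument with the P$v$MD characterization $(i)\Leftrightarrow(ii)$ in hand then closes the loop $(iii_\ell)\Rightarrow(i)$ directly. Either way, once $(i)\Leftrightarrow(ii)\Leftrightarrow(iii)$ is in place, appending $(iii_\ell)$ to the chain costs only the trivial implication plus this one reduction, so I would present the proof as: cite $(i)\Leftrightarrow(ii)$, derive $(ii)\Leftrightarrow(iii)$ from Proposition \ref{Theorem 9}, and handle $(iii)\Leftrightarrow(iii_\ell)$ by the content-of-polynomials reduction, flagging it as the one genuinely computational point.
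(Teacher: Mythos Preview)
Your treatment of $(i)\Leftrightarrow(ii)\Leftrightarrow(iii)$ and $(iii)\Rightarrow(iii_\ell)$ matches the paper's: cite Houston--Zafrullah for $(i)\Leftrightarrow(ii)$, invoke Proposition~\ref{Theorem 9} for $(ii)\Leftrightarrow(iii)$, and note the trivial specialization for $(iii)\Rightarrow(iii_\ell)$.

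The divergence is in closing the loop from $(iii_\ell)$. The paper does \emph{not} attempt $(iii_\ell)\Rightarrow(iii)$; it proves $(iii_\ell)\Rightarrow(i)$ directly, and by a much simpler mechanism than either of your proposals. The key input you are missing is the two-generator characterization of P$v$MD's (\cite[Malik-Mott-Zafrullah (1988), Lemma 1.7]{MMZ}, recalled in Remark~\ref{rk:5}(c)): $D$ is a P$v$MD iff every nonzero two-generated ideal $(a,b)$ is $t$--invertible. Given such $(a,b)$, set $\ell:=a+bX$; since $D$ is integrally closed, Querr\'e's formula gives $Q_\ell=\ell\,\co_D(\ell)^{-1}D[X]$. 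Now $t$--invertibility of $Q_\ell$ in $D[X]$ forces $\co_D(\ell)^{-1}D[X]$, and hence $\co_D(\ell)=(a,b)$, to be $t$--invertible in $D$. That is the whole argument.

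Your primary route --- reducing an arbitrary upper to zero to linear ones by factoring $f$ over an algebraic closure and applying Dedekind--Mertens --- has a genuine gap: the linear factors of $f$ live over an extension field, not in $D[X]$, so the hypothesis $(iii_\ell)$ does not apply to them, and there is no evident descent that recovers $\co_D(Q)^t=D$ from information about those foreign linear factors. Your ``clean alternative'' is also not a proof as stated: Proposition~\ref{Theorem 9} gives $\co_D(Q_\ell)^t=D$ (content of the \emph{ideal} $Q_\ell$), not $t$--invertibility of $\co_D(\ell)=(a,b)$; bridging these two is precisely the Querr\'e step above, which you do not invoke.
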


Note that  the equivalence (i)$\Leftrightarrow$(ii) is  in  \cite[Houston-Zafrullah (1989), Proposition 3.2]{HZ2}.   (ii)$\Leftrightarrow$(iii) is a consequence of previous Proposition \ref{Theorem 9}. Obviously, (iii)$\Rightarrow$(iii$_\ell$). (iii$_\ell$)$\Rightarrow$(i) is a consequence of the characterization already cited that an integral domain  $D$ is a P$v$MD if and only if each nonzero two generated ideal is $t$--invertible \cite[Malik-Mott-Zafrullah (1988), Lemma 1.7]{MMZ}.   As a matter of fact,  consider a nonzero  two generated ideal $I:= (a, b)$ in $D$, set $\ell:= a + bX$ and  $Q_{\ell}:=\ell K[X]\cap D[X]$. Since $D$ is integrally closed, then $Q_{\ell} = \ell\co_D(\ell)^{-1}D[X]$   by \cite[Querr\'e (1980), Lemme 1, page 282]{Querre}. If $Q_{\ell}$ is $t$--invertible (in ($D[X]$), then it is easy to conclude that   $\co_D(\ell)= I$ is $t$--invertible (in $D$).

\medskip

 \begin{remark} \rm  Note that Pr\"ufer domains may not be characterized by straight modi\-fications of conditions (ii) and (iii) of Proposition \ref{Theorem 10}.
 As a matter of fact, if there exists in $D[X]$ an upper to zero which is also a maximal ideal, then the domain $D$ is a G(oldman)-domain (i.e., its quotient field is finitely generated over $D$), and conversely   \cite[Kaplansky (1970), Theorems 18 and 24]{Kap}.  Moreover, every upper to zero in $D[X]$ is invertible  if and only if $D$ is a GGCD domain  \cite[D.D. Anderson-Dumitrescu-Zafrullah (2007), Theorem 15]{ADZ}. 
 
 On the other  hand,   a variation of condition (iii$_\ell$) of Proposition \ref{Theorem 10} does characterize Pr\"ufer domains:  \it $D$ is a Pr\"ufer domain if and only if $D$ is integrally closed and  every upper to zero of the type $Q_{\ell}:=\ell K[X]\cap D[X]$ with $\ell \in D[X]$ a linear polynomial is such that $\co_D(Q_{\ell}) =D$ \rm \cite[Houston-Malik-Mott (1984), Theorem 1.1]{HMM}. 
 
 \end{remark}

\end{section}

\begin{section}{$v$--domains and GCD--theories}

In \cite[Borevich-Shafarevich (1966), page 170]{BS},  a \emph{factorial monoid} $\boldsymbol{\mathcal{D}}$   is a commutative semigroup with a unit element   $\mathfrak{1}$  (and without zero element) such that every element $\mathfrak{a} \in \boldsymbol{\mathcal{D}}$ can be uniquely represented as a finite product of atomic (= irreducible) elements $\mathfrak{q}_i$ of $\boldsymbol{\mathcal{D}}$, i.e., $ \mathfrak{a} = \mathfrak{q}_1\mathfrak{q}_2...\mathfrak{q}_r$, with $r \geq 0$ and this factorization is unique up to the order of factors; for $r=0$ this product is set equal to  $\mathfrak{1}$.    As a consequence, it is easy to see that this kind of uniqueness of factorization implies that  $\mathfrak{1}$  is the only invertible element in $\boldsymbol{\mathcal{D}}$, i.e., $\U(\boldsymbol{\mathcal{D}}) =   \{\mathfrak{1}\}$. Moreover, it is not hard to see that, in a factorial monoid, any two elements have GCD and every atom is a prime element \cite[Halter-Koch (1998), Theorem 10.7]{H-K}.

Let $D$ be an integral domain and set $D^\bullet := D \setminus\{0\}$. 
In \cite[Borevich-Shafarevich (1966), page 171]{BS} an integral domain $D$ is said to have a \emph{divisor theory}
if there is a factorial monoid $\boldsymbol{\mathcal{D}}$ and a semigroup homomorphism, denoted by 
(--)$: D^\bullet \rightarrow \boldsymbol{\mathcal{D}}$, given by $a\mapsto (a)$,  such
that:
\begin{itemize}

\item[\bf(D1)] \hskip 14pt $(a) \mid(b)$ in $\D$ if and only if $a \mid
b $ in $D$ for $a, b \in D^\bullet$.

\item[\bf(D2)]  \hskip 14pt   If $\mathfrak{g}\mid (a)$ and $\mathfrak{g}\mid (b)$ then $\mathfrak{g}\mid(a \pm b )$ for $a, b
\in D^\bullet$ with $a \pm b \neq 0$ and $\mathfrak{g}\in 
\D$.

\item[\bf(D3)] \hskip 14pt  Let $\mathfrak{g} \in \D$ and set
$$\overline{\mathfrak{g}}:=\{ x \in D^\bullet \mbox {\; such that \;}  \mathfrak{g}\mid (x)\}\cup \{0\}.
$$
Then $\overline{\mathfrak{a}}=\overline{\mathfrak{b}}$ if and only if $
\mathfrak{a=b}$ for all $\mathfrak{a,b } \in \D$.
\end{itemize}

 Given a divisor theory, the elements of the factorial monoid $\D$ are called \emph{divisors of the integral domain $D$} and the divisors of the type $(a)$, for $a \in D$ are called \emph{principal divisors of $D$}.

Note that, in \cite[Skula (1970), page 119]{Skula},  the author shows that the axiom {\bf{(D2)}}, which  guarantees that $\overline{\mathfrak{g}}$ is an ideal of $D$,  for each divisor $\mathfrak{g} \in \D$,  is unnecessary.   Furthermore, note that divisor theories were also considered in \cite[Mo\v{c}ko\v{r} (1993), Chapter 10]{Mck},  written in the spirit of Jaffard's volume \cite{J:1960}.

Borevich and Shafarevich   introduced domains with a divisor theory  in order to generalize  Dedekind domains and unique factorization domains, along the lines of Kronecker's  classical theory of ``algebraic divisors'' (cf.   \cite[Kronecker (1882)]{Kronecker} and also  \cite[Weyl (1940)]{Weyl} and \cite[Edwards [1990)]{Edwards}). As a matter of fact, they proved that
\begin{enumerate}
\item[(a)]  if an integral domain $D$ has a divisor theory  (--)$: D^\bullet \rightarrow \boldsymbol{\mathcal{D}}$  then it has only one (i.e., if (\!(--)\!)$: D^\bullet \rightarrow \boldsymbol{\mathcal{D'}}$ is another divisor theory then there is an isomorphism $\boldsymbol{\mathcal{D}} \cong   \boldsymbol{\mathcal{D'}}$  under which the principal divisors in $ \boldsymbol{\mathcal{D}}$  and $\boldsymbol{\mathcal{D'}}$, which correspond to a given nonzero element $a \in D$,  are identified) \cite[Borevich-Shafarevich (1966), Theorem 1, page 172]{BS};

\item[(b)]  an integral domain $D$ is a unique factorization domain if and only if  $D$  has a divisor theory  (--)$: D^\bullet \rightarrow \boldsymbol{\mathcal{D}}$ in which every divisor of $\D$ is principal \cite[Borevich-Shafarevich (1966), Theorem 2, page 174]{BS};

\item[(c)]  an integral domain $D$ is a Dedekind domain  if and only if  $D$  has a divisor theory  (--)$: D^\bullet \rightarrow \boldsymbol{\mathcal{D}}$ such that,  for every prime element $ {\mathfrak{p}}$ of $\D$, $D/\overline{\mathfrak{p}}$ is a field \cite[Borevich-Shafarevich (1966), Chapter 3, Section 6.2]{BS}.
\end{enumerate}

Note that Borevich and Shafarevich do not enter into the details of the determination of those integral domains for which a theory of divisors can be constructed \cite[Borevich-Shafarevich (1966), page 178]{BS}, but it is known that they coincide with the Krull domains (see \cite[van der Waerden (1931), \S 105]{vdW0}, \cite[Aubert (1983), Theorem 5]{Aubert},  \cite[Lucius (1998), \S 5]{Lu},  and \cite[Krause (1989)]{Krause} for the monoid case).    In particular, note that, for a Krull domain, the
group of non-zero fractional divisorial ideals provides a divisor theory.

\medskip

Taking the above definition as a starting point and recalling that \bf (D2) \rm  is
unnecessary, in  \cite[Lucius (1998)]{Lu}, the author  introduces a more general class of domains, called the domain with a GCD--theory.

 An integral domain $D$ is said to have a \emph{GCD--theory}
if there is a GCD--monoid $\G$ and a semigroup homomorphism, denoted by 
(--)$: D^\bullet \rightarrow \G$, given by $a\mapsto (a)$,  such
that:
\begin{itemize}

\item[\bf(G1)] \hskip 15pt $(a) \mid(b)$ in $\G$ if and only if $a \mid
b $ in $D$ for $a, b \in D^\bullet$.

\item[\bf(G2)] \hskip 15pt Let $\mathfrak{g} \in \G$ and set
$\overline{\mathfrak{g}}:=\{ x \in D^\bullet  \mbox{\, such that \,}  \mathfrak{g}\mid (x)\}\cup \{0\}.
$
Then $\overline{\mathfrak{a}}=\overline{\mathfrak{b}}$ if and only if $
\mathfrak{a=b}$ for all $\mathfrak{a,b } \in \G$.
\end{itemize}

Let $\Q := \boldsymbol{q}(\G)$ be the  group of quotients of the GCD--monoid $\G$. 
It is not hard to prove that the natural
extension a GCD--theory (--)$: D^\bullet \rightarrow \G$ to a
group homomorphism (--)$^{\prime }:  K^\bullet \rightarrow \Q$
 has the following properties:
 \begin{itemize}

\item[\bf(qG1)] \hskip 20pt $(\alpha)^{\prime }\mid (\beta)^{\prime }$ with respect to $\G
$ if and only if $\alpha \mid \beta $ with respect to $D$ for $\alpha, \beta $ $
\in K^\bullet.$

\item[\bf(qG2)] \hskip 20 pt Let $\mathfrak{h} \in \Q$ and set
$\overline{\mathfrak{h}}:=\{ \gamma \in K^\bullet \mbox{\, such that \,}  \mathfrak{h}\mid (\gamma)^\prime\}\cup \{0\}$ (the division in $\Q$ is with respect to $\G$).
Then $\overline{\mathfrak{a}}=\overline{\mathfrak{b}}$ if and only if $
\mathfrak{a=b}$ for  all $\mathfrak{a,b\in } \Q$.
\end{itemize}

In \cite[Lucius (1998), Theorem 2.5]{Lu},  the author proves the following key result, that clarifies the role
of the ideal $\overline{\mathfrak{a}}$.   (Call, as before, \emph{divisors of  $D$} the elements of the GCD--monoid $\G$   and  \emph{principal divisors of $D$} the divisors of the type $(a)$, for $a \in D^\bullet$.)

\begin{proposition}
\label{Theorem L1}  Let $D$ be an integral domain with
GCD--theory \ \rm (--)$: D^\bullet \rightarrow \G$,  \it let  $\mathfrak{a}$ be
any divisor of $\G$ and $\{(a _{i}) \}_{i\in I}$ a family of principal divisors
with $\mathfrak{a=}$ GCD$(\{(a_{i})\}_{i\in I})$. Then $\overline{\mathfrak{a}}=(\{a _{i}\}_{i\in I})^{v}=\overline{\mathfrak{a}}^v$.
\end{proposition}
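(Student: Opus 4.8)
The plan is to establish the chain $\overline{\mathfrak a} = (\{a_i\}_{i\in I})^v = \overline{\mathfrak a}^v$ by first identifying $\overline{\mathfrak a}$ with the intersection of the principal fractional ideals $a_i D$, and then recognizing that intersection as a $v$-ideal. First I would unwind the definition: by \textbf{(G2)}, $\overline{\mathfrak a} = \{x \in D^\bullet \mid \mathfrak a \mid (x)\} \cup \{0\}$. Using \textbf{(G1)} together with the fact that $\mathfrak a = \mathrm{GCD}(\{(a_i)\}_{i\in I})$ in $\G$, the condition $\mathfrak a \mid (x)$ is equivalent to: for every common lower bound issue — more precisely, $(x)$ is a common multiple of... — no: $\mathfrak a \mid (x)$ means $(x)$ lies above $\mathfrak a$ in the divisibility order. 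Since $\mathfrak a$ is the GCD of the $(a_i)$, one has $\mathfrak a \mid (x)$ iff $(x)$ divides... I need to be careful with the direction. The correct reading is: $\mathfrak a = \mathrm{GCD}(\{(a_i)\})$ means $\mathfrak a \mid (a_i)$ for all $i$, and $\mathfrak a$ is the largest such divisor; equivalently, for the \emph{extended} group $\Q = \boldsymbol q(\G)$, a divisor $\mathfrak a$ divides $(x)$ iff $(x)/\mathfrak a \in \G$, i.e. iff $(x/a_i)$-type quotients behave well. The cleanest route: show $x \in \overline{\mathfrak a}$ (with $x \in D^\bullet$) iff $a_i \mid x$... no, that would give $\bigcap$ of the wrong objects. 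Let me instead use that $\mathfrak a$ divides $(x)$ iff the principal divisor $(x)$ is a multiple of $\mathfrak a$, and since $\mathfrak a = \mathrm{GCD}((a_i))$, $(x)$ is a multiple of $\mathfrak a$ iff $(x)$ is a multiple of $\mathrm{GCD}((a_i))$; for GCD-monoids this is NOT simply ``multiple of each $(a_i)$''. The right statement: in a GCD-monoid, $\mathfrak g \mid \mathrm{GCD}(S)$ iff $\mathfrak g \mid s$ for all $s \in S$; dually $\mathrm{GCD}(S) \mid t$ does not have such a clean form. So I should work in $\Q$: $x \in \overline{\mathfrak a}$ iff $(x)\mathfrak a^{-1} \in \G$, and since $\mathfrak a^{-1} = \mathrm{LCM}(\{(a_i)^{-1}\})$ up to the GCD/LCM duality in the lattice-ordered group $\Q$, this becomes: $x/1 \in \bigcap_i a_i D$ in an appropriate sense. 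The key lemma I would isolate and prove is: \emph{for $x \in K^\bullet$, $x \in \overline{\mathfrak a}$ (extended via \textbf{(qG2)}) iff $x \in a_i\cdot(\text{something})$...}; concretely, I claim $\overline{\mathfrak a} = \bigcap_{i} a_i^{-1}\cdots$.

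Let me restate the plan more robustly. The heart is: \textbf{Step 1.} Prove $\overline{\mathfrak a} = \bigcap_{i \in I}\, \overline{(a_i)}$ is \emph{false} in general — rather, prove the correct identity $\overline{\mathfrak a} = \bigl(\{a_i\}_{i\in I}\bigr)$ is also false; the true statement needs the $v$-operation, which is exactly what the proposition asserts. So \textbf{Step 1} is: show $\bigl(\{a_i\}_{i\in I}\bigr) \subseteq \overline{\mathfrak a}$, by checking each $a_i \in \overline{\mathfrak a}$ (immediate from $\mathfrak a \mid (a_i)$ and \textbf{(G1)}) and that $\overline{\mathfrak a}$ is a $D$-submodule of $K$ (closure under $D$-multiplication follows from $\mathfrak a \mid (x) \Rightarrow \mathfrak a \mid (dx)$; closure under addition is the content of old axiom \textbf{(D2)}, which by Skula's observation cited in the excerpt is automatic — here I would either invoke that or re-prove it directly from the GCD-monoid structure). \textbf{Step 2:} $\overline{\mathfrak a}$ is divisorial, i.e. $\overline{\mathfrak a} = \overline{\mathfrak a}^v$. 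For this I would show $\overline{\mathfrak a} = \bigcap\{x^{-1}D \mid x \in K^\bullet,\ \overline{\mathfrak a} \subseteq x^{-1}D\}$, equivalently that $\overline{\mathfrak a}$ is an intersection of principal fractional ideals. This is where the extension \textbf{(qG1)}/\textbf{(qG2)} to $\Q = \boldsymbol q(\G)$ does the work: using \textbf{(qG2)}, $\overline{\mathfrak a} = \{\gamma \in K^\bullet : \mathfrak a \mid (\gamma)'\} \cup\{0\}$, and $\mathfrak a \mid (\gamma)'$ in the lattice-ordered group $\Q$ can be tested against the principal divisors lying above $\mathfrak a$; concretely $\mathfrak a = \bigwedge_i (a_i)'$ forces, for any $\beta \in K^\bullet$ with $\mathfrak a \mid (\beta)'$... hmm. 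The cleaner assertion: $\overline{\mathfrak a} = \bigcap\{\,\beta D : \beta \in K^\bullet,\ (\beta)' \mid \mathfrak a\,\}$, because $x \in D^\bullet$ satisfies $\mathfrak a \mid (x)$ iff for every $\beta$ with $(\beta)' \mid \mathfrak a$ we have $(\beta)' \mid (x)$, iff $\beta \mid x$ in $D$ (by \textbf{(qG1)}), iff $x \in \beta D$. The nontrivial direction uses that $\mathfrak a$ is the infimum of such $\beta$'s — indeed $\mathfrak a$ is an $\inf$ of principal divisors since it is the GCD of the $(a_i)$ — plus the interpolation/lattice property of $\Q$. An intersection of principal fractional ideals is a $v$-ideal (standard), giving $\overline{\mathfrak a}^v = \overline{\mathfrak a}$.

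\textbf{Step 3:} Combine. From Step 1, $\bigl(\{a_i\}\bigr) \subseteq \overline{\mathfrak a}$, and since $\overline{\mathfrak a}$ is divisorial (Step 2), applying $v$ gives $\bigl(\{a_i\}\bigr)^v \subseteq \overline{\mathfrak a}^v = \overline{\mathfrak a}$. For the reverse inclusion $\overline{\mathfrak a} \subseteq \bigl(\{a_i\}\bigr)^v$: take $x \in \overline{\mathfrak a}$; I must show $x \in \bigl(\{a_i\}\bigr)^v = \bigcap\{\beta D : \beta \in K^\bullet,\ \{a_i\} \subseteq \beta D\}$. But $\{a_i\} \subseteq \beta D$ means $\beta \mid a_i$ for all $i$, i.e. $(\beta)' \mid (a_i)'$ for all $i$ (by \textbf{(qG1)}), i.e. $(\beta)' \mid \bigwedge_i (a_i)' = \mathfrak a$; and then $\mathfrak a \mid (x)$ gives $(\beta)' \mid (x)$, so $\beta \mid x$, so $x \in \beta D$. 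Hence $x \in \bigl(\{a_i\}\bigr)^v$, completing $\overline{\mathfrak a} = \bigl(\{a_i\}\bigr)^v = \overline{\mathfrak a}^v$.

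\textbf{Main obstacle.} The delicate point is Step 2 / the reverse inclusion in Step 3: translating ``$\mathfrak a$ is the GCD of the $(a_i)$ in $\G$'' into the statement ``$\mathfrak a = \bigwedge_i (a_i)'$ in the group $\Q$, and this infimum is realized as an infimum over \emph{all} principal divisors dividing it.'' This requires knowing that the GCD in $\G$ extends compatibly to the meet in the lattice-ordered group $\boldsymbol q(\G)$ and that divisibility in $\Q$ is detected by principal divisors — essentially the properties \textbf{(qG1)}, \textbf{(qG2)} that the excerpt says are ``not hard to prove,'' plus the lattice structure of $\Q$. I would prove the needed compatibility lemma (GCD in $\G$ $=$ meet in $\Q$ when both exist, and a divisor is determined by the principal divisors above/below it) as a preliminary, citing \cite[Halter-Koch (1998)]{H-K} for the monoid-theoretic facts about GCD-monoids and their groups of quotients, and then the three steps above go through mechanically.
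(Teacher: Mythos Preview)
The paper does not prove this proposition; it merely cites \cite[Lucius (1998), Theorem 2.5]{Lu}. So there is no in-paper argument to compare against, and I evaluate your proposal on its own merits.

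After the false starts, your Steps 1 and 3 are correct and already yield the chain $(\{a_i\})_{i\in I} \subseteq \overline{\mathfrak a} \subseteq (\{a_i\})^v$, whence $\overline{\mathfrak a}^{\,v} = (\{a_i\})^v$. The remaining equality $\overline{\mathfrak a} = \overline{\mathfrak a}^{\,v}$ (your Step 2) is, as you say, the crux --- but your sketch does not close it. The biconditional you write, ``$\mathfrak a \mid (x)$ iff for every $\beta$ with $(\beta)' \mid \mathfrak a$ we have $(\beta)' \mid (x)$,'' tacitly assumes $\mathfrak a = \sup\{(\beta)' : \beta \in K^\bullet,\ (\beta)' \le \mathfrak a\}$, which is not automatic from \textbf{(G1)}--\textbf{(G2)} and is exactly what needs proof. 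Waving at Halter-Koch for ``the lattice structure of $\boldsymbol q(\G)$'' does not by itself supply this.

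The clean resolution (essentially Lucius's) is to extract more from \textbf{(qG2)} than mere injectivity. First, since $\overline{\mathrm{LCM}(\mathfrak p,\mathfrak q)} = \overline{\mathfrak p}\cap\overline{\mathfrak q}$, an inclusion $\overline{\mathfrak p}\subseteq\overline{\mathfrak q}$ gives $\overline{\mathrm{LCM}(\mathfrak p,\mathfrak q)}=\overline{\mathfrak p}$, hence $\mathrm{LCM}(\mathfrak p,\mathfrak q)=\mathfrak p$ by \textbf{(qG2)}, i.e.\ $\mathfrak q\mid\mathfrak p$. So $\mathfrak a\mapsto\overline{\mathfrak a}$ is an order anti-embedding, not just injective. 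Second, a direct check gives $z\,\overline{\mathfrak a}=\overline{(z)'\mathfrak a}$ for $z\in K^\bullet$. Combining, $z\in(\overline{\mathfrak a})^{-1}$ iff $\overline{(z)'\mathfrak a}\subseteq D=\overline{\mathfrak 1}$ iff $\mathfrak 1\mid (z)'\mathfrak a$ iff $z\in\overline{\mathfrak a^{-1}}$; thus $(\overline{\mathfrak a})^{-1}=\overline{\mathfrak a^{-1}}$ and so $\overline{\mathfrak a}^{\,v}=\overline{\mathfrak a}$. This is the ``compatibility lemma'' you allude to; stated this way it requires no density-of-principals hypothesis, only \textbf{(qG2)} and the existence of finite LCMs in $\boldsymbol q(\G)$. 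With this lemma in hand, your Steps 1--3 finish the proof.
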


Partly as a consequence of Proposition \ref{Theorem L1}, we have a
characterization of a $v$--domain as a domain with GCD-theory \cite[Lucius (1998), Theorem and Definition 2.9]{Lu}.

\begin{theorem}
\label{Theorem L2} Given an integral domain $D$, $D$ is a ring with GCD--theory if and only if $D$ is a $v$--domain.
\end{theorem}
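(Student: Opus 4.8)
\textbf{Proof proposal for Theorem \ref{Theorem L2}.}

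The plan is to prove the two implications separately, using Proposition \ref{Theorem L1} as the main bridge. First I would handle the direction ``$D$ has a GCD--theory $\Rightarrow D$ is a $v$--domain''. Let $(\text{--}): D^\bullet \to \G$ be a GCD--theory and let $F = (a_1,\dots,a_n) \in \f(D)$ be a nonzero finitely generated ideal. Since $\G$ is a GCD--monoid, $\mathfrak{a} := \mathrm{GCD}((a_1),\dots,(a_n))$ exists in $\G$, and by Proposition \ref{Theorem L1} we have $\overline{\mathfrak{a}} = F^v = \overline{\mathfrak{a}}^v$. The key point is then that divisibility of divisors is reflected faithfully by divisibility in $D$ (axiom \textbf{(G1)}) and that GCDs in $\G$ behave multiplicatively: passing to the group of quotients $\Q = \boldsymbol q(\G)$ and using property \textbf{(qG1)}, one shows that for the divisor $\mathfrak{a}$ the ``inverse'' divisor (i.e., $\mathrm{GCD}$ of the generators of $F^{-1}$, which is $\mathfrak{a}^{-1}$ in $\Q$) satisfies $\overline{\mathfrak{a}} \cdot \overline{\mathfrak{a}^{-1}}$ has associated divisor equal to the unit $\mathfrak{1}$, hence $\overline{\mathfrak{1}} = D$. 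Translating back via Proposition \ref{Theorem L1} applied to products, $(FF^{-1})^v = \overline{\mathfrak{a}\cdot\mathfrak{a}^{-1}} = \overline{\mathfrak{1}} = D$, so $F$ is $v$--invertible and $D$ is a $v$--domain. I expect the delicate part here is justifying that the divisorial product $(FF^{-1})^v$ corresponds under the GCD--theory to the product of divisors $\mathfrak a \cdot \mathfrak a^{-1}$; this requires checking that the map $F^v \mapsto \mathrm{GCD}$ of generators is multiplicative on the monoid of finite--type divisorial ideals, which is essentially the content (or a corollary) of Proposition \ref{Theorem L1} combined with the cancellativity of $\G$.

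For the converse, ``$D$ is a $v$--domain $\Rightarrow D$ has a GCD--theory'', I would construct the GCD--monoid directly from the divisorial ideals. Set $\G := \f^v(D) = \{F^v \mid F \in \f(D)\}$ with the operation $F^v \cdot G^v := (FG)^v$; by the discussion in Remark \ref{rk:5}(d) this is a commutative cancellative monoid precisely because $D$ is a $v$--domain (cancellativity is exactly $v$--invertibility of finitely generated ideals). Define the homomorphism $(\text{--}): D^\bullet \to \G$ by $a \mapsto (aD)$; this is a semigroup homomorphism since $(aD)(bD) = (abD)$ (principal ideals need no $v$--closure). I would then verify \textbf{(G1)}: $(aD) \mid (bD)$ in $\G$ means $(bD) = (aD)\cdot H = (aH)^v$ for some $H \in \G$, and since $bD$ is already divisorial this forces $bD \subseteq aD$, i.e.\ $a \mid b$; the converse is immediate. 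Axiom \textbf{(G2)} holds because the assignment $\mathfrak g \mapsto \overline{\mathfrak g}$ is, by construction, just $\mathfrak g = F^v \mapsto F^v$ itself (the elements of $\G$ \emph{are} ideals), so it is trivially injective. The remaining obligation — that $\G$ is a GCD--monoid, i.e.\ every finite set $\{F_1^v,\dots,F_n^v\}$ has a GCD — I would establish by showing $\mathrm{GCD}(F_1^v,\dots,F_n^v) = (F_1 + \cdots + F_n)^v$: indeed $F_i^v \subseteq (F_1+\cdots+F_n)^v$ gives the common-divisor property in the ideal-containment sense, and any common divisor $H$ of all $F_i^v$ in the divisor monoid must contain each $F_i$ hence their sum, hence $(\sum F_i)^v$; here $v$--invertibility is again needed to pass between ``divides'' in $\G$ and ``contains'' among ideals.

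The main obstacle I anticipate is making the dictionary between the order-theoretic notion of divisibility in the monoid $\G$ and the containment order on divisorial ideals completely rigorous in both directions; in an arbitrary monoid ``$\mathfrak a \mid \mathfrak b$'' is an existential statement ($\mathfrak b = \mathfrak a \mathfrak c$ for some $\mathfrak c$), and one must use $v$--invertibility of finitely generated ideals to turn the containment $F^v \supseteq G^v$ into an actual equation $G^v = (F \cdot H)^v$ with $H$ the $v$--ideal $(G^v : F^v)$. Once this equivalence is in hand, both the GCD--monoid axioms and the two properties \textbf{(G1)}, \textbf{(G2)} fall out, and the forward direction is essentially a restatement of Proposition \ref{Theorem L1}. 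I would therefore organize the write-up so that this equivalence (``in $\f^v(D)$, $\mathfrak a \mid \mathfrak b \iff \overline{\mathfrak a} \supseteq \overline{\mathfrak b}$, valid exactly when $D$ is a $v$--domain'') is isolated as the first lemma, after which the theorem follows in a few lines in each direction.
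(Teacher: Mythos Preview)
Your forward direction (GCD--theory $\Rightarrow$ $v$--domain) is fine and matches the paper: both arguments rest on Proposition~\ref{Theorem L1}.

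The converse, however, has a genuine gap. You take $\G = \f^v(D)$ under $v$--multiplication and claim that $\mathrm{GCD}(F^v,G^v) = (F+G)^v$. But ``$\mathfrak a \mid \mathfrak b$'' in this monoid means $\mathfrak b = (\mathfrak a \cdot C)^v$ for some $C \in \f^v(D)$, i.e.\ the cofactor must again be a \emph{$v$--finite} $v$--ideal. For $(a,b)^v$ to divide $aD$ you would need $a(a,b)^{-1} \in \f^v(D)$, hence $(a,b)^{-1}$ $v$--finite; by Remark~\ref{intersection-princ}(a.1) this forces $(a,b)$ to be $t$--invertible, and if it held for all $a,b$ then $D$ would be a P$v$MD. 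So for any $v$--domain that is not a P$v$MD your proposed GCD does not even divide its arguments, and the ``lemma'' you isolate at the end (divides $\Leftrightarrow$ contains in $\f^v(D)$) is exactly the statement that fails. In fact, as noted after Theorem~\ref{Theorem L3}, it is the stronger notion of a \emph{GCD--theory of finite type} --- essentially your construction --- that characterizes P$v$MD's, not $v$--domains.

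The paper circumvents this by building $\G$ from the Kronecker function ring: since $D$ is a $v$--domain, $v$ is e.a.b., $\Kr(D,v)$ is a B\'ezout domain, and one sets $\G := \Kr(D,v)^\bullet/\U(\Kr(D,v))$ with $a \mapsto [a]$. Because $\Kr(D,v)$ is B\'ezout, $\G$ is automatically a GCD--monoid; under the usual identification $f\Kr(D,v) \leftrightarrow \co_D(f)^v$ one gets $\mathrm{GCD}([a_0],\dots,[a_n]) = [a_0+a_1X+\cdots+a_nX^n]$. The point is that $\G$ is strictly larger than $\f^v(D)$ in general: the cofactor witnessing $(a,b)^v \mid aD$ lives in $\Kr(D,v)$ as $a/(a+bX)$, even when the corresponding $v$--ideal $a(a,b)^{-1}$ is not $v$--finite. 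Your construction can be salvaged, but only by enlarging the monoid in exactly this way.
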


 The ``only if part''  is a consequence of Proposition \ref{Theorem L1} (for details see
 \cite[Lucius (1998), Corollary 2.8] {Lu}). 
 
 The proof of the ``if part''  is constructive and provides explicitly the GCD--theory. The GCD--monoid is  constructed via  Kronecker function rings. Recall that, when $v$ is an e.a.b. operation (i.e.,  when $D$ is a $v$--domain (Theorem \ref{lorenzen})), the Kronecker function ring with respect to $v$, $\Kr(D,v)$, is well defined  and  is a B\'ezout domain \cite[Gilmer (1972), Lemma 32.6 and Theorem 32.7]{Gi}.    Let $\K$ be the monoid $\Kr(D, v)^\bullet$, let $\U := \U(\Kr(D, v))$ be the group of invertible elements in $\Kr(D, v)$ and set $\G := \K/\U$. The canonical map:
 $$
  [\mbox{--}]: D^\bullet \rightarrow \G=\frac{\Kr(D, v)^\bullet}{\U},\;\, a \mapsto [a] \ (=\mbox{the equivalence class of $a$ in } \G)
  $$
 defines a GCD--theory for $D$, called the  \emph{Kroneckerian GCD--theory} for the $v$--domain $D$.   In particular, the GCD of  elements in $D$ is realized by the equivalence class of a polynomial; more precisely, under this GCD--theory, given $a_0, a_1, ..., a_n \in D^\bullet$,  GCD$(a_0, a_1, ..., a_n) :=$ GCD$([a_0], [a_1], ..., [a_n])=  [a_0+a_1X+ ...+ a_nX^n]$.

  It is classically known \cite[Borevich-Shafarevich (1966), Chapter 3, Section 5]{BS} that the integral closure of a domain with divisor theory in a finite extension of fields is again a domain with divisor theory. For integral domains with GCD--theory a stronger result holds.

\begin{theorem}
\label{Theorem L3}  Let $D$ be an integrally closed
domain with field of fractions $K$ and let $K \subseteq L$ be an algebraic field extension and let $T$ be the integral closure of $D$
in $L$. Then $T$ is a $v$--domain (i.e., domain with GCD--theory) if and
only if $D$ is   a  $v$--domain (i.e.,  a    domain with GCD--theory).
\end{theorem}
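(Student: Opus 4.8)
The statement is an ``ascent--descent'' result for the $v$--domain property along an integral extension of integrally closed domains with the same quotient field of $D$, the larger one algebraic over $K$. The plan is to treat the two implications separately, using two quite different tools. For the descent direction (``$T$ a $v$--domain $\Rightarrow$ $D$ a $v$--domain''), I would exploit the characterization of $v$--domains by uppers to zero given in Theorem~\ref{umv}: it suffices to show that $D$ is integrally closed (which is a hypothesis) and that every upper to zero $Q$ of $D[X]$ is $v$--invertible. Given such a $Q$, I would pass to $T[X]$, look at the upper to zero $Q'$ of $T[X]$ lying over it (e.g. the one generated by a suitable linear polynomial $\ell = a+bX$ when $Q=Q_{\ell}$, using the linear reduction in Theorem~\ref{umv}(iii$_\ell$)), use that $T$ is a $v$--domain to get $Q'$ is $v$--invertible in $T[X]$, and then descend $v$--invertibility of the content. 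Here the integral-closure hypothesis on $D$ and Querr\'e's identity $Q_\ell = \ell\,\co_D(\ell)^{-1}D[X]$ will let me identify $\co_D(Q_\ell)$ with $\co_D(\ell)=(a,b)$, so $v$--invertibility of $(a,b)$ in $D$ will follow from $v$--invertibility of $(a,b)T$ in $T$ together with a faithful-flatness / contraction argument; then Remark~\ref{rk:5}(b) (two-generated $v$--invertibility suffices) closes the descent.

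**The ascent direction.** For ``$D$ a $v$--domain $\Rightarrow$ $T$ a $v$--domain'', I would use the Kroneckerian GCD--theory machinery just developed. Since $D$ is a $v$--domain, Theorem~\ref{Theorem L2} gives $D$ a GCD--theory, realized by $\G = \Kr(D,v)^\bullet/\U$ and the Kronecker function ring $\Kr(D,v)$, which is a B\'ezout domain with quotient field $K(X)$. The idea is to build a GCD--theory for $T$ by taking the integral closure $\mathcal{B}$ of $\Kr(D,v)$ inside $L(X)$ (or the compositum $\Kr(D,v)\cdot L$) and observing that, because $L/K$ is algebraic and $\Kr(D,v)$ is B\'ezout (in particular Pr\"ufer, hence the ascent of the Pr\"ufer property to integral extensions applies, cf.\ the classical Prüfer-overring results), $\mathcal{B}$ is again a B\'ezout domain whose units and non-zero elements produce a GCD--monoid $\G' := \mathcal{B}^\bullet/\U(\mathcal{B})$. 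Then I would check axioms {\bf(G1)} and {\bf(G2)} for the induced map $T^\bullet \to \G'$, $a\mapsto [a]$: {\bf(G1)} (divisibility detection) follows since $\Kr$-type function rings detect divisibility faithfully and $T = \overline{D}$ in $L$, while {\bf(G2)} follows from Proposition~\ref{Theorem L1} applied in $T$ once one knows $\overline{\mathfrak a} = \overline{\mathfrak a}^{\,v}$. Having exhibited a GCD--theory for $T$, Theorem~\ref{Theorem L2} again gives that $T$ is a $v$--domain.

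**The main obstacle.** The genuinely delicate point is the interaction between the $v$--operation of $D$ and that of $T$ — equivalently, between $\Kr(D,v)$ and the function ring attached to $T$. One must verify that the integral closure of $\Kr(D,v)$ in $L(X)$ really is (a localization of, or equal to) $\Kr(T,v_T)$, i.e.\ that forming the Kronecker function ring commutes with this integral base change; this is where the hypothesis ``$L$ algebraic over $K$'' and ``$D$ integrally closed'' are both essential, and where a careless argument would break. Concretely, the hard lemma will be: for $f\in T[X]$, $\co_T(f)$ is $v_T$--invertible iff $f$ becomes a unit times a polynomial with $v$--invertible $D$--content after clearing a suitable integral relation — a statement one proves using the Dedekind--Mertens lemma (Lemma following Lemma~\ref{gauss}) to control contents of products, plus the known behaviour of integral closure in polynomial rings ($\overline{D}[X] = \overline{D[X]}$, cf.\ \cite[Gilmer (1972)]{Gi}). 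Once this ``content--transfer'' lemma is in place, both the ascent and the descent become routine bookkeeping; so I would budget essentially all of the effort into proving that lemma carefully, and present the two implications as short consequences of it together with Theorem~\ref{umv} and Theorem~\ref{Theorem L2}.
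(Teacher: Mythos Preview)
Your ascent strategy via Kronecker function rings is exactly the route the paper (following Lucius) takes, and you correctly identify the crux: one must show that the integral closure $\mathcal{B}$ of $\Kr(D,v_D)$ in $L(X)$ coincides with $\Kr(T,v_T)$ (this is the paper's fact~(b), \cite[Lucius (1998), Theorem 3.3]{Lu}). However, your stated reason that $\mathcal{B}$ is B\'ezout is wrong: Pr\"ufer does ascend to integral closures in algebraic extensions \cite[Gilmer (1972), Theorem 22.3]{Gi}, but B\'ezout does \emph{not} --- already $\mathbb{Z}\hookrightarrow \mathbb{Z}[\sqrt{-5}]$ is a counterexample. The only way to see that $\mathcal{B}$ is B\'ezout is precisely to prove $\mathcal{B}=\Kr(T,v_T)$ and then invoke the fact that Kronecker function rings are B\'ezout; so your ``obstacle'' paragraph is the actual argument, not a side issue, and the earlier sentence should be deleted.

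Your descent argument has a genuine gap. The extension $D\hookrightarrow T$ is almost never faithfully flat (integral closure in an algebraic extension need not be flat), so ``faithful-flatness / contraction'' is not available, and in particular there is no reason to expect $(D:(a,b))\,T = (T:(a,b)T)$. The tool the paper uses instead is Krull's classical identity
\[
I^{v_D} \;=\; (IT)^{v_T}\cap K \qquad \text{for every nonzero ideal } I \text{ of } D
\]
\cite[Krull (1936), Satz 9]{Krull:1936} (this is the paper's fact~(a)). This is precisely what lets one transport $v$-closures between $D$ and $T$ without any flatness hypothesis, and it is what you should invoke in place of your contraction step. Once you have that identity, the uppers-to-zero detour is unnecessary: the descent of $v$-invertibility for finitely generated ideals can be read off directly.
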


The proof of the previous result is given in \cite[Lucius (1998), Theorem 3.1]{Lu} and it is based on the following facts:

 In the situation of Theorem \ref{Theorem L3},
\begin{itemize}
\item[(a)] \hskip 10pt For each ideal $I$ of $D$, $I^{v_D} = (IT)^{v_T} \cap K $ \cite[Krull (1936), Satz 9,  page 675]{Krull:1936};

\item[(b)]  \hskip 10pt If $D$ is a $v$--domain, then the integral closure of $\Kr(D, v_D)$ in the algebraic field extension $K(X) \subseteq L(X)$ coincides with $\Kr(T, v_T)$   \cite[Lucius (1998), Theorem 3.3]{Lu}. 
\end{itemize}

\begin{remark} \rm
 (a) The notions of   GCD--theory and divisor theory, being more in the setting of monoid theory, have been given a monoid treatment \cite[Halter-Koch (1998), Exercises 18.10, 19.6 and  Chapter 20]{H-K}.

 (b)  Note that a part of  previous Theorem \ref{Theorem L3} appears also as a corollary to \cite[Halter-Koch (2003),Theorem 3.6]{H-K2}. More precisely, let
$ \cl^v(D) \ (:= \bigcup \{F^v : F^v) \mid F \in \f(D)\})$ be the $v$--(integral) closure  of $D$. We have already observed (Theorem \ref{lorenzen} and Remark \ref{pseudo}) that an integral domain $D$ is a $v$--domain if and only if $D = \cl^v(D)$.
Therefore  Theorem \ref{Theorem L3} is an easy consequence of the fact that, in the situation of Theorem \ref{Theorem L3},  it can be shown that $\cl^{v}(T)$ is the integral closure of $\cl^{v}(D)$ in $L$ \cite[Halter-Koch (2003), Theorem 3.6]{H-K2}.

 (c)  In \cite[Lucius (1998), \S 4]{Lu}, the author develops a ``stronger GCD--theory'' in order to characterize P$v$MD's. A \emph{GCD-theory of finite type} is a GCD--theory, $(...)$,  with the property that each divisor $\frak{a}$ in the  GCD--monoid $\G$ is such that $\frak{a} = $ GCD$((a_1), (a_2),..., (a_n))$ for a finite number of nonzero elements $ a_1, a_2,..., a_n \in D$.  For a P$v$MD,
the group of non-zero fractional $t$--finite $t$--ideals provides a GCD--theory of finite type.
(Note that the notion  of a GCD--theory of finite type was introduced in \cite[Aubert (1983)]{Aubert} under the name of ``quasi  divisor theory''.   A thorough presentation of this concept, including several characterizations of P$\ast$MD's, is in \cite[Halter-Koch (1998), Chapter 20]{H-K}.)  

The  analogue  of Theorem \ref{Theorem L2} can be stated as follows: \it Given an integral domain $D$, $D$ is a ring with GCD--theory of finite type if and only if $D$ is a P$v$MD. \rm
  Also in this case, the GCD--theory of finite type and the GCD--monoid are constructed explicitly, via the Kronecker function ring $\Kr(D,v)$ (which coincides in this situation with the Nagata ring $\Na(D, v)$), for the details see \cite[Lucius (1998), Theorem 4.4]{Lu}.  Moreover, in \cite[Lucius (1998), Theorem 4.6]{Lu} there is given another proof   of  Pr\"ufer's theorem \cite[Pr\"ufer (1932), \S 11]{Prufer-32}, analogous to Theorem \ref{Theorem L3}:   \it    Let $D$ be an integrally closed
domain with field of fractions $K$ and let $K \subseteq L$ be an algebraic field extension and let $T$ be the integral closure of $D$
in $L$. Then $T$ is a P$v$MD (i.e., domain with GCD--theory of finite type) if and
only if $D$ is P$v$MD (i.e., domain with GCD--theory of finite type). \rm  Recall that a similar result holds for the special case of Pr\"ufer domains \cite[Gilmer (1972), Theorem 22.3]{Gi}.
\end{remark}

 \newpage
\section{Ideal-theoretic characterizations of $v$--domains}

Important  progress in the knowledge of the ideal theory for $v$--domains  was made   in 1989, after a series of talks given by   the  second named author while  visiting  seve\-ral   US universities.  The results of various discussions of that period are contai\-ned
in the
``A to Z" paper \cite[Anderson-Anderson-Costa-Dobbs-Mott-Zafrullah (1989)]{A-Z}, which contains in particular some new characterizations
of $v$-domains and of completely integrally closed domains. These
characterizations were then expanded into a very long list of   equivalent    statements, providing further characterizations of (several classes of) $v$--domains \cite[Anderson-Mott-Zafrullah (1989)]{AMZ}.

\begin{proposition} \label{inter} Let $D$ be an integral domain. Then, $D$ is a $v$--domain if and only if $D$ is integrally closed and $(F_1 \cap F_2 \cap ... \cap F_n)^v = F_1^v \cap F_2^v \cap ... \cap F_n^v$ for all $F_1, F_2,  ... ,  F_n \in \f(D)$ (i.e., the $v$--operation distributes over finite intersections of finitely generated fractional ideals).
\end{proposition}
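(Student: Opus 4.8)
The plan is to prove both implications, reducing each to a single two‑factor situation. In either direction the inclusion $(F_1\cap\cdots\cap F_n)^v\subseteq F_1^v\cap\cdots\cap F_n^v$ is automatic from monotonicity of the $v$‑operation, so only the reverse inclusion is in play; and since $F_1^v\cap\cdots\cap F_n^v=(F_1^v\cap\cdots\cap F_{n-1}^v)\cap F_n^v$, an obvious induction reduces the distributivity statement to the case $n=2$, i.e. to the identity $F^v\cap G^v=(F\cap G)^v$ for $F,G\in\f(D)$.

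For the forward implication, assume $D$ is a $v$‑domain. Then $D$ is integrally closed (it is regularly integrally closed by Theorem \ref{lorenzen}, hence integrally closed by Remark \ref{rk:5}(a)), and every finitely generated fractional ideal --- in particular $F$, $G$, $F+G$ and $FG$ --- is $v$‑invertible. I would first isolate two lemmas. (L1) In any domain, if $A$ is $v$‑invertible then $(AB)^{-1}=(A^{-1}B^{-1})^v$ for all $B\in\F(D)$; one inclusion comes from $A^{-1}B^{-1}B\subseteq A^{-1}$ together with the fact that $(A^{-1}:B)=(AB)^{-1}$ is a $v$‑ideal, the other from the observation that $y\in(AB)^{-1}$ forces $yA\subseteq B^{-1}$, whence $(yD)^v=(yA^{-1}A)^v\subseteq(A^{-1}B^{-1})^v$. (L2) In a $v$‑domain, $\big((F\cap G)(F+G)\big)^v=(FG)^v$. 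The nontrivial direction of (L2), namely $FG\subseteq\big((F\cap G)(F+G)\big)^v$, is the crux, and I would get it this way: by (L1), $\big((F\cap G)(F+G)\big)^{-1}=\big((F\cap G)^{-1}(F^{-1}\cap G^{-1})\big)^v$, so for $f\in F$, $g\in G$ it suffices to show $fg\,(F\cap G)^{-1}(F^{-1}\cap G^{-1})\subseteq D$; but for $t\in F^{-1}\cap G^{-1}$ one has $ft,gt\in D$, hence $fgt\in fD\cap gD\subseteq F\cap G$, and then $fgt\cdot(F\cap G)^{-1}\subseteq D$. Granting (L1)--(L2), the conclusion is pure $v$‑arithmetic: from (L2) and $v$‑invertibility of $F+G$ and $FG$ one gets $(F\cap G)^v=\big(FG(F+G)^{-1}\big)^v$; applying $(-)^{-1}$ and (L1) twice gives $(F\cap G)^{-1}=\big((FG)^{-1}(F+G)\big)^v$; since $(FG)^{-1}F\subseteq G^{-1}$ and $(FG)^{-1}G\subseteq F^{-1}$ (again (L1) together with $(F^{-1}F)^v=D$), this ideal lies in $(F^{-1}+G^{-1})^v$, so $(F\cap G)^{-1}=(F^{-1}+G^{-1})^v$ (the reverse inclusion being free), and finally $(F\cap G)^v=\big((F^{-1}+G^{-1})^v\big)^{-1}=(F^{-1}+G^{-1})^{-1}=F^v\cap G^v$.

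For the converse, assume $D$ is integrally closed and $v$ distributes over finite intersections of finitely generated fractional ideals. By the characterization recalled in Remark \ref{rk:5}(b) (every nonzero two‑generated fractional ideal is $v$‑invertible) it is enough to show that every two‑generated ideal $(a,b)$, $a,b\ne 0$, is $v$‑invertible; by Remark \ref{intersection-princ}(a.2) this amounts to $\big((a,b)^{-1}:(a,b)^{-1}\big)=D$, which via $(a,b)^{-1}=\tfrac1{ab}(aD\cap bD)$ (Remark \ref{intersection-princ}(b)) I would instead establish by computing $\big((a,b)(a,b)^{-1}\big)^v$. The plan is: expand $(a,b)(a,b)^{-1}=\tfrac1{ab}\big[(a^2D\cap abD)+(abD\cap b^2D)\big]$; invert, using $(X+Y)^{-1}=X^{-1}\cap Y^{-1}$ and $(aD\cap bD)^{-1}=\tfrac1{ab}(a,b)^v$, to obtain $\big[(a^2D\cap abD)+(abD\cap b^2D)\big]^{-1}=\tfrac1{a^2b^2}\big(a(a,b)^v\cap b(a,b)^v\big)$; and then prove the key equality $a(a,b)^v\cap b(a,b)^v=abD$. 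For the latter I would note first that $a(a,b)\cap b(a,b)=abD$ --- indeed $y\in a(a,b)\cap b(a,b)$ forces $y/(ab)$ into $\big((a,b):(a,b)\big)=D$ by integral closure --- and then invoke the distributivity hypothesis for the finitely generated ideals $(a^2,ab)=a(a,b)$ and $(ab,b^2)=b(a,b)$: $a(a,b)^v\cap b(a,b)^v=\big((a^2,ab)\cap(ab,b^2)\big)^v=\big(a(a,b)\cap b(a,b)\big)^v=abD$. Substituting back yields $\big[(a^2D\cap abD)+(abD\cap b^2D)\big]^{-1}=\tfrac1{ab}D$, hence $\big[(a^2D\cap abD)+(abD\cap b^2D)\big]^v=abD$ and therefore $\big((a,b)(a,b)^{-1}\big)^v=D$.

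The step I expect to be the real obstacle is (L2) in the forward direction --- equivalently, passing from $F^v\cap G^v$ down into $(F\cap G)^v$. One inclusion there is purely formal, but the other, $FG\subseteq\big((F\cap G)(F+G)\big)^v$, must be produced \emph{without} quietly using the distributivity one is trying to establish; the elementary observation that $fgt\in F\cap G$ whenever $f\in F$, $g\in G$, $t\in F^{-1}\cap G^{-1}$ is exactly what breaks this potential circularity. A secondary nuisance to keep in mind throughout is that $F\cap G$, and also $(a,b)^v$, need not be finitely generated, so neither the e.a.b. property nor the distributivity hypothesis may be applied to them directly --- every appeal to distributivity has to be arranged so that it lands on honestly finitely generated ideals such as $(a^2,ab)$ and $(ab,b^2)$.
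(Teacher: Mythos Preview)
The paper does not actually prove this proposition; it simply records that the ``if'' part is \cite[Theorem 7]{A-Z} and that the converse was settled in \cite[Theorem 2]{MO}. So there is no in-paper argument to compare against, and your write-up is in fact more detailed than what the paper offers.

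Your converse (integrally closed $+$ distributivity $\Rightarrow$ $v$--domain) is correct and nicely arranged: the reduction to two-generated ideals via Remark~\ref{rk:5}(b), the computation $a(a,b)\cap b(a,b)=abD$ from $((a,b):(a,b))=D$, and the single appeal to the hypothesis on the finitely generated ideals $(a^2,ab)$ and $(ab,b^2)$ all go through. Your forward direction for $n=2$ is also correct; your (L2) is exactly condition (x$_{\f}$) of Theorem~\ref{*prufer-2}, and you recover $(F\cap G)^v=F^v\cap G^v$ from it cleanly.

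The one genuine gap is the ``obvious induction'' reducing general $n$ to $n=2$. Your $n=2$ statement is only for $F,G\in\f(D)$, but in the inductive step the role of $F$ is played by $F_1\cap\cdots\cap F_{n-1}$, which in a $v$--domain (that is not a P$v$MD) need \emph{not} lie in $\f(D)$ --- the very nuisance you flag for the converse direction bites here too. Your proof of (L2), and the subsequent derivation of $(F\cap G)^v=F^v\cap G^v$, actually uses only that $F$, $G$, $F+G$, $FG$ are $v$--invertible, so the argument upgrades verbatim to $v$--invertible $A,B$ provided you know that $A+B$ is again $v$--invertible. That last fact is condition (xiii) of Theorem~\ref{*prufer-2} and is not automatic; you would need either to prove it (e.g.\ along the lines of (x$_{\F}$)$\Leftrightarrow$(i) in \cite{AAFZ}) or to give a direct $n$--ideal version of (L2). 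Once that is in hand your induction goes through, since (L2) also shows $F_1\cap\cdots\cap F_{n-1}$ is $v$--invertible.
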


The  ``if part''  is contained in the ``A to Z" paper (Theorem 7 of that paper,   where the converse was left open).   
The converse of this result was proved   a few  years later   in \cite[Matsuda-Okabe (1993), Theorem 2]{MO}.

Note that, even  for a Noetherian 1-dimensional domain, the $v$--operation may not distribute over finite intersections of (finitely generated) fractional ideals.  For instance,  here is an example due to W. Heinzer cited in \cite[D.D. Anderson-Clarke (2006), Example 1.2]{AC2},  
 let $k$ be a field, $X$ an indeterminate over $k$ and set $D:=k[\![X^3, X^4, X^5]\!]$, $F := (X^3, X^4)$ and $G:=(X^3, X^5)$. Clearly, $D$ is a non-integrally closed 1-dimensional local Noetherian domain with maximal ideal $M:= (X^3, X^4, X^5) = F+G$. It is easy to see that    $F^v =G^v = M$,    and so $F\cap G =(X^3) = (F\cap G)^v \subsetneq F^v \cap G^v = M$.

Recently, D.D. Anderson and Clarke have investigated the star operations that distribute over finite   intersections. In particular,   in \cite[D.D. Anderson-Clarke (2005), Theorem 2.8]{AC1},  they proved  a star operation version of the ``only if part'' of Proposition \ref{inter} and, moreover, in \cite[D.D. Anderson-Clarke (2005), Proposition 2.7]{AC1} and \cite[D.D. Anderson-Clarke (2006), Lemma 3.1 and Theorem 3.2]{AC2} they established several other general equivalences that, particularized in  the $v$--operation case,  are summarized in the following:

\begin{proposition} \label{bis-inter} Let $D$ be an integral domain.
\begin{itemize}
\item[\rm (a)] \hskip 10pt   $(F_1 \cap F_2 \cap ... \cap F_n)^v = F_1^v \cap F_2^v \cap ... \cap F_n^v$ for all $F_1, F_2,  ... ,  F_n \in \f(D)$  if and only if $(F:_D G)^v = (F^v :_D G^v)$ for all $F,G \in \f(D)$.
\item[\rm (b)]  \hskip 10pt The following are equivalent.
{
\begin{itemize}
\item[\rm (i)] \hskip 10pt $D$ is a $v$--domain.
\item[\rm (ii)] \hskip 10pt $D$ is integrally closed and  $(F:_D G)^v = (F^v :_D G^v)$ for all $F,G \in \f(D)$

\item[\rm (iii)] \hskip 10pt $D$ is integrally closed and $((a,b) \cap (c,d))^v = (a,b)^v \cap (c,d)^v$ for all nonzero $a ,b, c, d \in D$.
\item[\rm (iv)] \hskip 10pt $D$ is integrally closed and $((a,b) \cap (c))^v = (a,b)^v \cap (c)$ for all nonzero $a ,b, c \in D$.
\item[\rm (v)] \hskip 10pt $D$ is integrally closed and $((a,b):_D (c))^v = ((a,b)^v :_D (c))$ for all nonzero $a ,b, c \in D$.
\end{itemize}}
\end{itemize}
\end{proposition}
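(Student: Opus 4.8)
The plan is to organize the argument around a single pivotal equivalence --- the identity $(F:_D G)^v = (F^v :_D G^v)$ for all $F, G \in \f(D)$ --- and then show that all the listed conditions collapse onto it, modulo integral closure. First I would establish part~(a). The forward direction: assuming the $v$-operation distributes over finite intersections of finitely generated ideals, write $G = (g_1, \dots, g_m)$ and observe that $(F :_D G) = \bigcap_i (F :_D g_i D) = \bigcap_i \tfrac{1}{g_i} F \cap D$; applying the distributivity hypothesis to this finite intersection, together with the elementary fact $(\tfrac{1}{g}F)^v = \tfrac{1}{g}F^v$ and $(F :_D g D)^v = (F^v :_D gD)$ (which is the single-generator case and follows directly from $(\ast_1)$), one gets $(F:_D G)^v = \bigcap_i (F^v :_D g_i D) = (F^v :_D G^v)$. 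The reverse direction: given the conductor identity, note that $F_1 \cap F_2 = $ (after clearing denominators so all $F_i \subseteq D$) can be realized as a conductor, since for fractional ideals one has $aD \cap bD$-type manipulations; more systematically, $(F_1 \cap F_2)^v$ is handled by writing $F_1 \cap F_2 = (F_1 :_D (F_1 : F_2))$-style expressions, or more simply by induction on $n$ reducing to the two-ideal case and using that $F_1 \cap F_2 = d F_1 \cap d F_2$ for suitable $d$ so that one factor is integral, then invoking the conductor formula. The case $n \geq 3$ follows by an easy induction once $n = 2$ is done.

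Next I would prove the equivalences in part~(b). The implications $(\mathrm{ii}) \Rightarrow (\mathrm{iii}) \Rightarrow (\mathrm{iv})$ and $(\mathrm{iv}) \Leftrightarrow (\mathrm{v})$ are immediate specializations (using part~(a) in the two-generated setting, noting that a principal ideal $(c)$ is its own $v$-closure). The crucial implications are $(\mathrm{i}) \Rightarrow (\mathrm{ii})$ and $(\mathrm{iv}) \Rightarrow (\mathrm{i})$. For $(\mathrm{i}) \Rightarrow (\mathrm{ii})$: a $v$-domain is integrally closed by $\Rightarrow_{11}$ in the excerpt (equivalently, Theorem~\ref{lorenzen}), and the distributivity of $v$ over finite intersections of finitely generated ideals is exactly Proposition~\ref{inter}; then part~(a) gives the conductor identity. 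For $(\mathrm{iv}) \Rightarrow (\mathrm{i})$: this is the genuine content. Here I would use Remark~\ref{rk:5}(b)/(c): $D$ is a $v$-domain if and only if every nonzero two-generated fractional ideal is $v$-invertible. So it suffices to show $(a,b)$ is $v$-invertible for all nonzero $a, b \in D$. By Remark~\ref{intersection-princ}(b), $(a,b)$ is $v$-invertible iff $aD \cap bD$ is $v$-invertible, and by Remark~\ref{intersection-princ}(a.2) the latter holds iff $((aD \cap bD)^{-1} : (aD \cap bD)^{-1}) = D$. The strategy is to translate the hypothesis $((a,b) \cap (c))^v = (a,b)^v \cap (c)$ into control over $((a,b)^{-1} : (a,b)^{-1})$, using that $(a,b)^{-1} = \tfrac{1}{ab}(aD \cap bD)$ (the displayed formula in Remark~\ref{intersection-princ}(b)), so that integral closure ($(D:D)=D$, and more relevantly the characterization via $(F:F)=D$ for $F \in \f(D)$) combined with the special intersection identity forces $(A^{-1} : A^{-1}) = D$ for $A = aD \cap bD$.

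The main obstacle I anticipate is precisely this last step, $(\mathrm{iv}) \Rightarrow (\mathrm{i})$: bridging from a distributivity identity involving a principal ideal $(c)$ to $v$-invertibility of arbitrary two-generated ideals. The trick, which is the heart of \cite[D.D. Anderson-Clarke (2006)]{AC2}, is to choose $c$ cleverly --- take $c = ab$ or a suitable element clearing the relevant denominators --- so that $((a,b) \cap (c))^v = (a,b)^v \cap (c)$ becomes a statement about $((a,b)(a,b)^{-1})^v$ after multiplying through, and then combine with integral closure (applied to $F = (a,b)$, giving $((a,b):(a,b)) = D$) to deduce $((a,b)(a,b)^{-1})^v = D$, i.e. $v$-invertibility. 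One must be careful that condition~(iv) is stated with $(c)$ a \emph{principal} ideal on both sides ($(c)^v = (c)$), which is what makes the reduction to a conductor/colon computation go through cleanly. Once $v$-invertibility of all two-generated ideals is secured, Remark~\ref{rk:5}(b) closes the loop back to $(\mathrm{i})$. The remaining trivial implications ($(\mathrm{ii}) \Rightarrow (\mathrm{iii})$, etc.) I would dispatch in a sentence each, citing part~(a).
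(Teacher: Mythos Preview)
The paper gives no proof of this proposition at all: it is a survey, and the result is simply attributed to \cite[D.D. Anderson--Clarke (2005), Proposition 2.7]{AC1} and \cite[D.D. Anderson--Clarke (2006), Lemma 3.1 and Theorem 3.2]{AC2} (with the implication (i)$\Rightarrow$(distributivity) going back to Proposition~\ref{inter} and \cite{MO}). There is therefore nothing in the paper itself to compare your proposal against.

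On the merits of your sketch: the forward direction of (a) is correct and cleanly done, and your identification of (iv)$\Rightarrow$(i) as the load-bearing implication is exactly right. But two steps are not actually carried out. First, the reverse direction of (a) is only gestured at: you invoke ``$(F_1:_D(F_1:F_2))$-style expressions'' and a scaling trick without ever producing, for arbitrary $F_1,F_2\in\f(D)$, a conductor $(F:_DG)$ with $F,G\in\f(D)$ whose value is $F_1\cap F_2$; the principal case $aD\cap bD=((ab):_D(a,b))$ is easy, but the general finitely generated case needs a real argument. Second, and more seriously, your proposed choice $c=ab$ in (iv)$\Rightarrow$(i) yields nothing: since $(ab)\subseteq(a,b)$, the hypothesis $((a,b)\cap(ab))^v=(a,b)^v\cap(ab)$ collapses to the triviality $(ab)=(ab)$. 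Getting from (iv) (equivalently (v)) plus integral closure to $((a,b)^{-1}:(a,b)^{-1})=D$ requires a genuinely different manipulation than the one you outline; the strategic shape is right, but the key computation is missing.
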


Note that  P$v$MD's can be characterized by   ``$t$--versions''  of the statements of  Proposition \ref{bis-inter} (b) \cite[D.D. Anderson-Clarke (2006), Theorem 3.3]{AC2}.  Moreover, in \cite[D.D. Anderson-Clarke (2006)]{AC2},   the authors also asked several questions related to distribution of the $v$--operation  over
intersections. \, One of these
questions   \cite[D.D. Anderson-Clarke (2005), Question 3.2(2)]{AC1} can be stated as:
 \sl Is it true that, if $D$ is a $v$-domain, then $(A\cap B)^{v}=A^{v}\cap B^{v}$ for all $A,B\in \F(D)$? \rm 
 
In
\cite[Mimouni (2007), Example 3.4]{M}, the author has recently answered in the negative, constructing a Pr\"ufer domain with two ideals $A, B \in \F(D)$ such
that $(A\cap B)^{v}\neq A^{v}\cap B^{v}$.  

\medskip

In a very recent  paper \cite[Anderson-Anderson-Fontana-Zafrullah (2008)]{AAFZ}, the authors  classify the integral domains that come
under the umbrella of $v$--domains, called there \emph{$\ast$--Pr\"ufer domains} for a given star operation $\ast$ (i.e., integral domains such that every nonzero finitely generated fractional ideal is $\ast$--invertible). Since $v$--Pr\"ufer domains coincide with $v$--domains,  this paper provides also direct and general proofs of several relevant quotient-based characterizations of $v$--domains given in \cite[Anderson-Mott-Zafrullah (1989), Theorem 4.1]{AMZ}.  We collect in the following theorem several of these ideal-theoretic characterizations in case of $v$--domains. For  the general statements in the star setting and for the proof  the reader can consult \cite[Anderson-Anderson-Fontana-Zafrullah (2008), Theorems 2.2 and 2.8]{AAFZ}.

\begin{theorem} \label{*prufer-2} Given an integral domain $D$,   the   following properties are  equivalent.  
\begin{itemize}
\item[\rm (i)] \hskip 15pt $D$ is  a  $v$--domain.   
\item[\rm (ii)] \hskip 15pt For  all  $A\in \F(D) $ and $F\in \f(D)$, $A\subseteq F^{v}$ implies $A^{v
}=(BF)^{v}$ for some $B\in \F(D)$.

\item[\rm (iii)] \hskip 15pt $(A:F)^{v }=(A^{v}:F) = (AF^{-1})^{v}$  for all  $A\in \F(D) $ and $F\in \f(D)$.

\item[\rm (iv)]  \hskip 15pt$(A:F^{-1})^{v }= (A^{v }:F^{-1}) = (AF)^{v }$ for all  $A\in \F(D) $ and $F\in \f(D)$.

\item[\rm (v)]  \hskip 15pt $(F:A)^{v}=(F^{v }:A)=(FA^{-1})^{v }$ for all  $A\in \F(D) $ and $F\in \f(D)$.


\item[\rm (vi)]  \hskip 15pt $(F^{v}:A^{-1})=(FA^{v})^{v }$ for all  $A\in \F(D) $ and $F\in \f(D)$.

\item[\rm (vii)] \hskip 15pt $((A+B):F)^{v}=((A:F)+(B:F))^{v }$  for all $A,B\in \F(D)$  and   $F\in \f(D)$.

 \item[\rm (viii)] \hskip 15pt $(A :(F\cap G))^v = ((A:F) + (A:G))^v$  for   all  $A\in \F(D) $ and $F, G\in \f^v(D) \ (:= \{ H \in \f(D) \mid H = H^v \})$.  

 \item[\rm (ix)]  \hskip 15pt $(((a):_{D}(b))+ ((b):_{D}(a)))^{v}=D$   for all  nonzero  $
a,b\in D$.

\item[\rm (x$_{\f}$)] \hskip 15pt $((F\cap
G)(F+G))^{v }=(FG)^{v}$ for all $F,G \in \f(D)$.

\item[\rm (x$_{\F}$)]  \hskip 15pt $((A\cap
B)(A+B))^{v }=(AB)^{v }$ for all $A,B\in \F(D)$.

\item[\rm (xi$_{\f}$)] \hskip 15pt $(F(G^{v }\cap
H^{v }))^{v }=(FG)^{v}\cap (FH)^{v}$ for all $F, G, H\in \f(D)$.

\item[\rm (xi$_{\!\f\!\F}$)] \hskip 15pt $(F(A^{v }\cap
B^{v}))^{v }=(FA)^{v }\cap (FB)^{v }$ for all $F\in \f(D)$ and $
A,B\in \F(D)$.

 \item[\rm (xii)] \hskip 15pt  If  $A, B \in \F(D)$ are $v$--invertible, then $A \cap B$ and $A+B$ are $v$--invertible.  

 \item[\rm (xiii)] \hskip 15pt If  $A, B \in \F(D)$ are $v$--invertible, then $A+B$ is $v$--invertible.  
\end{itemize}

\end{theorem}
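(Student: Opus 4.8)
\medskip

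\noindent\textit{Proof strategy.} The plan is to prove (i)$\Rightarrow$(ii)--(xiii) in one sweep and then to close the loop by showing that each of the remaining twelve statements forces every two--generated ideal of $D$ to be $v$--invertible; by Remark~\ref{rk:5}~(b) this last property is exactly condition (i). Three elementary tools will be used repeatedly: the standard $v$--arithmetic identities $(XY)^v=(X^vY^v)^v$, $(D:C)=(D:C^v)=C^{-1}$, $(XY)^{-1}=(Y^{-1}:X)$ and $C^v=\bigcap\{xD\mid x\in K\setminus\{0\},\ C\subseteq xD\}$ \cite[Gilmer (1972), Section 34]{Gi}; the fact that $(A:B)$ is divisorial whenever $A$ is, with then $(A:B)=(A:B^v)$; and Proposition~\ref{inter}, i.e. in a $v$--domain $v$ distributes over finite intersections of finitely generated fractional ideals.

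\noindent The engine of the forward direction is a cancellation principle: \emph{if $D$ is a $v$--domain, $F\in\f(D)$ and $G,H\in\F(D)$, then $(FG)^v\subseteq(FH)^v$ if and only if $G^v\subseteq H^v$.} The nontrivial inclusion follows by multiplying by $F^{-1}$ and applying $v$, since $F$ is $v$--invertible so $(F^{-1}F)^v=D$; this is just the ``arithmetisch brauchbar'' property of Theorem~\ref{lorenzen}, extended from $\f(D)$ to $\F(D)$. Granting this, each of (ii)--(xiii) is a short computation. For (ii) one takes $B:=AF^{-1}$, so $(BF)^v=(A(F^{-1}F)^v)^v=A^v$. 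For (iii)--(vi) one sandwiches, e.g. $AF^{-1}\subseteq(A:F)\subseteq(A^v:F)$, observes $((A:F)F)^v\subseteq A^v=(F(AF^{-1}))^v$, and cancels $F$. Conditions (vii), (viii) go the same way (in (viii) one uses Proposition~\ref{inter} so that $F\cap G$ behaves like a finitely generated $v$--ideal). For (xi$_{\f}$) and (xi$_{\!\f\!\F}$) one multiplies the right--hand intersection by $F^{-1}$, cancels, and multiplies back by $F$, using that a finite intersection of divisorial ideals is divisorial. For (xii) and (xiii) one checks that $v$--invertibility of $A,B$ is inherited by $AB$, by $A+B$ and by $A\cap B$ --- the last via $(A\cap B)^v=(AB(A+B)^{-1})^v$, valid because $A+B$ is then $v$--invertible.

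\noindent The only forward implication that is not pure cancellation is (x$_{\f}$)/(x$_{\F}$): here ``$\subseteq$'' is immediate from $(A\cap B)(A+B)\subseteq AB$, while for ``$\supseteq$'' I would use the $\bigcap xD$ description of $v$ to reduce to the claim ``if $(A\cap B)(A+B)\subseteq D$ then $AB\subseteq D$''; for $a\in A$, $b\in B$ the hypothesis gives $(aD\cap bD)(a,b)\subseteq D$, and since $aD\cap bD=ab\,(a,b)^{-1}$ and $(a,b)$ is $v$--invertible in the $v$--domain $D$, applying $v$ forces $ab\in D$. For the reverse implications I would feed each statement principal ideals. Using the identities $(a,b)^{-1}=\tfrac{1}{a}D\cap\tfrac{1}{b}D$, $aD\cap bD=ab\,(a,b)^{-1}$ and $(a,b)(a,b)^{-1}=\big((a):_D(b)\big)+\big((b):_D(a)\big)$ (valid in every domain), condition (ix) literally reads ``$(a,b)$ is $v$--invertible for all nonzero $a,b$'', so (ix)$\Leftrightarrow$(i). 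Each of (x$_{\f}$), (x$_{\F}$), (xi$_{\f}$), (xi$_{\!\f\!\F}$), (xii), (xiii), evaluated at $A=aD$, $B=bD$ (and $F,G$ principal where needed), collapses via these identities to $\big((a,b)(a,b)^{-1}\big)^v=D$. For the quotient--type conditions (ii)--(viii) one chooses the non--vacuous specialization: e.g. (vii) at $A=aD$, $B=bD$, $F=(a,b)$ gives $\big((a,b):(a,b)\big)^v=\big((a,b)(a,b)^{-1}\big)^v$, and the squeeze $D\subseteq\big((a,b):(a,b)\big)^v$ and $\big((a,b)(a,b)^{-1}\big)^v\subseteq D$ forces both to equal $D$; while (ii) at $A=aD$, $F=(a,b)$ gives $aD=(B(a,b))^v$ for some $B\in\F(D)$, whence $\big((a^{-1}B)(a,b)\big)^v=D$ and again $(a,b)$ is $v$--invertible.

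\noindent I expect the main obstacle to be exactly this reverse bookkeeping: on the naive substitutions most of the identities (ii)--(viii) are vacuously true, so the real work is to isolate, for each, a specialization that is not vacuous, the recurring device being the squeeze $D\subseteq\big((a,b):(a,b)\big)^v=\big((a,b)(a,b)^{-1}\big)^v\subseteq D$ together with the two elementary computations of $(a,b)^{-1}$ and of $aD\cap bD$. A secondary delicate point is the full fractional--ideal version (x$_{\F}$), whose ``outer'' ideal $A+B$ is in general neither finitely generated nor $v$--invertible, so cancellation does not apply and one must argue through the $\bigcap xD$ description and the two--generated case. For the complete statements in the star--operation setting and the detailed verifications one may consult \cite[Anderson-Anderson-Fontana-Zafrullah (2008), Theorems 2.2 and 2.8]{AAFZ}.
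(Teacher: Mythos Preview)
The paper itself gives no proof of this theorem: it simply refers the reader to \cite[Anderson--Anderson--Fontana--Zafrullah (2008), Theorems 2.2 and 2.8]{AAFZ} for both the star--operation statements and the verifications. Your proposal ends with the same citation, so at the level of what the survey actually does, you are already aligned with it.

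What you add is a genuine proof sketch, and it is sound. The forward direction via the a.b.\ cancellation $(FG)^v\subseteq(FH)^v\Rightarrow G^v\subseteq H^v$ (for $F\in\f(D)$, $G,H\in\F(D)$) is exactly the right engine, and your treatment of (x$_{\F}$) through the $\bigcap xD$ description, reducing to $(aD\cap bD)(a,b)\subseteq xD\Rightarrow ab\in xD$ via $aD\cap bD=ab\,(a,b)^{-1}$ and $v$--invertibility of $(a,b)$, is correct. For the converse directions your recurring ``squeeze'' $D\subseteq\big((a,b):(a,b)\big)^v=\big((a,b)(a,b)^{-1}\big)^v\subseteq D$ is the right idea and does the job; the specializations you indicate (e.g.\ $A=F=(a,b)$ in (iii), (v); $A=(a,b)^{-1}$, $F=(a,b)$ in (iv), (vi); $A=aD$, $B=bD$, $F=(a,b)$ in (vii); $F=aD$, $G=bD$, $A=aD\cap bD$ in (viii); $A=aD$, $B=bD$ in (x), (xii), (xiii)) all collapse to $((a,b)(a,b)^{-1})^v=D$ as you claim. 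One small imprecision: for (xi$_{\f}$)/(xi$_{\!\f\!\F}$) the useful specialization is $F=(a,b)$ with $G=a^{-1}D$, $H=b^{-1}D$ (so that $G^v\cap H^v=(a,b)^{-1}$ and the right side visibly contains $D$), not ``$F,G$ principal'' as you wrote --- with $F$ principal the identity is vacuous.
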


Note that some of the previous characterizations are remarkable for various reasons.
For instance, (xiii) is interesting in that while an invertible  ideal  (respectively, $t$--invertible $t $--ideal) is finitely generated (respectively, $t$--finite) a $v$--invertible 
$v$--ideal may not be $v$--finite.  Condition (x$_{\F}$) in the star setting gives
$((A\cap B)(A+B))^{\ast }=(AB)^{\ast }$ for all $A,B\in
\F(D)$  and  for $\ast =d$ (respectively, $\ast =t$),   it is a (known) characterization of Pr\"ufer domains (respectively, P$v$MD's), but for $\ast =v$ is a brand-new characterization of $v$--domains.   
More generally, note   that, replacing in each of the statements of the previous theorem $v$ with the identity star operation $d$ (respectively, with $t$), we (re)obtain several characterizations of Pr\"ufer domains (respectively, P$v$MD's).

Franz Halter-Koch has recently shown a great deal of interest in the   paper  \cite[Anderson-Anderson-Fontana-Zafrullah (2008)]{AAFZ} and,   at the Fez Conference in June 2008, he has presented further  systematic work in the language of monoids, containing in particular  the above characterizations \cite[Halter-Koch (2009)]{HK-Fez-08}.

\medskip
{\bf Acknowledgment.} The referee's warm encouragement mixed with incisive criticism has helped improve the presentation of this article a great deal. We gratefully acknowledge the referee's contribution.

\end{section}


\printindex
\end{document}